\crefname{equation}{}{}
\crefname{lemma}{Lemma}{Lemmas}
\crefname{page}{p.}{pp.}
\numberwithin{equation}{section}
\theoremstyle{plain}
\newtheorem{theorem}{Theorem}[section]
\newtheorem{proposition}{Proposition}[section]
\newtheorem{lemma}{Lemma}[section]
\theoremstyle{definition}
\newtheorem{definition}{Definition}[section]
\newtheorem{remark}{Remark}[section]
\def\now{%
\minute=\time%
\hour=\time \divide \hour by 60%
\hourMins=\hour \multiply\hourMins by 60%
\advance\minute by -\hourMins%
\zeroPadTwo{\the\hour}:\zeroPadTwo{\the\minute}%
}
\def\zeroPadTwo#1{\ifnum #1<10 0\fi#1}
\renewcommand{\cite}{\citet}
\def\^#1{\ifmmode {\mathaccent"705E #1} \else {\accent94 #1} \fi}
\def\~#1{\ifmmode {\mathaccent"707E #1} \else {\accent"7E #1} \fi}
\def\*#1{#1^\ast}
\edef\-#1{\noexpand\ifmmode {\noexpand\bar{#1}} \noexpand\else \-#1\noexpand\fi}
\def\>#1{\vec{#1}}
\def\.#1{\dot{#1}}
\def\atop{\@@atop}
\def\*#1{\mathscr{#1}}
\renewcommand{\leq}{\leqslant}
\renewcommand{\geq}{\geqslant}
\newcommand{\eps}{\varepsilon}
\renewcommand{\eps}{\varepsilon}
\newcommand{\diag}{{\mathop{\mathrm{diag}}}}
\newcommand{\IE}{\mathbbm{E}}
\newcommand{\Var}{\mathop{\mathrm{Var}}\nolimits}
\newcommand{\Cov}{\mathop{\mathrm{Cov}}}
\newcommand{\tr}{\mathop{\mathrm{tr}}}
\def\be#1{\begin{equation*}#1\end{equation*}}
\def\ben#1{\begin{equation}#1\end{equation}}
\def\besn#1{\begin{equation}\begin{split}#1\end{split}\end{equation}}
\def\ba#1{\begin{align*}#1\end{align*}}
\def\norm#1{\Vert#1\Vert}
\def\lnorm#1{\left\Vert#1\right\Vert}
\def\mid{\vert}
\def\beqn#1\eeqn{\begin{align}#1\end{align}}
\def\beq#1\eeq{\begin{align*}#1\end{align*}}
\def\E{{\IE}}
\newcommand{\ul}[1]{\underline{#1}}
\newcommand{\ol}[1]{\overline{#1}}
\newcommand{\mcl}[1]{\mathcal{#1}}
\newcommand{\bs}[1]{\boldsymbol{#1}}
\DeclareMathOperator{\Hess}{Hess}
\DeclareMathOperator{\domain}{Dom}
\DeclareMathOperator{\LC}{LC}
\renewcommand\section{\@startsection {section}{1}{\z@}%
{-3.5ex \@plus -1ex \@minus -.2ex}%
{1.3ex \@plus.2ex}%
{\center\small\sc\mathversion{bold}}}
\def\subsection#1{\@startsection {subsection}{2}{0pt}%
{-3.5ex \@plus -1ex \@minus -.2ex}%
{1ex \@plus.2ex}%
{\bf\mathversion{bold}}{#1}}
\def\subsubsection#1{\@startsection{subsubsection}{3}{0pt}%
{\medskipamount}%
{-10pt}%
{\normalsize\itshape}{\kern-2.2ex. #1.}}
\def\blfootnote{\xdef\@thefnmark{}\@footnotetext}
\begin{document}

\title{
Sharp High-dimensional Central Limit Theorems for Log-concave Distributions
}
\author{Xiao Fang and Yuta Koike}
\date{\it The Chinese University of Hong Kong and The University of Tokyo} 
\maketitle

\noindent{\bf Abstract:} Let $X_1,\dots,X_n$ be i.i.d.~log-concave random vectors in $\mathbb R^d$ with mean 0 and covariance matrix $\Sigma$. 
We study the problem of quantifying the normal approximation error for $W=n^{-1/2}\sum_{i=1}^nX_i$ with explicit dependence on the dimension $d$. 
Specifically, without any restriction on $\Sigma$, we show that the approximation error over rectangles in $\mathbb R^d$ is bounded by $C(\log^{13}(dn)/n)^{1/2}$ for some universal constant $C$. Moreover, if the Kannan--Lov\'asz--Simonovits (KLS) spectral gap conjecture is true, this bound can be improved to $C(\log^{3}(dn)/n)^{1/2}$. This improved bound is optimal in terms of both $n$ and $d$ in the regime $\log n=O(\log d)$. 
We also give $p$-Wasserstein bounds with all $p\geq2$ and a Cram\'er type moderate deviation result for this normal approximation error, and they are all optimal under the KLS conjecture. 
To prove these bounds, we develop a new Gaussian coupling inequality that gives almost dimension-free bounds for projected versions of $p$-Wasserstein distance for every $p\geq2$. We prove this coupling inequality by combining Stein's method and Eldan's stochastic localization procedure.  

\medskip

\noindent{\bf AMS 2020 subject classification: }  60F05, 60J60, 62E17

\noindent{\bf Keywords and phrases:} Coupling, Cram\'er type moderate deviations, F\"ollmer process, $p$-Wasserstein distance, Stein's method, stochastic localization. 

\section{Introduction}

Let $X_1,\dots,X_n$ be i.i.d.~random vectors in $\mathbb R^d$ with mean 0 and covariance matrix $\Sigma=(\Sigma_{jk})_{1\leq j,k\leq d}$. Set $W=n^{-1/2}\sum_{i=1}^nX_i$. The classical central limit theorem (CLT) states that $W$ converges in law to $N(0,\Sigma)$ as $n\to\infty$. 
This paper aims to quantify the convergence rate of this normal approximation with explicit dependence on the dimension $d$. 
It is known that this dependence is crucially determined by how to measure the distance between the law of $W$ and $N(0,\Sigma)$. In this paper, we primarily focus on the uniform distance over rectangles in $\mathbb R^d$. That is,
\[
\rho(W,Z)=\sup_{A\in\mcl R}|P(W\in A)-P(Z\in A)|,
\]
where $Z\sim N(0,\Sigma)$ and $\mathcal{R}:=\{\prod_{j=1}^d [a_j, b_j]: -\infty< a_j< b_j<\infty\}$ is the set of rectangles in $\mathbb R^d$. 
The recent seminal work of \cite{CCK13,CCK17} has shown that, under mild regularity assumptions, one can get a non-trivial bound for $\rho(W,Z)$ even when the dimension $d$ is much larger than the sample size $n$. 
When we allow $\Sigma$ to be degenerate, the currently best known general bound for $\rho(W,Z)$ is as follows: Suppose $\ul\sigma^2=\min_{1\leq j\leq d}\Sigma_{jj}>0$. Suppose also that there exists a constant $B>0$ such that $\E\exp(|X_{1j}|/B)\leq2$ and $\E X_{1j}^4\leq B^2$ for all $j=1,\dots,d$, where $X_{1j}$ is the $j$-th component of $X_1$. Then, according to Theorem 2.1 in \cite{CCKK22}, we have
\ben{\label{eq:cckk}
\rho(W,Z)\leq c\left(\frac{B^2\log^5(dn)}{n}\right)^{1/4},
}
where $c$ is a constant depending only on $\ul\sigma^2$. The bound \eqref{eq:cckk} gives a meaningful estimate for $\rho(W,Z)$ even when $d$ is exponentially larger than $n$, but the dependence on $n$ does not match the classical Berry--Esseen rate $1/\sqrt n$. 
Recently, by exploiting the regularity of $Z$, several authors have succeeded in getting bounds with $1/\sqrt n$ rates up to $\log n$ factors when $\Sigma$ is non-degenerate; see \cite{FaKo21,Lo22,KuRi20,CCK21}. In particular, by Corollary 1.1 in \cite{FaKo21}, if $X_1$ is log-concave (cf.~\cref{def:lc}), then
\ben{\label{eq:fk}
\rho(W,Z)\leq \frac{C}{\sigma_*^2}\sqrt{\frac{\log^3d}{n}}\log n,
}
where $C$ is a positive universal constant and $\sigma_*^2$ is the smallest eigenvalue of the correlation matrix of $W$. 
The bound \eqref{eq:fk} is rate-optimal up to the $\log n$ factor because $\sqrt{\frac{n}{\log^3d}}\rho(W,Z)$ does not vanish as $n\to\infty$ under appropriate growth conditions on $n$ and $d$ when the coordinates of $X_1$ are i.i.d.~and follow a standardized exponential distribution; see Proposition 1.1 in \cite{FaKo21}. 
Corollary 2.1 in \cite{CCK21} gives a similar bound to \eqref{eq:fk} without log-concavity when $X_{1j}$ are uniformly bounded.  

In this paper, we show that a bound of the form $C\sqrt{\log^a(dn)/n}$ for some constants $C,a>0$ is achievable even when $\Sigma$ is degenerate, provided that $X_1$ is log-concave. 
Remarkably, $C$ and $a$ can be taken universally and thus independently of $\Sigma$. 
In addition, if the \textit{Kannan--Lov\'asz--Simonovits (KLS) conjecture} is true, our bound is optimal 
in both $n$ and $d$ in the regime $\log n=O(\log d)$. 
To state the result formally, we introduce some definitions and notations. 

\begin{definition}[Log-concavity]\label{def:lc}
A probability measure $\mu$ on $\mathbb R^d$ is \textit{log-concave} if 
\[
\mu(\theta A+(1-\theta)B)\geq\mu(A)^\theta\mu(B)^{1-\theta}
\]
for any non-empty compact sets $A$ and $B$ of $\mathbb R^d$ and any $\theta\in(0,1)$. 
We say that a random vector $X$ in $\mathbb R^d$ is log-concave if its law $\mcl L(X)$ is log-concave. 
\end{definition}

\begin{definition}[Poincar\'e constant]
A probability measure $\mu$ on $\mathbb R^d$ is said to satisfy a \textit{Poincar\'e inequality} if there exists a constant $\varpi\geq0$ such that
\ben{\label{eq:poincare}
\Var_\mu(h):=\int h^2d\mu-\left(\int hd\mu\right)^2\leq\varpi\int|\nabla h|^2d\mu
}
for every locally Lipschitz function $h:\mathbb R^d\to\mathbb R$ with $h\in L^2(\mu)$. Here,
\[
|\nabla h(x)|:=\limsup_{y\to x}\frac{|h(y)-h(x)|}{|y-x|},\qquad x\in\mathbb R^d.
\]
The smallest constant $\varpi$ satisfying \eqref{eq:poincare} is called the \textit{Poincar\'e constant} of $\mu$ and denoted by $\varpi(\mu)$. By convention, we set $\varpi(\mu):=\infty$ if $\mu$ does not satisfy any Poincar\'e inequality. 
For a random vector $X$ in $\mathbb R^d$, we write $\varpi(X)=\varpi(\mcl L(X))$. 
\end{definition}
%
We denote by $\LC_d$ the set of isotropic (i.e., with zero mean and identity covariance) log-concave probability measures on $\mathbb R^d$. Define
\[
\varpi_d:=\sup_{\mu\in\LC_d}\varpi(\mu).
\]
The KLS conjecture suggests that $\varpi_d$ would be bounded by a universal constant. The currently best known bound is the following one due to \cite{KlLe22} (cf.~Theorem 1.1 and Eq.(7) ibidem):
\ben{\label{kl-est}
\varpi_d\leq C(1\vee\log ^{10}d).
}
Here and below, we use $C$ to denote positive universal constants, which may differ in different expressions. 
We refer to \cite{AGB15} for more background of the KLS conjecture.  

With these notations, our first main result is stated as follows:
\begin{theorem}\label{thm:lc-kol}
Let $n\geq2$ be an integer. 
Let $X_1,\dots,X_n$ be i.i.d.~log-concave random vectors in $\mathbb R^d$ with mean 0 and covariance matrix $\Sigma$, and set $W=n^{-1/2}\sum_{i=1}^nX_i$. 
Let $Z\sim N(0,\Sigma)$. Suppose that $\Sigma$ has rank $r\geq1$. Then 
\ben{\label{eq:lc-kol}
\sup_{A\in\mcl R}|P(W\in A)-P(Z\in A)|\leq C\sqrt{\varpi_r\frac{\log^2(dn)\log(2d)}{n}}.
}
\end{theorem}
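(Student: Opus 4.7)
The strategy is to reduce the problem to an isotropic log-concave one in the intrinsic dimension $r$, invoke the Gaussian coupling inequality that is this paper's main technical contribution, and then transfer the coupling bound into a rectangle-distance bound by the Chernozhukov--Chetverikov--Kato smoothing technique combined with Nazarov's Gaussian anti-concentration. First I would reduce as follows: since $\rank\Sigma=r$, each $X_i$ lies a.s.\ in the $r$-dimensional range of $\Sigma$; picking $U\in\IR^{d\times r}$ with orthonormal columns spanning that range, the vectors $(U^T\Sigma U)^{-1/2}U^TX_i$ are isotropic log-concave on $\IR^r$, which is where the Poincar\'e constant $\varpi_r$ enters. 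A rectangle $A=\prod_{j=1}^d[a_j,b_j]$ pulls back to the intersection of at most $2d$ halfspaces in $\IR^r$, so while the intrinsic dimension drops to $r$, the number of linear constraints that define $\{W\in A\}$ remains $\leq 2d$; this distinction is precisely what separates the factors $\varpi_r$ and $\sqrt{\log(2d)}$ in the final bound.

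The central input is a Gaussian coupling inequality, to be proved later in the paper, which constructs a coupling $(W,Z')$ with $Z'\sim N(0,\Sigma)$ such that, for every direction $v\in\IR^d$ and every $p\geq2$,
\be{
\|v^T(W-Z')\|_{L^p}\leq C\sqrt{\tfrac{p\,\varpi_r}{n}}\,\sqrt{v^T\Sigma v}
}
up to polylogarithmic factors in $dn$. The decisive feature is that the right-hand side involves only the intrinsic Poincar\'e constant $\varpi_r$ and the Gaussian scale $\sqrt{v^T\Sigma v}$, with no explicit factor of the ambient dimension $d$; this almost dimension-free behaviour is what makes a meaningful bound possible when $\Sigma$ is degenerate or $d\gg n$. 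Applying this with $v=e_j/\sqrt{\Sigma_{jj}}$ for each $j$ with $\Sigma_{jj}>0$ (degenerate coordinates play no role, since $W_j=Z'_j=0$ almost surely), choosing $p$ of order $\log(dn)$, and combining Markov's inequality with a union bound over $j$, I arrive at
\be{
\P\klr{\max_{j:\Sigma_{jj}>0}\tfrac{|W_j-Z'_j|}{\sqrt{\Sigma_{jj}}}>\eps_n}\leq\tfrac{1}{n},\qquad\eps_n:=C\sqrt{\tfrac{\varpi_r\log^2(dn)}{n}}.
}

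The Kolmogorov distance over rectangles then follows from the inclusion
\be{
\{W\in A\}\subseteq\{Z'\in A^{\eps_n}\}\cup\bigcup_{j:\Sigma_{jj}>0}\klg{|W_j-Z'_j|>\eps_n\sqrt{\Sigma_{jj}}},
}
together with the matching lower inclusion, where $A^{\eps_n}$ denotes the $\eps_n$-enlargement of $A$ in coordinate-standardized units. The union-bound contribution is $O(1/n)$ by the previous display, while Nazarov's anti-concentration inequality for rectangles gives $\P(Z'\in A^{\eps_n}\setminus A^{-\eps_n})\leq C\eps_n\sqrt{\log(2d)}$; substituting $\eps_n$ yields the asserted bound $C\sqrt{\varpi_r\log^2(dn)\log(2d)/n}$. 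The main obstacle is not this smoothing bookkeeping but the coupling inequality itself: producing a coupling whose projected error is sub-Gaussian at scale $\sqrt{\varpi_r/n}$ with no hidden factor of $d$ is precisely where Stein's method and Eldan's stochastic localization must do the real work, and that is the genuine difficulty handled separately in the coupling part of the paper.
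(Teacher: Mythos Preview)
Your proposal is correct and follows essentially the same route as the paper: invoke the projected coupling inequality (\cref{thm:lc}), apply it with $u=e_j/\sigma_j$ and $p=\log(dn)$, turn this into a tail bound $P(\max_j|W_j-Z_j|/\sigma_j>\eps_n)\leq 1/n$ via Markov plus a union bound, and finish with the CCK comparison lemma together with Nazarov's anti-concentration. The only minor discrepancy is that the coupling bound in \cref{thm:lc} scales like $p$ (not $\sqrt p$) in its leading term, with two further lower-order terms that the paper absorbs under the harmless assumption $\varpi_r\log^2(dn)\log(2d)\leq n$; your ``up to polylogarithmic factors'' and your stated $\eps_n$ already account for this, so the argument goes through as written.
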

The most remarkable feature of the bound \cref{eq:lc-kol} is that the right hand side is bounded by a quantity independent of $\Sigma$ because $\varpi_r\leq\varpi_d$. Note that the log-concavity itself does not impose any restriction on $\Sigma$ because it is invariant under affine transformation. In particular, the bound \eqref{eq:lc-kol} holds even when $\Sigma$ is degenerate. Combining \eqref{eq:lc-kol} with the estimate \eqref{kl-est}, we obtain a bound for $\rho(W,Z)$ with the rate $O(\sqrt{\log^{13}(dn)/n})$. In terms of the dependence on $n$, this improves the bound derived from \eqref{eq:cckk}. 
Moreover, if the KLS conjecture is true, \cref{thm:lc-kol} gives a bound for $\rho(W,Z)$ of the form $C\sqrt{\log^3(dn)/n}$. As shown by Proposition 1.1 in \cite{FaKo21}, this bound is rate-optimal 
in both $n$ and $d$ when $\log n=O(\log d)$. 


To prove \cref{thm:lc-kol}, we construct a coupling of $W$ and $Z$ such that $\|u\cdot(W-Z)\|_p$ enjoys an almost dimension-free bound for any $u\in\mathbb R^d$ and $p\geq1$, where $\cdot$ is the Euclidean inner product and $\|\cdot\|_p$ is the $L^p$-norm with respect to the underlying probability measure. See \cref{sec:prowass} for the precise result.  
Such a bound can be used to control the tail probability of $\max_{1\leq j\leq d}|W_j-Z_j|/\sigma_{j}$ with $\sigma_j:=\sqrt{\Sigma_{jj}}$. As illustrated by Lemma 2.1 in \cite{CCK16}, we can derive a bound for the Kolmogorov distance between $\max_{1\leq j\leq d}(W_j-x_j)/\sigma_j$ and $\max_{1\leq j\leq d}(Z_j-x_j)/\sigma_j$ for any $x\in\mathbb R^d$ from such a control along with an anti-concentration inequality for $\max_{1\leq j\leq d}(Z_j-x_j)/\sigma_j$, and this leads to a bound for $\rho(W,Z)$. Hence, our main technical contribution is derivation of the afore-mentioned coupling inequality for $W$ and $Z$. This new coupling inequality is shown by combining Stein's method and Eldan's stochastic localization procedure as detailed in \cref{sec:proof}.  

Our coupling inequality naturally leads to a bound for the $p$-Wasserstein distance between $W$ and $Z$ for any $p\geq1$, which will be of independent interest. Let us recall the definition of the $p$-Wasserstein distance:
\begin{definition}[$p$-Wasserstein distance]
Let $\mu$ and $\nu$ be two probability measures on $\mathbb{R}^d$. For $p\geq1$, the $p$-Wasserstein distance between $\mu$ and $\nu$ is defined as
\be{
\mcl{W}_p(\mu, \nu)=\left(\inf_\pi\int_{\mathbb{R}^d\times \mathbb{R}^d} |x-y|^p \pi(dx, dy) \right)^{1/p},
}
where $|\cdot|$ denotes the Euclidean norm and $\pi$ is a measure on $\mathbb{R}^d\times \mathbb{R}^d$ with marginals $\mu$ and $\nu$. 
For two random vectors $X$ and $Y$ in $\mathbb{R}^d$, we write $\mcl{W}_p(X, Y)=\mcl{W}_p(\mcl{L}(X), \mcl{L}(Y))$. 
\end{definition}
\begin{theorem}\label{thm:lc-wass}
Under the same assumptions as \cref{thm:lc-kol}, we have 
\ben{\label{lc:wass}
\mcl W_p(W,Z)\leq C\sqrt{\tr(\Sigma)}\left(\sqrt{\varpi_r}\frac{p}{\sqrt n}
+\varpi_r\log(2r)\frac{p^{3/2}}{n}
+\frac{1}{\sqrt{\varpi_r\log (2r)}}\frac{p^{5/2}}{n}\right)
}
for any $p\geq1$. 
Moreover, if $\varpi_r\log^2(2r)\leq cn$ for some positive constant $c$, there exists a constant $C'$ depending only on $c$ such that
\ben{\label{lc:wass-simple}
\mcl W_p(W,Z)\leq C'\frac{p^{2}\sqrt{\tr(\Sigma)\varpi_r}}{\sqrt n}
}
for any $p\geq1$. 
\end{theorem}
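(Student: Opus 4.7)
The plan is to pass from the projected coupling inequality established in \cref{sec:prowass} --- which controls $\norm{u\cdot(W-Z)}_p$ for every unit $u\in\mathbb R^d$ --- to the full $p$-Wasserstein distance by summing squared projections along an orthonormal basis of $\mathrm{range}(\Sigma)$. A short case split will then give the simplified bound \eqref{lc:wass-simple}.

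First, since $W$ and $Z$ both lie almost surely in $V:=\mathrm{range}(\Sigma)$, fix any orthonormal basis $e_1,\dots,e_r$ of $V$. Take the coupling of $(W,Z)$ supplied by the projected coupling inequality as a candidate coupling for $\mcl W_p$. Then the pointwise identity $|W-Z|^2=\sum_{i=1}^r(e_i\cdot(W-Z))^2$ holds. Assume $p\geq 2$; the range $1\leq p<2$ follows from $\mcl W_p\leq\mcl W_2$, absorbing constants into $C$. Applying Minkowski's inequality in $L^{p/2}$ (valid since $p/2\geq 1$) yields
\bes{
\mcl W_p(W,Z)^2 &\leq\bbnorm{\,|W-Z|^2\,}_{p/2}=\bbbnorm{\sum_{i=1}^r\bklr{e_i\cdot(W-Z)}^2}_{p/2}\\
&\leq\sum_{i=1}^r\bnorm{e_i\cdot(W-Z)}_p^2.
}

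Second, apply the projected coupling inequality at each $u=e_i$. In its natural form it yields $\bnorm{e_i\cdot(W-Z)}_p\leq C\sqrt{e_i^\top\Sigma e_i}\cdot G(p,n,r)$, where
\be{
G(p,n,r):=\sqrt{\varpi_r}\,\frac{p}{\sqrt n}+\varpi_r\log(2r)\,\frac{p^{3/2}}{n}+\frac{1}{\sqrt{\varpi_r\log(2r)}}\,\frac{p^{5/2}}{n}.
}
Summing and using $\sum_{i=1}^r e_i^\top\Sigma e_i=\tr(\Sigma)$ gives $\mcl W_p(W,Z)\leq C\,G(p,n,r)\sqrt{\tr(\Sigma)}$, which is \eqref{lc:wass}.

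Third, for \eqref{lc:wass-simple}, under $\varpi_r\log^2(2r)\leq cn$ I would split on $p$. When $p\leq C_1 n\varpi_r^2\log(2r)$, each term of $G$ is dominated by $\sqrt{\varpi_r}\,p^2/\sqrt n$: Term~1 since $p\geq 1$; Term~2 since $\sqrt{\varpi_r}\log(2r)/\sqrt n\leq\sqrt c$ by the side assumption together with $p^{3/2}\leq p^2$; Term~3 precisely by the case hypothesis on $p$. In the complementary regime $p>C_1 n\varpi_r^2\log(2r)$, one checks $p\gtrsim\sqrt{n/\varpi_r}$ (using $\varpi_r\geq 1$), and uses the crude log-concavity bound: both $W$ (a convolution of log-concave laws is log-concave) and $Z$ are log-concave, so Paouris-type moment inequalities give $(\E|W|^p)^{1/p},(\E|Z|^p)^{1/p}\leq Cp\sqrt{\tr(\Sigma)}$, whence $\mcl W_p(W,Z)\leq 2Cp\sqrt{\tr(\Sigma)}\leq C'p^2\sqrt{\tr(\Sigma)\varpi_r}/\sqrt n$ in this range.

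Granted the projected coupling inequality --- the paper's principal input, proved in \cref{sec:proof} via Stein's method combined with Eldan's stochastic localization --- the derivation of \eqref{lc:wass} amounts to Minkowski plus a trace identity. The slightly delicate part is the case split for \eqref{lc:wass-simple}: Term~3 of $G$ fails to be absorbed into $\sqrt{\varpi_r}\,p^2/\sqrt n$ when $p$ is very large, and one must invoke the log-concave moment bound (rather than the projected inequality) to cover that regime.
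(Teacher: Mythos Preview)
Your proposal is correct and follows essentially the same route as the paper: apply the projected coupling inequality of \cref{thm:lc} along a basis, sum via Minkowski in $L^{p/2}$ to obtain $\tr(\Sigma)$, and for \eqref{lc:wass-simple} split on $p$ and cover the large-$p$ regime by the reverse H\"older/Paouris bound $\|W\|_p\leq Cp\sqrt{\tr(\Sigma)}$ for log-concave vectors. The only cosmetic differences are that the paper uses the standard basis of $\mathbb R^d$ (with $|\Sigma^{1/2}e_j|=\sigma_j$) rather than an orthonormal basis of $\mathrm{range}(\Sigma)$, and splits more simply at $p=\sqrt n$ rather than at $p=C_1 n\varpi_r^2\log(2r)$; both choices lead to the same conclusion.
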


In view of \eqref{kl-est}, the condition $\varpi_r\log^2(2r)\leq cn$ will be a rather mild restriction. 
When $X_1$ has independent coordinates and $X_{1j}$ has a non-zero skewness $\gamma_j$ for all $j=1,\dots,d$, $\mcl W_p(W,Z)$ is lower bounded by $c\sqrt{\tr(\Sigma)/n}$ with some positive constant $c$ depending only on $\min_{1\leq j\leq d}|\gamma_j|$ in view of Theorem 1.1 in \cite{Ri11}. Hence the bound \eqref{lc:wass-simple} has optimal dependence on $n,d$ and $\Sigma$ if the KLS conjecture is true. 
Also, when $\Sigma=I_d$, \eqref{kl-est} and \eqref{lc:wass-simple} give an upper bound for $\mcl W_p(W,Z)$ of the form $Cp^2\sqrt{d\log^{10}(2d)/n}$, which improves the currently best known bound $Cp^4d/\sqrt n$ given by Theorem 3.3 in \cite{Fa19}. 
We remark that Theorem 4.1 in \cite{CFP19} implies that the bound \eqref{lc:wass-simple} for $p=2$ and $\Sigma=I_d$ holds without the condition $\varpi_r\log^2(2r)\leq cn$. 
Indeed, by a simplified proof of \cref{thm:lc-wass} for the case $p=2$ using \cref{key0}, we can show $\mcl W_2(W,Z)\leq C'\sqrt{\tr(\Sigma)\varpi_r}/\sqrt n$ without the condition $\varpi_r\log^2(2r)\leq cn$. This bound is completely dimension-free if the KLS conjecture is correct.


Yet another application of our coupling inequality gives the following Cram\'er type moderate deviation result for $\max_{1\leq j\leq d}W_j$:
\begin{theorem}\label{thm:lc-md}
Under the same assumptions as \cref{thm:lc-kol}, suppose additionally that $\sigma_j>0$ for all $j=1,\dots,d$. 
Set 
\ben{\label{def:sigma}
\ol\sigma=\max_{1\leq j\leq d}\sigma_j,\qquad\ul\sigma=\min_{1\leq j\leq d}\sigma_j.
}
Then, there exist universal constants $c\in(0,1)$ and $C>0$ such that, for
\[
\frac{\ol\sigma^2\varpi_r\log^3(3d)}{\ul\sigma^2n}\leq c,\qquad
0\leq x\leq\ul\sigma\left(\frac{\ul\sigma^2n}{\ol\sigma^2\varpi_r}\right)^{1/6},
\]
we have
\ben{\label{eq:lc-md}
\left|\frac{P(\max_{1\leq j\leq d}W_j>x)}{P(\max_{1\leq j\leq d}Z_j>x)}-1\right|\leq C\left(1+\frac{x}{\ul\sigma}\right)\left(\log\left(dn\right)+\frac{x^2}{\ol\sigma^2}\right)\frac{\ol\sigma}{\ul\sigma}\sqrt{\frac{\varpi_r}{n}}.
}
\end{theorem}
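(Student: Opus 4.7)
The plan is to combine the almost-dimension-free coupling of \cref{sec:prowass} with a Gaussian max ratio comparison. Fix the joint law of $(W, Z)$ supplied by the coupling, so that coordinate-wise one has $\|(W_j - Z_j)/\sigma_j\|_p \lesssim p \sqrt{\varpi_r/n}$ for $p$ in the admissible range (which, under the assumed hypotheses, reaches the order $\log(dn) + x^2/\ol\sigma^2$). Abbreviate $M_W = \max_{1\le j\le d} W_j$ and $M_Z = \max_{1\le j\le d} Z_j$. For any $\delta > 0$, the inclusions
\[
\{M_Z > x + \delta\} \setminus \{\max_j|W_j - Z_j| > \delta\} \subset \{M_W > x\} \subset \{M_Z > x - \delta\} \cup \{\max_j |W_j - Z_j| > \delta\}
\]
reduce the task to bounding (i) the Gaussian ratio $P(M_Z > x \mp \delta)/P(M_Z > x) - 1$ and (ii) the coupling-tail ratio $P(\max_j|W_j - Z_j| > \delta)/P(M_Z > x)$.

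For step (ii), a union bound followed by Markov's inequality of order $p$ combined with the coupling yields
\[
P\bigl(\max_j |W_j - Z_j| > \delta\bigr) \leq d \Bigl(\frac{C\ol\sigma\, p \sqrt{\varpi_r/n}}{\delta}\Bigr)^{p}.
\]
A standard Mills-ratio lower bound for the coordinate with the largest variance gives $P(M_Z > x) \geq c \exp(-x^2/(2\ol\sigma^2))/(1 + x/\ol\sigma)$. Choosing $p$ of order $\log(dn) + x^2/\ol\sigma^2$ and $\delta$ of order $(\ol\sigma/\ul\sigma)\sqrt{\varpi_r/n}\,(\log(dn) + x^2/\ol\sigma^2)$ makes step (ii) of smaller order than the right-hand side of \eqref{eq:lc-md}. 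The hypotheses $\ol\sigma^2\varpi_r\log^3(3d)/(\ul\sigma^2 n) \le c$ and $x \le \ul\sigma(\ul\sigma^2 n/(\ol\sigma^2 \varpi_r))^{1/6}$ are precisely what guarantee that this $p$ lies in the range for which the coupling inequality supplies the $L^p$ bound above, and that $\delta/\ol\sigma$ remains small.

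The main obstacle is step (i), the Gaussian anti-concentration ratio
\[
\frac{P(x - \delta < M_Z \leq x)}{P(M_Z > x)} \leq C\frac{\delta}{\ol\sigma}\Bigl(1 + \frac{x}{\ul\sigma}\Bigr),
\]
which is where both the $\ol\sigma/\ul\sigma$ and the $1 + x/\ul\sigma$ factors of \eqref{eq:lc-md} originate. I would prove this by decomposing $\{M_Z \in (x - \delta, x]\}$ according to the index $j^\ast$ attaining the maximum, applying Mills ratio to the marginal Gaussian density of $Z_{j^\ast}$ to bound the hazard rate of $M_Z$ at level $x$ in terms of $(1 + x/\sigma_{j^\ast})/\sigma_{j^\ast}$, and using the extreme variance ratio $\ol\sigma/\ul\sigma$ to control the residual dependence on the remaining coordinates. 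Substituting the chosen $\delta$ then produces exactly the claimed right-hand side of \eqref{eq:lc-md}; the corresponding lower bound on the ratio is obtained symmetrically from the reverse inclusion.
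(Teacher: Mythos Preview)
Your overall architecture matches the paper's: the same coupling from \cref{thm:lc}, the same sandwich inclusions, the same union-bound-plus-Markov control of $P(\max_j|W_j-Z_j|>\delta)$, and the same Mills-ratio lower bound $P(M_Z>x)\geq P(Z_J>x)$ via the coordinate of largest variance. The paper packages this into \cref{md-max}, whose proof follows your steps (i) and (ii) with $p=\log d+\log(\ul\sigma/\Delta)+x^2/\ol\sigma^2$ and $\delta=eAp\Delta$, $\Delta=\ol\sigma\sqrt{\varpi_r/n}$.

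The gap is in your sketch of step (i). Decomposing $\{M_Z\in(x-\delta,x]\}$ by the argmax and ``applying Mills ratio to the marginal of $Z_{j^*}$'' does not close on its own: writing the density of $M_Z$ as $\sum_j\sigma_j^{-1}\phi(x/\sigma_j)Q_j(x)$ with $Q_j(x)=P(Z_k\leq x\ \forall k\neq j\mid Z_j=x)$, the term-by-term hazard bound you need is
\[
\sigma_j^{-1}\phi(x/\sigma_j)Q_j(x)\;\leq\;\frac{1+x/\ul\sigma}{\ul\sigma}\int_x^\infty\sigma_j^{-1}\phi(y/\sigma_j)Q_j(y)\,dy,
\]
and this hinges on $Q_j(y)$ being non-decreasing in $y$, which is nontrivial because the conditional means of the other coordinates shift with $y$. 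Your phrase ``using the extreme variance ratio to control the residual dependence'' does not address this. The paper avoids your argmax route: its \cref{lem:gmax-tail} first standardizes to $\tilde Z_j=(Z_j-x)/\sigma_j+x/\ul\sigma$ (unit variances, nonnegative means) and then invokes Lemmas~5 and~6 of \cite{CCK15}, which give that $\max_j\tilde Z_j$ has density $\phi(z)G(z)$ with $G$ non-decreasing; the hazard bound then follows from $G(z)\int_z^\infty\phi\leq\int_z^\infty\phi G$. That monotonicity result is precisely the ingredient your sketch is missing. With it, the correct anti-concentration bound is $(\delta/\ul\sigma)(1+x/\ul\sigma)\exp(\delta x/\ul\sigma^2)$ --- note $\ul\sigma$, not $\ol\sigma$ as you wrote --- and combining it with $\delta\sim\ol\sigma p\sqrt{\varpi_r/n}$ (no extra $1/\ul\sigma$ in $\delta$ is needed) reproduces \eqref{eq:lc-md}.
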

Note that, applying the result to $(W^\top,-W^\top)^\top$, we can replace $\max_{1\leq j\leq d}W_j$ and $\max_{1\leq j\leq d}Z_j$ in \eqref{eq:lc-md} with $\max_{1\leq j\leq d}|W_j|$ and $\max_{1\leq j\leq d}|Z_j|$, respectively. 
Corollary 5.1 in \cite{KMB21} gives a Cram\'er type moderate deviation result for $\max_{1\leq j\leq d}|W_j|$ with the bound of the form $K\{(1+x)^6\log^{16}(3d)/n\}^{1/6}$ when coordinates of $X_1$ are sub-exponential, where $K$ is a positive constant depending only on $\ol\sigma$, $\ul\sigma$ and sub-exponential norms of $X_{1j}$. 
In the meantime, denoting by $K'$ a positive constant depending only on $\ol\sigma$ and $\ul\sigma$, we can bound the right hand side of \eqref{eq:lc-md} as
\ba{
K'(1+x)(\log(dn)+x^2)\sqrt{\frac{\log^{10}(3d)}{n}}
&\leq K'\left(\log(dn)+x^3+\log^{3/2}(dn)+x^2+x^3\right)\sqrt{\frac{\log^{10}(3d)}{n}}\\
&\leq K'\left(2+2\log^{3/2}(dn)+3x^3\right)\sqrt{\frac{\log^{10}(3d)}{n}}\\
&\leq \sqrt 3K'\sqrt{\frac{(4+4\log^3(dn)+9x^6)\log^{10}(3d)}{n}},
}
where the first inequality follows by the elementary inequality $x\log(dn)\leq x^3/3+\log^{3/2}(dn)/(3/2)\leq x^3+\log^{3/2}(dn)$,  the second by $\log(dn)\leq1+\log^{3/2}(dn)$ and $x^2\leq1+x^3$, and the last by $(a+b+c)^2\leq 3(a^2+b^2+c^2)$ for any $a,b,c\in\mathbb R$. 
Consequently, our bound improves \cite{KMB21}'s one when $X_1$ are log-concave. 
Moreover, inspection of the proof of Proposition 1.1 in \cite{FaKo21} leads to the following result, showing that the bound \eqref{eq:lc-md} is sharp if the KLS conjecture is true. 
\begin{proposition}\label{p1}
Let $X=(X_{ij})_{i,j=1}^\infty$ be an array of i.i.d.~random variables such that $\E\exp(c|X_{ij}|)<\infty$ for some $c>0$, $\E X_{ij}=0$, $\E X_{ij}^2=1$ and $\gamma:=\E X_{ij}^3\neq0$. 
Let $W=n^{-1/2}\sum_{i=1}^n X_i$ with $X_i:=(X_{i1},\dots,X_{id})^\top$. 
Suppose that $d$ depends on $n$ so that $(\log^3d)/n\to0$ and $d(\log^3d)/n\to\infty$ as $n\to\infty$. 
Also, let $Z\sim N(0, I_d)$.
Then there exists a sequence $(x_n)$ of positive numbers such that $x_n=o(n^{1/6})$ as $n\to\infty$ and
\[
\limsup_{n\to\infty}\sqrt{\frac{n}{x_n^6+\log^3(dn)}}\left|\frac{P\left(\max_{1\leq j\leq d}W_j> x_n\right)}{P\left(\max_{1\leq j\leq d}Z_j> x_n\right)}-1\right|>0.
\]
\end{proposition}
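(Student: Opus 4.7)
My plan follows the strategy used to prove Proposition~1.1 of \cite{FaKo21}, which essentially only needs to be re-tracked against the normalization appearing on the left-hand side of the desired bound. The key observation is that, because all entries of the array $(X_{ij})$ are jointly i.i.d., the coordinates $W_1,\dots,W_d$ of $W$ are independent, so
\[
P(\max_{1\leq j\leq d}W_j\leq x)=P(W_1\leq x)^d
\]
and analogously for $Z$. This reduces the problem to a classical one-dimensional Cram\'er moderate deviation expansion, combined with careful bookkeeping of the error when the factors are raised to the $d$-th power.

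Concretely, I would invoke the standard univariate Cram\'er moderate deviation theorem (e.g., Petrov's theorem), which under the finite exponential moment assumption asserts
\[
\frac{P(W_1>x)}{P(Z_1>x)}=\exp\bklr{x^3\lambda(x/\sqrt n)/\sqrt n}\bklr{1+O((1+x)/\sqrt n)}
\]
uniformly for $0\leq x=o(\sqrt n)$, where $\lambda$ is the Cram\'er series with $\lambda(0)=\gamma/6\neq0$. I would then choose $x_n$ so that $d\,P(Z_1>x_n)\to t$ for a fixed $t\in(0,\infty)$. By Mills' ratio this forces $x_n\sim\sqrt{2\log d}$, hence $x_n^3\asymp(\log d)^{3/2}$, and the hypothesis $(\log^3d)/n\to0$ gives $x_n=o(n^{1/6})$ as required. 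Using $(1-y)^d=e^{-dy}(1+O(dy^2))$ with $y=P(W_1>x_n)=O(1/d)$ together with the Cram\'er expansion, a short computation yields
\[
\frac{P(\max_{1\leq j\leq d}W_j>x_n)}{P(\max_{1\leq j\leq d}Z_j>x_n)}=1+\frac{t\,e^{-t}}{1-e^{-t}}\cdot\frac{\gamma x_n^3}{6\sqrt n}(1+o(1))+O(1/d).
\]

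Since $\gamma\neq0$, the left-hand side minus one then has magnitude of exact order $x_n^3/\sqrt n\asymp(\log d)^{3/2}/\sqrt n$, as soon as the $O(1/d)$ error is negligible relative to $x_n^3/\sqrt n$. To match this with the normalizing factor $\sqrt{(x_n^6+\log^3(dn))/n}$ in the target, I would verify that the two growth hypotheses force $\log(dn)\asymp\log d$: the condition $d(\log^3d)/n\to\infty$ gives $d\geq n/\log^3 d$ for all large $n$, from which a brief case analysis on whether $\log d\gtrless\log n$ yields $\log d\geq(1-o(1))\log n$; combined with $\log(dn)\leq\log d+\log n$ this forces $\log(dn)\asymp\log d$, so $x_n^6+\log^3(dn)\asymp(\log d)^3$ and the target rate is of order $(\log d)^{3/2}/\sqrt n$, matching the lower bound on the ratio. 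The point to watch carefully is that all three remainders --- the $O((1+x)/\sqrt n)$ in the Cram\'er expansion, the $O(dy^2)=O(1/d)$ in the binomial approximation, and the exponentiation of these errors --- are genuinely $o(x_n^3/\sqrt n)$; the most delicate one is $1/d=o(x_n^3/\sqrt n)$, equivalently $n=o(d^2\log^3 d)$, which follows directly from the assumption $n=o(d\log^3 d)$ combined with $d\to\infty$.
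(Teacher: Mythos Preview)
Your proposal is correct and follows exactly the route the paper indicates: the paper does not give an independent proof but simply states that the result follows by ``inspection of the proof of Proposition 1.1 in \cite{FaKo21}'', which is precisely the argument you have written out. Your additional verification that $\log(dn)\asymp\log d$ under the stated growth conditions (needed here because the normalizing factor involves $\log(dn)$ rather than $\log d$) is the only extra step beyond \cite{FaKo21}, and it is handled correctly.
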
  

\begin{remark}
Cram\'er's original result in the univariate case gives a higher order asymptotic expansion of $P(W>x)$ for moderately large $x$ (see e.g.~\cite[Chapter VIII, Theorem 1]{Pe75}), while \cref{thm:lc-md} concerns only the first order asymptotic expansion of a possible moderate deviation result. 
We refer to a result like \cref{thm:lc-md} as a ``Cram\'er type moderate deviation result'' following the custom in Stein's method literature (see e.g.~Chapter 11 of \cite{CGS11}). 
\end{remark}

Finally, for uniformly log-concave random vectors, we can remove dependence on the constant $\varpi_r$. 
Following \cite{SaWe14}, we define the uniform log-concavity as follows:
\begin{definition}[Uniform log-concavity]
Let $\eps>0$. 
A probability density function $q:\mathbb R^d\to[0,\infty)$ is said to be \textit{$\eps$-uniformly log-concave} if there is a log-concave function $g:\mathbb R^d\to[0,\infty)$ such that $q(x)=g(x)e^{-\eps |x|^2/2}$ for every $x\in\mathbb R^d$. 

A probability measure $\mu$ on $\mathbb R^d$ is said to be $\eps$-uniformly log-concave if it has an $\eps$-uniformly log-concave density. 
A random vector $X$ in $\mathbb R^d$ is said to be $\eps$-uniformly log-concave if its law is $\eps$-uniformly log-concave. 
\end{definition}
Our definition of $\eps$-uniform log-concavity is equivalent to strong log-concavity with variance parameter $\eps^{-1}$ in \cite[Definition 2.9]{SaWe14}. 
Thus, if $q:\mathbb R^d\to[0,\infty)$ is a probability density function of the form $e^{-V}$ with $V:\mathbb R^d\to\mathbb R$ a $C^2$ function, then $q$ is $\eps$-uniformly log-concave if and only if $\Hess V-\eps I_d$ is positive semidefinite; see Proposition 2.24 in \cite{SaWe14}. 

\begin{theorem}\label{thm:ulc-clt}
Let $n\geq2$ be an integer and $\eps\in(0,1)$ a constant. 
Let $X_1,\dots,X_n$ be i.i.d.~isotropic $\eps$-uniformly log-concave random vectors in $\mathbb R^d$, and set $W=n^{-1/2}\sum_{i=1}^nX_i$. 
Let $Z\sim N(0,I_d)$. 
Then, there exist universal constants $c$ and $C$ such that
\ben{\label{ulc:kol}
\sup_{A\in\mathcal R}|P(W\in A)-P(Z\in A)|\leq C\sqrt{\frac{\log^2(dn)\log(2d)}{\eps n}}
}
and
\ben{\label{ulc:wass}
\mcl W_p(W,Z)\leq C\sqrt d\left(\frac{p}{\sqrt{\eps n}}
+\frac{p^{3/2}}{\eps n}\right)
}
for any $p\geq1$. 
Moreover, for 
\ben{\label{ulc:md-ass}
\frac{\log^3(3d)}{\eps n}\leq c,\qquad
0\leq x\leq(\eps n)^{1/6},
}
we have
\ben{\label{ulc:md}
\left|\frac{P(\max_{1\leq j\leq d}W_j>x)}{P(\max_{1\leq j\leq d}Z_j>x)}-1\right|\leq C\frac{(1+x)(\log(dn)+x^2)}{\sqrt{\eps n}}.
}
\end{theorem}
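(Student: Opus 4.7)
The plan is to mirror the proofs of \cref{thm:lc-kol}, \cref{thm:lc-wass}, and \cref{thm:lc-md}, using $1/\eps$ as the relevant Poincar\'e constant in place of the dimensional supremum $\varpi_r$. The key observation underlying this substitution is that any $\eps$-uniformly log-concave measure $\mu$ on $\mathbb R^r$ satisfies $\varpi(\mu)\leq 1/\eps$. When $\mu$ has a $C^2$ potential, this is the Brascamp--Lieb (equivalently Bakry--\'Emery) inequality: $\Hess V\succeq \eps I_r$ by Proposition 2.24 of \cite{SaWe14}, which yields the required variance bound for every smooth test function. The general case follows by convolving with a small Gaussian (which preserves $\eps$-uniform log-concavity) and letting the smoothing parameter tend to zero. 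In particular, under the assumptions of \cref{thm:ulc-clt}, $X_1$ has Poincar\'e constant at most $1/\eps$, a bound independent of $d$.

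Next, I would verify that the Poincar\'e constants of \emph{all} measures arising in the underlying coupling inequality are also bounded by $1/\eps$. The coupling is built from Eldan's stochastic localization, which tilts the law of $X_1$ by Gaussian densities of the form $e^{-s|x-\theta|^2/2}$ for $s\geq 0$ and $\theta\in\mathbb R^d$. Such a tilt preserves $\eps$-uniform log-concavity (it actually improves the parameter to $\eps+s$), so every tilted conditional measure remains $\eps$-uniformly log-concave and therefore satisfies the Poincar\'e bound $1/\eps$. Substituting this bound for $\varpi_r$ throughout the proofs of \cref{thm:lc-kol} and \cref{thm:lc-md}, and specialising to the isotropic case ($\tr(\Sigma)=d$, $\ol\sigma=\ul\sigma=1$, $r=d$), yields \eqref{ulc:kol} and \eqref{ulc:md} directly; the assumptions in \eqref{ulc:md-ass} are exactly the isotropic specializations of the hypotheses of \cref{thm:lc-md} with $\varpi_r\mapsto 1/\eps$.

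The main obstacle is the $p$-Wasserstein estimate \eqref{ulc:wass}, which is strictly cleaner than what would result from naively plugging $\varpi_r\mapsto 1/\eps$ into \eqref{lc:wass}: both the $\log(2d)$ factor in the middle term and the entire third term must be eliminated. I would address this by re-running the proof of \cref{thm:lc-wass} with the dimension-free bound $1/\eps$ applied directly to every tilted measure, rather than via the supremum $\varpi_r$. In the log-concave proof, the $\log(2r)$ inflation arises from an operator-norm estimate that is needed because $\varpi_r$ only controls the trace of the covariance of a generic isotropic log-concave law, not its spectral norm; in the $\eps$-uniformly log-concave setting the spectral norm of the covariance of each tilt is directly bounded by $1/\eps$, rendering that logarithmic step unnecessary and also allowing the residual third term of \eqref{lc:wass} to be absorbed into the first two when the improved bounds are integrated against the stochastic-localization time parameter. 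Collecting these improvements gives the two-term bound \eqref{ulc:wass}, completing the proof.
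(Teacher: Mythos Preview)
Your route diverges from the paper's and the divergence creates a real gap, especially for \eqref{ulc:wass}. The paper does \emph{not} derive \cref{thm:ulc-clt} by pushing $\varpi_r\mapsto 1/\eps$ through the stochastic-localization proof of \cref{thm:lc}. Instead it invokes the separate coupling inequality \cref{thm:ulc}, whose proof bypasses stochastic localization entirely: since $\mu$ is already $\eps$-uniformly log-concave, one applies \cref{prop:sk-wrap} directly with $A=I_d$ and $f_y\equiv f$, obtaining the clean two-term bound \eqref{eq:ulc} in one stroke. With \eqref{eq:ulc} in hand, all three conclusions \eqref{ulc:kol}, \eqref{ulc:wass}, \eqref{ulc:md} follow by rerunning the arguments of \cref{thm:lc-kol}, \cref{thm:lc-wass}, \cref{thm:lc-md} with \eqref{eq:ulc} in place of \eqref{lc:prowass}. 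The whole point of stochastic localization in \cref{thm:lc} is to \emph{manufacture} uniform log-concavity; when it is assumed from the outset, that machinery is redundant.

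Your justification also has a concrete gap. You claim $\varpi_r$ enters the proof of \cref{thm:lc} only through Poincar\'e constants of Gaussian tilts, but this is not so: it also enters through the constraint on the localization time in \cref{me-bound}, via $\kappa_d$ and \cref{lem:kl}. That constraint is exactly the source of the $\log(2r)$ in the second term of \eqref{lc:prowass} (it fixes the localization time to $(c_0\varpi_d\log(2d))^{-1}$, which then appears inverted in the $p^{3/2}/n$ coefficient), not any ``trace versus spectral norm'' issue. Likewise, the third term of \eqref{lc:prowass} is the martingale-embedding error from \cref{me-bound} and does not get ``absorbed'' by better Poincar\'e bounds on tilts; it disappears only when the localization time is sent to zero, which is precisely the paper's shortcut. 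Your approach could be salvaged by exploiting the almost-sure bound $\|\Gamma_s\|_{op}\leq (\eps(1-s)+s)^{-1}$ available in the $\eps$-uniformly log-concave case to relax the localization-time constraint, but this is a detour compared with simply citing \cref{thm:ulc}.
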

%
For fixed $\eps$, the bounds \eqref{ulc:kol}, \eqref{ulc:wass} and \eqref{ulc:md} are generally rate optimal by the same reasoning as above. In fact, when the coordinates of $X_1$ are i.i.d.~and follows the scaled Weibull distribution with scale parameter 1 and shape parameter $\beta\geq2$, then $X_1$ is $(\beta-1)\{\Gamma(1+2/\beta)-\Gamma(1+1/\beta)^2\}$-uniformly log-concave and its coordinates have non-zero skewness. 
We remark that the bound \eqref{eq:fk} is applicable in the setting of \cref{thm:ulc-clt}, but it leads to an extra $\log n$ factor compared to \eqref{ulc:kol} in the high-dimensional regime $\log n=O(\log d)$. 
Also, regarding the $p$-Wasserstein bound \eqref{ulc:wass}, the dependence on $\eps$ is improved compared to Theorem 3.4 in \cite{Fa19} when $\eps n\geq1$.

The remainder of the paper is organized as follows. 
In \cref{sec:prowass}, we formulate our new coupling inequalities and prove the main results stated in the introduction. 
We prove the coupling inequalities in \cref{sec:proof}. 
\cref{sec:md-max} gives the proof of an auxiliary result to establish the Cram\'er type moderate deviation result. 

\paragraph{Notations.}
For a random vector $\xi$ in $\mathbb R^d$ and $p>0$, we write $\norm{\xi}_p=(\E|\xi|^p)^{1/p}$. 
For a matrix $A$, $\norm{A}_{op}$ and $\norm{A}_{H.S.}$ denote the operator norm and the Hilbert-Schmidt norm of $A$, respectively. 
For two $d\times d$ matrices $A$ and $B$, we write $A\preceq B$ or $B\succeq A$ if $B-A$ is positive semidefinite. We write $\langle A,B\rangle_{H.S.}=\tr(A^\top B)$ for their Hilbert-Schmidt inner product.

\section{Projected Wasserstein bounds}\label{sec:prowass}

The proofs of the main results rely on the following ``projected'' Wasserstein bounds. 
\begin{theorem}\label{thm:lc}
Let $\mu$ be a centered log-concave probability measure on $\mathbb R^d$. 
Suppose that the covariance matrix $\Sigma$ of $\mu$ has rank $r\geq1$. 
Then, for any integer $n\geq1$, we can construct random vectors $W$ and $Z$ in $\mathbb R^d$ such that $W\overset{d}{=}n^{-1/2}\sum_{i=1}^nX_i$ with $X_i\overset{i.i.d.}{\sim}\mu$, $Z\sim N(0,\Sigma)$ and
\ben{\label{lc:prowass}
\|u\cdot(W-Z)\|_p\leq C|\Sigma^{1/2}u|\left(\sqrt{\varpi_r}\frac{p}{\sqrt n}
+\varpi_r\log(2r)\frac{p^{3/2}}{n}
+\frac{1}{\sqrt{\varpi_r\log (2r)}}\frac{p^{5/2}}{n}\right)
}
for all $u\in\mathbb R^d$ and $p\geq1$.
\end{theorem}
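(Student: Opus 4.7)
The plan is to construct the coupling via Eldan's stochastic localization applied to the law of $W$ and estimate the projected displacement $u\cdot(W-Z)$ using Burkholder--Davis--Gundy together with Poincar\'e-based concentration for log-concave tilted measures, in the spirit of the Stein-kernel approach to the multivariate CLT. The log-concavity provides the dimension-free Poincar\'e control, while the i.i.d.\ convolution structure of $W = n^{-1/2}\sum_i X_i$ must provide the extra factor $1/\sqrt n$ through a Stein-type identity.

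First I would reduce to the isotropic case. Restricting attention to $\mathrm{range}(\Sigma)$ and composing with $\Sigma^{1/2}$ reduces the problem to $\Sigma = I_r$, $d = r$; the factor $|\Sigma^{1/2}u|$ reappears through the inverse change of variable. Since convolution preserves log-concavity, the law $\nu$ of $W$ is itself isotropic log-concave with $\varpi(\nu) \leq \varpi_r$. I would then run the F\"ollmer process associated to $\nu$: a stochastic process $(Y_t)_{t\in[0,1]}$ driven by a standard $r$-dimensional Brownian motion $B$ with $Y_0 = 0$, $Y_1 \sim \nu$, and drift $v_t(x) = \nabla\log P_{1-t}\rho_\nu(x)$, where $\rho_\nu$ is the density of $\nu$ with respect to the standard Gaussian and $P_s$ is the heat semigroup. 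Setting $W := Y_1$ and $Z := B_1 \sim N(0, I_r)$ gives the coupling, and
\[
u\cdot(W-Z) \;=\; \int_0^1 u\cdot v_t(Y_t)\,dt.
\]
Since $v_t(Y_t)$ is a martingale with $v_0(0)=0$ (which follows from $\E[W]=\E[Z]=0$), a stochastic Fubini / It\^o integration by parts rewrites this quantity as the real-valued stochastic integral $\int_0^1 (1-s)\,u^\top \nabla v_s(Y_s)\,dB_s$. Burkholder--Davis--Gundy then reduces the $L^p$-norm estimate to an $L^{p/2}$-bound on the quadratic variation $\int_0^1 (1-s)^2 |(\nabla v_s(Y_s))^\top u|^2\,ds$, whose integrand can be related via standard heat-semigroup identities to the covariance matrices of tilted versions of $\nu$ that remain log-concave along the localization.

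The hard part, and the technical heart of the proof, is to improve the naive Poincar\'e bound $\|\nabla v_s\|_{op}\lesssim\sqrt{\varpi_r}$, which would only yield a dimension-free but $n$-independent estimate of order $\sqrt{\varpi_r}|u|$. Extracting the extra factor $1/\sqrt n$ requires exploiting the fact that $\nu$ is the law of a normalised i.i.d.\ sum: a Stein-kernel identity expresses $\nabla v_s(Y_s)$ as a conditional average of $n$ i.i.d.\ contributions, each controlled by the Poincar\'e constant of the corresponding tilted log-concave measure, so the pooled conditional variance concentrates at scale $\varpi_r/n$ rather than $\varpi_r$. Higher moments would be handled through a Rosenthal-type decomposition of the quadratic variation combined with log-concave concentration for Lipschitz functionals of the tilted measures. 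The $\log(2r)$ factors in the second and third error terms of \eqref{lc:prowass} will enter through the currently best operator-norm control of random log-concave covariances via the KLS estimate of \cite{KlLe22}, and the last $p^{5/2}/n$ correction will come from a separate treatment of the short time interval near $t = 1$, where $\nabla v_s$ may blow up and only the worst-case Poincar\'e bound is available. Balancing these three time-scales to recover precisely the three-term structure of \eqref{lc:prowass} is the most delicate bookkeeping step.
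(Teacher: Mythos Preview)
Your reduction to the isotropic case is correct and matches Step~2 of the paper. The genuine gap is in Step~1: you run the F\"ollmer process on $\nu=\mcl L(W)$ and then claim that ``a Stein-kernel identity expresses $\nabla v_s(Y_s)$ as a conditional average of $n$ i.i.d.\ contributions, each controlled by the Poincar\'e constant of the corresponding tilted log-concave measure.'' This step does not work as stated. The tilted measures $f_{1-s,Y_s}$ of $\nu$ are obtained by multiplying the density of $W$ by a Gaussian factor $e^{-|x-y|^2/(2(1-s))}$; since $|x|^2=|n^{-1/2}\sum_ix_i|^2$ contains cross-terms, this tilt does \emph{not} factor over the summands, so the resulting measure is not a product and there is no natural decomposition of $\Gamma_s=\Cov(f_{1-s,Y_s})/(1-s)$ as an average of $n$ conditionally i.i.d.\ pieces. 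Without that structure, your BDG/Rosenthal argument cannot deliver the $1/\sqrt n$ gain; you are left with the Poincar\'e bound $\|\Gamma_s-I_r\|_{op}\lesssim\varpi_r$, which is $n$-independent.

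The paper avoids this by a different architecture. It runs $n$ \emph{independent} F\"ollmer processes $Y^{(1)},\dots,Y^{(n)}$, one for each $X_i\sim\mu$, and stops them at a small time $\eps\asymp(\varpi_r\log(2r))^{-1}$. Conditionally on $(Y^{(i)}_\eps)_i$, the remainders $Y^{(i)}_1-m^{(i)}_\eps$ are independent and $\eps/(1-\eps)$-uniformly log-concave, so the Stein-kernel/score bound of \cref{prop:sk-wrap} applies and produces the first two terms of \eqref{lc:prowass}; here the i.i.d.\ structure survives because the tilt acts on each coordinate separately. The sums $n^{-1/2}\sum_im^{(i)}_\eps$ are coupled to a Gaussian by a separate martingale-embedding argument (\cref{me-bound}) on $[0,\eps]$ using matrix geometric means, which yields the $p^{5/2}/(n\sqrt{\varpi_r\log(2r)})$ term. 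So the third error term comes from the \emph{short} time regime $[0,\eps]$, not from near $t=1$ as you guessed, and the $\log(2r)$ factors enter through the choice of $\eps$ (needed for \cref{lem:kl} and \cref{gamma-lb}), not merely through operator-norm concentration of a single random covariance.
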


\begin{theorem}\label{thm:ulc}
Let $\mu$ be an isotropic probability measure on $\mathbb R^d$. 
Suppose that $\mu$ is $\eps$-uniformly log-concave for some $\eps>0$. 
Then, for any integer $n\geq1$, we can construct random vectors $W$ and $Z$ in $\mathbb R^d$ such that $W\overset{d}{=}n^{-1/2}\sum_{i=1}^nX_i$ with $X_i\overset{i.i.d.}{\sim}\mu$, $Z\sim N(0,I_d)$ and
\ben{\label{eq:ulc}
\|u\cdot(W-Z)\|_p\leq C|u|\left(\frac{p}{\sqrt{\eps n}}+\frac{p^{3/2}}{\eps n}\right)
}
for all $u\in\mathbb R^d$ and $p\geq1$.
\end{theorem}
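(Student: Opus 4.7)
The plan is to adapt the stochastic localization argument that proves \cref{thm:lc} by exploiting the stronger $\eps$-uniform log-concavity assumption. The key structural input is: if $\mu_0=\mu$ is $\eps$-uniformly log-concave with log density $-V$ satisfying $\Hess V\succeq\eps I_d$, then along Eldan's localization process $(\mu_t)_{t\geq 0}$ the tilted measure $\mu_t$ has log density of the form $-V(x)-t|x|^2/2+c_t\cdot x$ up to a normalizing constant, for some random adapted $c_t\in\mathbb R^d$. Consequently $\mu_t$ is $(\eps+t)$-uniformly log-concave for every $t\geq 0$, and the Brascamp--Lieb / Bakry--\'Emery inequality yields
\[
\varpi(\mu_t)\leq\frac{1}{\eps+t},\qquad t\geq 0.
\]
This is strictly sharper than the uniform bound $\varpi(\mu_t)\leq\varpi_r$ used to prove \cref{thm:lc}: both $1/\eps$ replaces $\varpi_r$ and, more importantly, the bound decays in $t$.

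With this estimate in hand, I would replay the stochastic-localization-plus-Stein argument of \cref{thm:lc} while substituting the time-dependent bound $1/(\eps+t)$ for $\varpi_r$ wherever the latter appears inside a time integral. The resulting integrals satisfy $\int_0^T \frac{dt}{\eps+t}=\log(1+T/\eps)$ and $\int_0^T\frac{dt}{(\eps+t)^2}\leq 1/\eps$, each bounded independently of $T$ (up to a logarithm in the first). In the proof of \cref{thm:lc} one is forced to take $T\asymp\varpi_r\log(2r)$ so that $\mu_T$ is sufficiently degenerate, and the $\log(2r)$ factor in \cref{lc:prowass} is a direct consequence of this choice. Here, since $\mu_t$ is already $(\eps+t)$-uniformly log-concave and hence concentrates at rate $1/\sqrt{\eps+t}$ automatically, I can take $T=\infty$ (or any $T$ of order $1/\eps$) and collect the resulting estimates without incurring any $\log(2r)$ penalty.

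The main obstacle is verifying that the $p^{5/2}/n$ contribution present in \cref{lc:prowass} gets absorbed into the other terms in the uniformly log-concave regime. In \cref{thm:lc}, this third term arises from balancing a quantity involving $\int_0^T\varpi(\mu_t)^{-1}\,dt$ against the terminal-time error, which forces a factor $1/\sqrt{\varpi_r\log(2r)}$. Under the new bound the analogous stochastic integral is controlled by $\int_0^\infty (\eps+t)^{-2}\,dt=1/\eps$, and combining with $p\geq 1$ should show that this term is dominated by the $p^{3/2}/(\eps n)$ contribution and may therefore be dropped. Once this absorption step is carried out, using the isotropy $|\Sigma^{1/2}u|=|u|$ gives \cref{eq:ulc}. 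The remaining verification is routine bookkeeping: each stochastic integral estimate in \cref{thm:lc} obtained via Burkholder--Davis--Gundy-type $L^p$ inequalities goes through unchanged with the improved pointwise-in-$t$ Poincar\'e constant.
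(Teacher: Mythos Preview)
Your plan takes a long detour that the paper avoids entirely. The paper does \emph{not} prove \cref{thm:ulc} by adapting the stochastic localization argument of \cref{thm:lc}; in fact the logical order is the reverse. Since $\mu$ is already $\eps$-uniformly log-concave, the Stein kernel machinery of \cref{prop:sk-wrap} applies directly: by \cref{key1} the Stein kernel $\tau_\mu$ satisfies $\|\tau_\mu\|_{op}\leq\eps^{-1}$, and by Brascamp--Lieb $\varpi(\mu)\leq\eps^{-1}$. Plugging these into \cref{ulc-bound} (via \cref{prop:sk-wrap} with $A=I_d$ and $f_y\equiv f$) immediately gives
\[
\|u\cdot(W-Z)\|_p\leq \frac{C|u|\sqrt p}{n}\left(\sqrt{pn/\eps}+\frac{p}{\eps}\right),
\]
which is exactly \cref{eq:ulc}. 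A short Gaussian smoothing plus tightness argument removes the smoothness/positivity assumption on the density. That is the entire proof. Stochastic localization only enters later, in \cref{thm:lc}, precisely to manufacture uniform log-concavity where none is assumed.

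Your description of the \cref{thm:lc} proof is also not quite accurate, which undermines confidence in the adaptation you propose. There are no running time-integrals of Poincar\'e constants of the form $\int_0^T(\eps+t)^{-1}dt$; rather a single stopping time $\eps=(c_0\varpi_d\log(2d))^{-1}$ is fixed, the martingale part up to that time is handled by \cref{me-bound}, and the remainder is handled by \cref{prop:sk-wrap} applied to the conditionally $\eps/(1-\eps)$-uniformly log-concave terminal measure. The $\log(2r)$ factor and the $p^{5/2}/n$ term arise from this fixed choice and from the sub-exponential moment bound of \cref{lem:kl}, not from a time integral you can improve by letting $T\to\infty$. So your ``absorption'' argument for eliminating the $p^{5/2}/n$ term is not grounded in the actual structure of the proof, and would require a separate analysis (e.g.\ exploiting a \emph{deterministic} bound $\|\Gamma_t\|_{op}\leq((1-t)\eps+t)^{-1}$ to sharpen \cref{me-bound}). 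That may well succeed, but it is not the ``routine bookkeeping'' you describe, and in any case is unnecessary given the two-line direct proof.
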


We prove these theorems in the next section. 
\begin{remark}
It would be worth mentioning that \cref{thm:lc} follows once we prove the corresponding bound for $UW$ with a $d\times d$ orthogonal matrix $U$. This feature allows us to reduce the proof of \cref{thm:lc} to the case $\Sigma=I_d$. By contrast, such reduction is generally impossible if we directly bound the left hand side of \eqref{eq:lc-kol} as in \cite{FaKo21} because the class of rectangles are not rotationally invariant.   
\end{remark}
In the remainder of this section, we prove the main results stated in the introduction using these coupling inequalities. 
Below we will frequently use the inequality $\varpi_r\geq\varpi(N(0,I_r))=1$ without reference. 
\begin{proof}[Proof of \cref{thm:lc-kol}]
For two vectors $x,y\in\mathbb R^d$, we write $x\leq y$ if $x_j\leq y_j$ for all $j=1,\dots,d$. Then we have
\[
\sup_{A\in\mcl R}|P(W\in A)-P(Z\in A)|=\sup_{x\in\mathbb R^{2d}}|P((W^\top,-W^\top)^\top\leq x)-P((Z^\top,-Z^\top)^\top\leq x)|.
\]
Also, the covariance matrix of $(W^\top,-W^\top)^\top$ has rank $r$. Moreover, $(X_i^\top,-X_i^\top)^\top$ are log-concave by Proposition 3.1 in \cite{SaWe14}. Consequently, it suffices to prove
\ben{\label{aim:lc-kol}
\sup_{x\in\mathbb R^{d}}|P(W\leq x)-P(Z\leq x)|\leq C\sqrt{\varpi_r\frac{\log^2(dn)\log(2d)}{n}}
}
when $d\geq2$. Also, since the left hand side is bounded by 1, we may assume 
\ben{\label{wlog:lc-kol}
\varpi_r\frac{\log^2(dn)\log(2d)}{n}\leq1
}
without loss of generality. 

Next, if $\E W_j^2=0$ for some $j$, then $W_j=Z_j=0$ a.s. Hence, with $\mcl J=\{j\in\{1,\dots,d\}:\E W_j^2\neq0\}$, we have
\[
\sup_{x\in\mathbb R^{d}}|P(W\leq x)-P(Z\leq x)|=\sup_{x\in\mathbb R^{d'}}|P((W_j)_{j:\in\mcl J}\leq x)-P((Z_j)_{j\in\mcl J}\leq x)|,
\]
where $d'$ is the number of elements in $\mcl J$. 
Also, the covariance matrix of $(X_{ij})_{j\in\mcl J}$ has rank $r$. 
Moreover, $(X_{ij})_{j\in\mcl J}$ are log-concave by Proposition 3.1 in \cite{SaWe14}. 
Consequently, without loss of generality, we may assume $\mcl J=\{1,\dots,d\}$, i.e.~$\sigma_j=\sqrt{\E W_j^2}>0$ for all $j=1,\dots,d$. 

Again without loss of generality, we may assume that $W$ and $Z$ are the same as in \cref{thm:lc}. 
Fix $x\in\mathbb R^d$ arbitrarily and set 
\[
W^\vee:=\max_{1\leq j\leq d}\frac{W_j-x_j}{\sigma_j},\qquad
Z^\vee:=\max_{1\leq j\leq d}\frac{Z_j-x_j}{\sigma_j}.
\] 
Then we have
\[
P(W\leq x)-P(Z\leq x)=P(W^\vee\leq 0)-P(Z^\vee\leq0).
\]
Let $e_1,\dots,e_d$ be the standard basis of $\mathbb R^d$. For every $j=1,\dots,d$, we apply the bound \eqref{lc:prowass} with $u=e_j/\sigma_j$ and then obtain
\be{
\left\|\frac{W_j-Z_j}{\sigma_j}\right\|_p\leq C\frac{|\Sigma^{1/2}e_j|}{\sigma_j}\left(\sqrt{\varpi_r}\frac{p}{\sqrt n}
+\varpi_r\log(2r)\frac{p^{3/2}}{n}
+\frac{1}{\sqrt{\varpi_r\log (2r)}}\frac{p^{5/2}}{n}\right)
}
for any $p\geq1$. Observe that
\[
\frac{|\Sigma^{1/2}e_j|^2}{\sigma_j^2}
=\frac{e_j^\top\Sigma e_j}{\sigma_j^2}
=1.
\]
Further, let $p=\log(nd)\geq1$. Then
\ba{
\varpi_r\log(2r)\frac{p^{3/2}}{n}
\leq\varpi_r\log(2d)\frac{p^{3/2}}{n}
=\sqrt{\varpi_r\frac{\log^2(dn)}{n}}\sqrt{\varpi_r\frac{\log(dn)\log^2(2d)}{n}}
\leq\sqrt{\varpi_r\frac{\log^2(dn)}{n}},
}
where we used \eqref{wlog:lc-kol} for the last inequality. In addition, note that
\[
\frac{\log^3(dn)}{n}\leq4\frac{\log^3d+\log^3n}{n}\leq4\{1+(3/e)^3\}<36,
\]
where we used \eqref{wlog:lc-kol} and the elementary inequality $\log n\leq (3/e)n^{1/3}$ in the second inequality. Hence we have
\ba{
\frac{\sqrt{\log 2}}{\sqrt{\varpi_r\log (2r)}}\frac{p^{5/2}}{n}
\leq\frac{p^{5/2}}{n}
=\sqrt{\frac{\log^2(dn)}{n}}\sqrt{\frac{\log^3(dn)}{n}}
\leq 6\sqrt{\varpi_r\frac{\log^2(dn)}{n}}.
}
Therefore, there exists a positive universal constant $C_0>0$ such that
\be{
\max_{1\leq j\leq d}\left\|\frac{W_j-Z_j}{\sigma_j}\right\|_p\leq C_0\sqrt{\varpi_r\frac{\log^2(dn)}{n}}.
}
Also, for any $\eta>0$,
\ba{
P(|W^\vee-Z^\vee|>\eta)
\leq\eta^{-p}\E\max_{1\leq j\leq d}\left|\frac{W_j-Z_j}{\sigma_j}\right|^p
\leq\eta^{-p}\sum_{j=1}^d\E\left|\frac{W_j-Z_j}{\sigma_j}\right|^p
\leq d\eta^{-p}\max_{1\leq j\leq d}\E\left|\frac{W_j-Z_j}{\sigma_j}\right|^p.
}
Therefore, taking $\eta=eC_0\sqrt{\varpi_r\log^2(dn)/n}$, we obtain
\[
P(|W^\vee-Z^\vee|>\eta)\leq de^{-p}=\frac{1}{n}.
\]
Thus, by Lemma 2.1 in \cite{CCK16}, 
\[
|P(W\leq x)-P(Z\leq x)|\leq\sup_{t\in\mathbb R}P(|Z^\vee-t|\leq\eta)+\frac{1}{n}.
\]
Observe that
\ba{
P(|Z^\vee-t|\leq\eta)=P(t-\eta\leq Z^\vee\leq t+\eta)=P(Z^\vee\leq (t-\eta)+2\eta)-P(Z^\vee<t-\eta).
}
Thus, by Nazarov's inequality (cf.~\cite{CCK17nazarov}),
\[
\sup_{t\in\mathbb R}P(|Z^\vee-t|\leq\eta)
\leq2\eta(\sqrt{2\log d}+2)
\leq 8\sqrt{\log(2d)}\eta
\leq C\sqrt{\varpi_r\frac{\log^2(dn)\log(2d)}{n}}.
\]
All together, we obtain \eqref{aim:lc-kol}. 
\end{proof}

\begin{proof}[Proof of \cref{thm:lc-wass}]
Without loss of generality, we may assume that $W$ and $Z$ are the same as in \cref{thm:lc}. 
Also, thanks to Jensen's inequality, it suffices to consider the case $p\geq2$. 
Let $e_1,\dots,e_d$ be the standard basis of $\mathbb R^d$. For every $j=1,\dots,d$, we apply the bound \eqref{lc:prowass} with $u=e_j$ and then obtain
\ben{\label{eq:applied}
\|W_j-Z_j\|_p\leq C\sigma_{j}\left(\sqrt{\varpi_r}\frac{p}{\sqrt n}
+\varpi_r\log(2r)\frac{p^{3/2}}{n}
+\frac{1}{\sqrt{\varpi_r\log (2r)}}\frac{p^{5/2}}{n}\right),
}
where we used the identity $|\Sigma^{1/2}e_j|^2=e_j^\top\Sigma e_j=\sigma_{j}^2$. Since 
\ba{
\mcl W_p(W,Z)\leq\|W-Z\|_p\leq\sqrt{\sum_{j=1}^d\|W_j-Z_j\|_p^2},
}
we obtain \eqref{lc:wass}. 

To prove \eqref{lc:wass-simple}, we may assume $p\leq\sqrt n$ without loss of generality. In fact, since $W$ is log-concave by Proposition 3.5 in \cite{SaWe14}, we have by the reverse H\"older inequality (see e.g.~Proposition A.5 in \cite{AGB15})
\ba{
\|W\|_p\leq Cp\E|W|\leq Cp\sqrt{\E|W|^2}=Cp\sqrt{\tr(\Sigma)}. 
}
Also, since $Z$ is Gaussian, we have $\|Z\|_p\leq C\sqrt{p\tr(\Sigma)}$ (cf.~Lemma 6.3 in \cite{FaKo22}). Hence $\|W-Z\|_p\leq Cp\sqrt{\tr(\Sigma)}$. Therefore, if $p>\sqrt n$, the right hand side of \eqref{lc:wass-simple} dominates $C'p\sqrt{\tr(\Sigma)}$, so \eqref{lc:wass-simple} trivially holds with appropriate choice of $C'$. Under the assumptions $p\leq\sqrt n$ and $\varpi_r\log^2(2r)\leq c\sqrt n$, \eqref{lc:wass-simple} immediately follows from \eqref{lc:wass}.
\end{proof}

For the proof of \cref{thm:lc-md}, we use the following general result to derive a Cram\'er type moderate deviation from projected $p$-Wasserstein bounds:
\begin{proposition}\label{md-max}
Let $W$ be a random vector in $\mathbb R^d$ and $Z$ a Gaussian vector in $\mathbb R^d$ with mean 0 and covariance matrix $\Sigma$ such that $\sigma_j>0$ for all $j=1,\dots,d$. Suppose that
\ben{\label{wass-bound-comp}
\max_{1\leq j\leq d}\|W_j-Z_j\|_p\leq Ap^\alpha\Delta\quad\text{for all }1\leq p\leq p_0
}
and
\ben{\label{p0-bound}
\log d+|\log (\Delta/\ul\sigma)|\leq p_0/2
}
with some constants $\alpha\geq0$, $A>0$, $p_0\geq1$ and $\Delta>0$. 
Define $\ol\sigma$ and $\ul\sigma$ as in \eqref{def:sigma}. 
Assume also $\Delta(\log d)^{\alpha+1/2}\leq B\ul\sigma$ for some constant $B>0$. 
Then there exists a positive constant $C$ depending only on $\alpha,A$ and $B$ such that
\ben{\label{eq:mdp-max}
\left|\frac{P(\max_{1\leq j\leq d}W_j>x)}{P(\max_{1\leq j\leq d}Z_j>x)}-1\right|\leq C\left(1+\frac{x}{\ul\sigma}\right)\left(1+\log d+\left|\log\left(\frac{\Delta}{\ul\sigma}\right)\right|+\frac{x^2}{\ol\sigma^2}\right)^{\alpha}\frac{\Delta}{\ul\sigma}
}
for all $0\leq x\leq \min\{\ul\sigma(\Delta/\ul\sigma)^{-1/(2\alpha+1)},\ol\sigma\sqrt{p_0/2}\}$. 
\end{proposition}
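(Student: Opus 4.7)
The plan is to convert the coupling hypothesis \eqref{wass-bound-comp} into a tail comparison between $M_W := \max_j W_j$ and $M_Z := \max_j Z_j$ via a standard sandwich argument combined with an anti-concentration inequality for the Gaussian maximum.

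\textbf{Step 1 (Sandwich reduction).} For any $\eta > 0$, the inclusion
\[
\{M_W > x\} \subseteq \{M_Z > x - \eta\} \cup \{\max_j (W_j - Z_j) > \eta\}
\]
and its symmetric counterpart give
\[
\left|\frac{P(M_W > x)}{P(M_Z > x)} - 1\right| \leq \frac{P(M_Z > x - \eta) - P(M_Z > x + \eta)}{P(M_Z > x)} + \frac{P\bigl(\max_j |W_j - Z_j| > \eta\bigr)}{P(M_Z > x)}.
\]
The task is then to choose $\eta > 0$ and the auxiliary exponent $p \in [1, p_0]$ from \eqref{wass-bound-comp} so that each term matches the right-hand side of \eqref{eq:mdp-max}.

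\textbf{Step 2 (Coupling-error term).} Markov's inequality and a union bound applied to \eqref{wass-bound-comp} yield $P(\max_j |W_j - Z_j| > \eta) \leq d(Ap^\alpha\Delta/\eta)^p$, which becomes $de^{-p}$ for the optimized choice $\eta := eAp^\alpha\Delta$. Using the Mills-ratio lower bound $P(M_Z > x) \geq \bar\Phi(x/\ol\sigma) \geq c(1+x/\ol\sigma)^{-1}e^{-x^2/(2\ol\sigma^2)}$, the ratio $de^{-p}/P(M_Z > x)$ is seen to be $\leq C(1 + x/\ul\sigma)\Delta/\ul\sigma$ as soon as
\[
p \asymp 1 + \log d + |\log(\Delta/\ul\sigma)| + x^2/\ol\sigma^2.
\]
This $p$ lies in $[1, p_0]$ precisely by \eqref{p0-bound} combined with the restriction $x \leq \ol\sigma\sqrt{p_0/2}$ imposed in the conclusion. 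The corresponding $\eta$ is of the order $(1 + \log d + |\log(\Delta/\ul\sigma)| + x^2/\ol\sigma^2)^\alpha \Delta$ appearing in \eqref{eq:mdp-max}, and a short computation exploiting $x \leq \ul\sigma(\Delta/\ul\sigma)^{-1/(2\alpha+1)}$ together with the hypothesis $\Delta(\log d)^{\alpha+1/2} \leq B\ul\sigma$ shows that $\eta(1 + x/\ul\sigma) \leq c_0\ul\sigma$ for a constant $c_0$ depending only on $\alpha, A, B$.

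\textbf{Step 3 (Anti-concentration term).} The heart of the argument is the hazard-rate bound
\[
h_{M_Z}(t) := \frac{f_{M_Z}(t)}{P(M_Z > t)} \leq \frac{C(1 + t/\ul\sigma)}{\ul\sigma}, \qquad t \geq 0.
\]
Granting this, the identity $\frac{d}{dt}\log P(M_Z > t) = -h_{M_Z}(t)$ and integration over $[x-\eta, x+\eta]$ give
\[
\frac{P(M_Z > x - \eta) - P(M_Z > x + \eta)}{P(M_Z > x)} \leq e^{\int_{x-\eta}^{x+\eta}h_{M_Z}} - e^{-\int_{x-\eta}^{x+\eta}h_{M_Z}} \leq \frac{C\eta(1+x/\ul\sigma)}{\ul\sigma},
\]
where the last step uses the bound $\eta(1+x/\ul\sigma) \leq c_0\ul\sigma$ established in Step 2. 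Substituting the chosen $\eta$ produces exactly the shape of \eqref{eq:mdp-max}.

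\textbf{Main obstacle.} The principal technical difficulty is the hazard-rate estimate in Step 3; the other pieces are essentially bookkeeping. A natural route uses Ehrhard's inequality applied to the convex sets $K_t := \{z \in \mathbb R^d : z_j \leq t,\ \forall j\}$, which satisfy $\lambda K_{t_1} + (1-\lambda)K_{t_2} \subseteq K_{\lambda t_1 + (1-\lambda)t_2}$, to conclude that $t \mapsto \Phi^{-1}(P(M_Z \leq t))$ is concave; this concavity converts, via standard Mills-ratio estimates, into the required bound on $h_{M_Z}$. An alternative is to split into the regimes $t \leq \ul\sigma$ and $t > \ul\sigma$ and to analyze the conditional-density representation $f_{M_Z}(t) = \sum_j f_{Z_j}(t) P(Z_k \leq t,\,k \neq j \mid Z_j = t)$, identifying a dominating coordinate so as to cancel a seemingly unavoidable factor of $d$ in the ratio $f_{M_Z}(t)/P(M_Z > t)$. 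Once this hazard-rate lemma is in place, Steps 1--3 assemble the advertised bound \eqref{eq:mdp-max}.
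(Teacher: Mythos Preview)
Your overall architecture---sandwich reduction, Markov bound on the coupling error with $\eta=eAp^\alpha\Delta$ and $p\asymp\log d+|\log(\Delta/\ul\sigma)|+x^2/\ol\sigma^2$, then an anti-concentration bound for $M_Z$---is exactly what the paper does. The only substantive difference is in how the hazard-rate estimate of Step~3 is obtained. The paper does \emph{not} use Ehrhard's inequality; instead it proves a dedicated lemma (\cref{lem:gmax-tail}) by first passing to the standardized variables $\tilde Z_j=(Z_j-x)/\sigma_j+x/\ul\sigma$, which all have unit variance and nonnegative means, and then invoking the structural result of \cite{CCK15} (their Lemmas~5--6) that the density of $\max_j\tilde Z_j$ factors as $\phi(z)G(z)$ with $G$ nondecreasing. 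The monotonicity of $G$ immediately gives $\phi(z)G(z)\leq\frac{\phi(z)}{\bar\Phi(z)}P(\max_j\tilde Z_j>z)$, and Birnbaum's inequality $\phi(z)/\bar\Phi(z)\leq1+z$ finishes. This is arguably more direct than your two proposed routes, and it explains where the factor $(1+x/\ul\sigma)/\ul\sigma$ comes from: the $\ul\sigma$ enters through the rescaling, not through any spectral argument. That said, your Ehrhard route also works and in fact yields the slightly sharper hazard bound $h_{M_Z}(t)\leq(1+t/\ol\sigma)/\ol\sigma$: concavity of $F(t)=\Phi^{-1}(P(M_Z\leq t))$ together with the one-coordinate comparison $F(s)\leq s/\ol\sigma$ gives $F'(t)\leq\limsup_{s\to\infty}(F(s)-F(t))/(s-t)\leq1/\ol\sigma$, and then $h_{M_Z}(t)=\frac{\phi(F(t))}{\bar\Phi(F(t))}F'(t)\leq(1+F(t)_+)/\ol\sigma$. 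So your sketch is correct; the paper simply takes a different (and citation-ready) path to the same key lemma. One minor omission: the paper handles the regime $\Delta/\ul\sigma\geq1/e$ separately (so that $p\geq1$ is guaranteed in the main case); you should add this bookkeeping step.
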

The proof of this proposition is given in \cref{sec:md-max}. 
This result can be seen as a multi-dimensional extension of Theorem 2.1 in \cite{FaKo22} in terms of maxima, and it will be of independent interest. See Theorem 4.2 in \cite{FaKo22} for another multi-dimensional extension in terms of Euclidean norms. 
\begin{remark}
In practice, the parameters $A,p_0$ and $\Delta$ in \cref{md-max} will be determined in the following way. 
First, to deduce a meaningful bound from \cref{md-max}, we need to set $\Delta$ to a small value. Then, to make \eqref{p0-bound} hold, we need to take $p_0$ sufficiently large. However, as $p_0$ increases, we need to take $A\Delta$ large enough to make \eqref{wass-bound-comp} hold. The adjustment by $A$ in \eqref{wass-bound-comp} is useful to accomplish the last purpose. 
\end{remark}

\begin{proof}[Proof of \cref{thm:lc-md}]
As in the proof of the previous results, we may assume that $W$ and $Z$ are the same as in \cref{thm:lc}. Then, by \eqref{eq:applied},
\[
\max_{1\leq j\leq d}\|W_j-Z_j\|_p\leq C\ol\sigma\left(\sqrt{\varpi_r}\frac{p}{\sqrt n}
+\varpi_r\log(2r)\frac{p^{3/2}}{n}
+\frac{1}{\sqrt{\varpi_r\log (2r)}}\frac{p^{5/2}}{n}\right)
\]
for any $p\geq1$. Hence, with $\alpha=1$, $p_0=2\min\{n/(\varpi_r\log^2(2r)),n^{1/3}\}$ and $\Delta=\ol\sigma\sqrt{\varpi_r/n}$, we have \eqref{wass-bound-comp} for some universal constant $A$. 
Now assume $\frac{\ol\sigma^2\varpi_r\log^3(3d)}{\ul\sigma^2n}\leq 1$. Then 
\ben{\label{eq:p0-lb}
p_0= 2\min\left\{(n/\varpi_r)^{1/3}\left(n/(\varpi_r\log^3(2r))\right)^{2/3},n^{1/3}\right\}
\geq2(n/\varpi_r)^{1/3}.
}
Since $\Delta/\ul\sigma=\sqrt{\ol\sigma^2\varpi_r/(\ul\sigma^2n)}\leq1$, we have
\ba{
|\log(\Delta/\ul\sigma)|\leq 3(\ul\sigma/\Delta)^{1/3}
\leq 3\left(\frac{n}{\varpi_r}\right)^{1/6}\leq3\left(\frac{n}{\varpi_r}\right)^{1/3}\left(\frac{\ol\sigma^2\varpi_r\log^3(3d)}{\ul\sigma^2n}\right)^{1/6}.
}
Therefore, if
\[
\left(\frac{\ol\sigma^2\varpi_r\log^3(3d)}{\ul\sigma^2n}\right)^{1/6}\leq \frac{1}{6},
\]
then
\[
\log d=n^{1/3}\left(\frac{\log^3d}{n}\right)^{1/3}
\leq \left(\frac{n}{\varpi_r}\right)^{1/3}\left(\frac{\ol\sigma^2\varpi_r\log^3(3d)}{\ul\sigma^2n}\right)^{1/3}
\leq \left(\frac{n}{\varpi_r}\right)^{1/3}\frac{1}{36}\leq\frac{1}{2}\left(\frac{n}{\varpi_r}\right)^{1/3}.
\]
Further, 
\[
|\log(\Delta/\ul\sigma)|
\leq 3\left(\frac{n}{\varpi_r}\right)^{1/3}\frac{1}{6}=\frac{1}{2}\left(\frac{n}{\varpi_r}\right)^{1/3}.
\]
Hence we obtain
\ba{
\log d+|\log (\Delta/\ul\sigma)|\leq \left(\frac{n}{\varpi_r}\right)^{1/3}\leq \frac{p_0}{2},
}
where the last inequality follows by \eqref{eq:p0-lb}. 
In this case we also have 
\ba{
\Delta(\log d)^{3/2}&=\sqrt{\frac{\ol\sigma^2\varpi_r\log^3d}{n}}\leq \ul\sigma
}
and
\ba{
\log d+\left|\log\left(\frac{\Delta}{\ul\sigma}\right)\right|
=\log\left(\frac{d\ul\sigma\sqrt n}{\ol\sigma\sqrt{\varpi_r}}\right)
\leq\log(dn).
}
In addition, 
\[
\ul\sigma\left(\frac{\ul\sigma^2n}{\ol\sigma^2\varpi_r}\right)^{1/6}=\ul\sigma(\Delta/\ul\sigma)^{-1/3}=(\ul\sigma^4/\ol\sigma)^{1/3}(n/\varpi_r)^{1/6}\leq\ol\sigma(n/\varpi_r)^{1/6}\leq\ol\sigma\sqrt{p_0/2},
\] 
where the last inequality follows by \eqref{eq:p0-lb}. 
Combining these estimates and \cref{md-max} with $B=1$ gives the desired result. 
\end{proof}

\begin{proof}[Proof of \cref{thm:ulc-clt}]
Most parts of the proof are almost the same as the corresponding parts of Theorems \ref{thm:lc-kol}--\ref{thm:lc-md}, so we only describe necessary changes. Without loss of generality, we may assume that $W$ and $Z$ are the same as those in \cref{thm:ulc}. We also assume $d>1$ because the results for $d=1$ follow from standard ones. 

First, to prove \eqref{ulc:kol}, we may assume $\log^2(dn)/(n\eps)\leq1$ without loss of generality. Then, with $p=\log(dn)$, we have $\max_{j}\|W_j-Z_j\|_p\leq C_0p/\sqrt{n\eps}$ for some universal constant $C_0>0$. Now, fix $A=\prod_{j=1}^d[a_j,b_j]\in\mcl R$ and set
\[
W^\vee:=\max_{1\leq j\leq d}\{(a_j-W_j)\vee(W_j-b_j)\},\qquad
Z^\vee:=\max_{1\leq j\leq d}\{(a_j-Z_j)\vee(Z_j-b_j)\}.
\]
Then we have
\[
|P(W\in A)-P(Z\in A)|=|P(W^\vee\leq0)-P(Z^\vee\leq0)|\leq\sup_{t\in\mathbb R}P(|Z^\vee-t|\leq\eta)+P(|W^\vee-Z^\vee|>\eta)
\]
for any $\eta>0$ by Lemma 2.1 in \cite{CCK16}. Since $P(|W^\vee-Z^\vee|>\eta)\leq d\eta^{-p}\max_{j}\E|W_j-Z_j|^p$, we can deduce \eqref{ulc:kol} similarly to the proof of \cref{thm:lc} by choosing $\eta=eC_0\log(dn)/\sqrt{n\eps}$. 

Next, the proof of \eqref{ulc:wass} is a straightforward modification of that of \cref{thm:lc-wass}. 

Finally, to prove \eqref{ulc:md}, we may assume $\eps n\geq1$ because we take $c=1/2$ for \eqref{ulc:md-ass}. Observe that $\ol\sigma=\ul\sigma=1$ in the present setting. Then, we can verify that \eqref{wass-bound-comp} and \eqref{p0-bound} hold with $\alpha=1,p_0=2\eps n,\Delta=1/\sqrt{\eps n}$ and some universal constant $A>0$. Also, the condition $\Delta(\log d)^{3/2}\leq B$ holds with $B=\sqrt c$ by \eqref{ulc:md-ass}. Thus, \eqref{ulc:md} follows from \cref{md-max}.
\end{proof}

\section{Proof of Theorems \ref{thm:lc} and \ref{thm:ulc}}\label{sec:proof}

Given a probability density function $q$ on $\mathbb R^d$, we write $\Cov(q)=\Cov(X)$, where $X$ is a random vector in $\mathbb R^d$ with density $q$. 

%
%
%

\subsection{Score bound}

The proof of \cref{thm:ulc} uses an analog of the so-called Stein kernel method to get a $p$-Wasserstein bound. This method relies on a score-based bound for the Wasserstein distance due to \cite{OV00} (see e.g.~Eq.(3.8) of \cite{LNP15}), so we first develop a projected Wasserstein version of this bound. For later use, we develop such a bound for Markov kernels. 
Since \cite{OV00}'s proof is not constructive, it causes a measurability issue when applied to Markov kernels. To avoid this difficulty, we construct an explicit coupling using the so-called \textit{F\"ollmer process}. We refer to \cite{ElMi20} for background of the F\"ollmer process. 

We fix a standard Gaussian vector $G$ in $\mathbb R^d$ independent of everything else. 
For a random vector $W$ in $\mathbb R^d$ and $t\in[0,1)$, we set $W[t]:=\sqrt t W+\sqrt{1-t}G$. 
It is straightforward to check that the law of $W[t]$ has a smooth density $f_{W[t]}$ with respect to $N(0,I_d)$. Moreover, $f_{W[t]}$ is strictly positive by Lemma 3.1 of \cite{JoSu01}. Therefore, we can define the \textit{score} of $W[t]$ with respect to $N(0,I_d)$ by $\rho_{W[t]}(w)=\nabla \log f_{W[t]}(w)$, $w\in\mathbb{R}^d$.

\begin{proposition}\label{score-bound}
Let $\mathcal P$ be a Markov kernel from a measurable space $(\mcl X,\mcl A)$ to $\mathbb R^d$. 
Suppose that $\mcl P(x,\cdot)$ has a smooth density $f^x$ with respect to $N(0,I_d)$ for all $x\in\mcl X$. 
Then, there exists a Markov kernel $\mcl Q$ from $(\mcl X,\mcl A)$ to $\mathbb R^d\times\mathbb R^d$ satisfying the following conditions for any $x\in\mcl X$:
\begin{enumerate}[label=(\roman*)] 

\item For any Borel set $A$ in $\mathbb R^d$, $\mcl Q(x,A\times\mathbb R^d)=\mcl P(x,A)$ and $\mcl Q(x,\mathbb R^d\times A)=N(0,I_d)(A)$.

\item If $W$ and $Z$ are random vectors in $\mathbb R^d$ such that $(W,Z)\sim\mcl Q(x,\cdot)$, then
\ben{\label{eq:score-bound}
\|u\cdot(W-Z)\|_p\leq\int_0^1 \frac{1}{\sqrt t}\|u\cdot\rho_{W[t]}(W[t])\|_pdt
} 
for any $p\geq1$ and $u\in\mathbb R^d$.

\end{enumerate}
\end{proposition}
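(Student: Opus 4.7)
The plan is to construct the coupling explicitly via the F\"ollmer drift process attached to each measure $\mcl P(x,\cdot)$ and then identify the drift with a rescaled score of $W[s]$, so as to avoid the nonconstructive transport argument of \cite{OV00}. Fix $x\in\mcl X$, set $\mu:=\mcl P(x,\cdot)$, and denote by $g^x$ its Lebesgue density (so $f^x=g^x/\phi$, where $\phi$ is the standard Gaussian density). On a probability space carrying a standard Brownian motion $B=(B_t)_{t\in[0,1]}$ on $\IR^d$, I would form
\[
M_s^x(y):=\int f^x(z)\,\phi_{1-s}(z-y)\,dz,\qquad v_s^x(y):=\nabla_y\log M_s^x(y),
\]
with $\phi_a$ the density of $N(0,aI_d)$, and solve the F\"ollmer SDE $dY_s^x=v_s^x(Y_s^x)\,ds+dB_s$, $Y_0^x=0$. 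Under the tilted measure $Q^x$ with $dQ^x/dP=f^x(B_1)$, Girsanov's theorem gives $B_1\sim\mu$ under $Q^x$ and makes $\tilde B_t^x:=B_t-\int_0^t v_s^x(B_s)\,ds$ a $Q^x$-Brownian motion; strong uniqueness of the SDE then forces the $P$-law of $(B,Y^x)$ to coincide with the $Q^x$-law of $(\tilde B^x,B)$. Declaring $\mcl Q(x,\cdot)$ to be the $P$-law of $(Y_1^x,B_1)$ immediately produces the marginals $\mu$ and $N(0,I_d)$ required by (i).

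The crucial algebraic step will be the identity
\[
v_s^x(y)=\frac{1}{\sqrt s}\,\rho_{W[s]}(y/\sqrt s),
\]
where $W\sim\mu$ and $W[s]=\sqrt s W+\sqrt{1-s}G$ with $G\sim N(0,I_d)$ independent. To see it, I would complete the square in $z$ inside $\phi_{1-s}(z-y)/\phi(z)$ to rewrite $M_s^x(y)=(2\pi)^{d/2}e^{|y|^2/(2s)}h_s^x(y/\sqrt s)$ for $h_s^x$ the Lebesgue density of $W[s]$; differentiating in $y$, the $y/s$ terms cancel after using $\rho_{W[s]}(w)=\nabla\log h_s^x(w)+w$. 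Separately, I would verify the distributional fact $Y_s^x/\sqrt s\eqlaw W[s]$ under $P$: by the $P$/$Q^x$ equivalence above, the $P$-law of $Y_s^x$ equals the $Q^x$-law of $B_s$, and under $Q^x$ the Brownian-bridge conditional $B_s\mid B_1\sim N(sB_1,s(1-s)I_d)$ combined with $B_1\sim\mu$ gives $B_s\eqlaw sW+\sqrt{s(1-s)}\,N$ for an independent standard Gaussian $N$, which rescales to $W[s]$.

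Combining these two ingredients yields
\[
u\cdot(Y_1^x-B_1)=\int_0^1 u\cdot v_s^x(Y_s^x)\,ds=\int_0^1\frac{1}{\sqrt s}\,u\cdot\rho_{W[s]}(Y_s^x/\sqrt s)\,ds,
\]
and Minkowski's integral inequality in $L^p(P)$ together with $Y_s^x/\sqrt s\eqlaw W[s]$ then delivers \eqref{eq:score-bound}. The only remaining point is to verify that $\mcl Q$ is a bona fide Markov kernel: this reduces to joint measurability of $(x,s,y)\mapsto v_s^x(y)$, which follows once $(x,y)\mapsto f^x(y)$ is chosen jointly measurable (a standard fact for densities of Markov kernels) because Gaussian convolution preserves joint measurability, together with $(x,\omega)$-measurability of the strong SDE solution $Y^x$. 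The main obstacle I expect is the drift-to-score identification, together with controlling the $1/\sqrt s$ singularity near $s=0$ when justifying Minkowski and Fubini; once those are in hand, the coupling inequality essentially writes itself, and the F\"ollmer construction packages the nonconstructive OV bound into an explicit, measurable coupling.
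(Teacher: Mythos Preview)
Your approach is essentially the paper's: both construct the coupling via the F\"ollmer process and identify the drift with a rescaled score of $W[s]$, then apply Minkowski's integral inequality together with the identification of the law of $Y_s/\sqrt s$ with that of $W[s]$. The drift-to-score identity $v_s^x(y)=\frac{1}{\sqrt s}\rho_{W[s]}(y/\sqrt s)$ and the law computation are exactly as in the paper.

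The paper differs only in packaging, and the difference matters for the two technical points you flag at the end. Rather than solving the SDE $dY^x=v^x(Y^x)\,ds+dB$ strongly under $P$ and invoking strong uniqueness, the paper takes a space where $Y$ is a Brownian motion under a reference measure $Q$, defines $B^x_t=Y_t-\int_0^t v_s^x(Y_s)\,ds$, tilts to $P^x$ via $dP^x/dQ=f^x(Y_1)$, and sets $\mcl Q(x,A)=\IE_Q[\mathbf 1_A(Y_1,B^x_1)f^x(Y_1)]$. This buys two things. First, joint measurability of $\mcl Q$ in $x$ is immediate from Fubini applied to this explicit formula, without any appeal to $(x,\omega)$-measurability of a strong SDE solution. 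Second, the construction only needs the drift integral $\int_0^1 v_s^x(Y_s)\,ds$ to be finite $P^x$-a.s.\ (supplied by \cite{ElLe18}), not that the SDE admit a strong solution on the closed interval $[0,1]$. Under the bare hypothesis that $f^x$ is smooth, strong existence and uniqueness for the F\"ollmer SDE on all of $[0,1]$ are not automatic---the drift is only locally Lipschitz for $s<1$ and can blow up at $s=1$---so your strong-uniqueness step and the ensuing measurability claim would need additional justification that the change-of-measure route simply sidesteps.
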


\begin{proof}
Consider a filtered probability space $(\Omega,\mcl F,\mathbf F=(\mcl F)_{t\in[0,1]},Q)$ on which a $d$-dimensional standard $\mathbf F$-Brownian motion $Y=(Y_t)_{t\in[0,1]}$ is defined. 
For every $x\in\mcl X$, define a process $M^x=(M^x_t)_{t\in[0,1]}$ as $M^x_t=P_{1-t}f^x(Y_t)$, where $P_{1-t}f^x(y)=\E f^x(y+\sqrt{1-t}G)$, $y\in\mathbb R^d$. Then, define a measure $P^x$ on $(\Omega,\mcl F)$ as $P^x(F)=\E_Q[M^x_11_F]$, i.e.~$dP^x/dQ=M^x_1$ (cf.~Eq.(19) in \cite{ElLe18}). Also, define a process $B^x=(B^x_t)_{t\in[0,1]}$ as 
\ben{\label{eq:follmer}
B^x_t=Y_t-\int_0^t\nabla\log (P_{1-s}f^x)(Y_s)ds.
}
By Theorem 2.1 in \cite{ElLe18}, $P^x$ is a probability measure on $(\Omega,\mcl F)$, and $B^x$ is well-defined and a $d$-dimensional standard $\mathbf F$-Brownian motion under $P^x$. Moreover, the law of $Y_1$ has density $f^x$ with respect to $N(0,I_d)$ under $P^x$. 
We then define $\mcl Q(x,A)=P^x((Y_1,B^x_1)\in A)=\E_Q[1_A(Y_1,B^x_1)M^x_1]$ for any measurable set $A\subset\mathbb R^d\times\mathbb R^d$. It is evident from Fubini's theorem that $\mcl Q$ is a Markov kernel from $(\mcl X,\mcl A)$ to $\mathbb R^d\times\mathbb R^d$. 
Also, $\mcl Q$ satisfies condition (i) by construction. So it remains to check condition (ii) is satisfied. 

Let $W$ and $Z$ be as in condition (ii). We have
\ba{
\norm{u\cdot(W-Z)}_p=\norm{u\cdot(Y_1-B^x_1)}_{L^p(P^x)}
\leq\int_0^1\norm{u\cdot\nabla\log (P_{1-t}f^x)(Y_t)}_{L^p(P^x)}dt,
}
where the last inequality follows from \eqref{eq:follmer} and the integral Minkowski inequality (see e.g.~Proposition C.4 in \cite{Ja97}). 
For every $t\in[0,1]$, the law of $Y_t$ under $P^x$ is the same as the law of $\sqrt tW[t]=tW+\sqrt{t(1-t)}G$. This is pointed out by \cite[Eq.(7)]{ElMi20}, but we can directly prove it as follows. 
For any bounded measurable function $h:\mathbb R^d\to\mathbb R$, we have 
\[
\E_{P^x}[h(Y_t)]=\E_Q[h(Y_t)M_1^x]=\E_Q[h(Y_t)f^x(Y_1)].
\] 
Let $G'\sim N(0,I_d)$ be independent of $G$. Recall that $Y$ is a standard Brownian motion in $\mathbb R^d$ under $Q$. Then, one can easily check that $(Y_t,Y_1)$ has the same law as $(tG'+\sqrt{t(1-t)}G,G')$ under $Q$. Hence
\[
\E_{P^x}[h(Y_t)]=\E[h(tG'+\sqrt{t(1-t)}G)f^x(G')]=\E[h(tW+\sqrt{t(1-t)}G)],
\] 
where the last equality holds because $f^x$ is the density of $\mcl P(x,\cdot)$ with respect to $N(0,I_d)$. This proves the desired result. Therefore, 
\ba{
\norm{u\cdot(W-Z)}_p
\leq\int_0^1\norm{u\cdot\nabla\log (P_{1-t}f^x)(\sqrt tW[t])}_pdt.
}
Now, for any bounded measurable function $h:\mathbb R^d\to\mathbb R$, we have by definition
\ba{
\E h(W[t])=\E[h(\sqrt tG'+\sqrt{1-t}G)f^x(G')].
}
Since $(\sqrt tG'+\sqrt{1-t}G,G')$ has the same law as $(G',\sqrt tG'+\sqrt{1-t}G)$, we obtain
\ba{
\E h(W[t])=\E[h(G')f^x(\sqrt tG'+\sqrt{1-t}G)]=\E[h(G')P_{1-t}f^x(\sqrt tG')].
}
This implies that $w\mapsto P_{1-t}f^x(\sqrt tw)$ is the density of the law of $W[t]$ with respect to $N(0,I_d)$. 
Consequently, $\rho_{W[t]}(w)=\sqrt t\nabla\log(P_{1-t}f^x)(\sqrt tw)$. Hence  
\ba{
\|u\cdot\nabla \log (P_{1-t}f^x)(\sqrt tW[t])\|_p
=\|u\cdot\rho_{W[t]}(W[t])\|_p/\sqrt t.
}
All together, we obtain \eqref{eq:score-bound}. 
\end{proof}

\subsection{Stein kernel bound}

To bound the right hand side of \eqref{eq:score-bound}, we use the notion of Stein kernel. Throughout this subsection, $\mu$ denotes a centered probability measure on $\mathbb R^d$. 
\begin{definition}[Stein kernel]
A $d\times d$ matrix valued measurable function $\tau$ on $\mathbb R^d$ is called a \textit{Stein kernel} for $\mu$ if all entries of $\tau$ belong to $L^1(\mu)$ and
\ben{\label{eq:sk}
\int x\cdot h(x)\mu(dx)=\int\langle\tau(x),\nabla h(x)\rangle_{H.S.}\mu(dx)
}
for every compactly supported smooth function $h:\mathbb R^d\to\mathbb R^d$. 

We say that $\tau$ is a Stein kernel for a centered random vector $W$ in $\mathbb R^d$ if it is a Stein kernel for the law of $W$. In this case, \eqref{eq:sk} reads as
\[
\E[W\cdot h(W)]=\E[\langle \tau(W),\nabla h(W)\rangle_{H.S.}].
\]
\end{definition}


\begin{remark}
The definition of Stein kernels here is the same as in \cite{CFP19}. The same definition is also adopted in \cite{NPS14,Fa19} and \cite{MiSh21}. As discussed in the introduction of \cite{CFP19}, some articles use a slightly weaker version of \eqref{eq:sk} to define Stein kernels; see e.g.~\cite{LNP15} and \cite{FaKo21}. We need the present stronger version to apply Lemma 2.9 in \cite{NPS14}. 
\end{remark}

We obtain the following bound by a direct extension of the arguments in the proof of \cite[Proposition 3.4]{LNP15}:
\begin{lemma}\label{sk-bound}
Let $W$ be a centered random vector in $\mathbb R^d$. Suppose that $W$ has a Stein kernel $\tau$. Then
\ben{\label{eq:sk-bound}
\int_0^1 \frac{1}{\sqrt t}\|u\cdot\rho_{W[t]}(W[t])\|_pdt\leq C\sqrt p\|(\tau(W)^\top-I_d) u\|_{p}
}
for all $u\in\mathbb R^d$ and $p\geq1$. 
\end{lemma}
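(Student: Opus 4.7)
The plan is to establish, for each $t\in(0,1)$, the pointwise-in-$t$ bound
\[
\|u\cdot\rho_{W[t]}(W[t])\|_p\leq C\sqrt p\,\frac{t}{\sqrt{1-t}}\,\|(\tau(W)^\top-I_d)u\|_p,
\]
and then integrate against $dt/\sqrt t$, using $\int_0^1\sqrt{t/(1-t)}\,dt=\pi/2<\infty$ to obtain \eqref{eq:sk-bound}. I would prove the pointwise bound by duality: since $u\cdot\rho_{W[t]}(W[t])$ is a function of $W[t]$,
\[
\|u\cdot\rho_{W[t]}(W[t])\|_p=\sup_h\E[h(W[t])\,u\cdot\rho_{W[t]}(W[t])],
\]
where the supremum runs over smooth bounded $h:\mathbb R^d\to\mathbb R$ with bounded derivatives and $\|h(W[t])\|_q\leq1$, with $q$ the conjugate exponent of $p$.

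The central computation is to show, for such test functions $h$, the identity
\[
\E[h(W[t])\,u\cdot\rho_{W[t]}(W[t])]=\frac{t}{\sqrt{1-t}}\,\E[h(W[t])\,G\cdot(\tau(W)^\top-I_d)u].
\]
Because $W[t]$ has a smooth positive density $f_{W[t]}$ with respect to $N(0,I_d)$ and $\rho_{W[t]}=\nabla\log f_{W[t]}$, integration by parts (combined with Gaussian IBP for the reference measure) gives
\[
\E[h(W[t])\,u\cdot\rho_{W[t]}(W[t])]=\E[u\cdot W[t]\,h(W[t])]-\E[u\cdot\nabla h(W[t])].
\]
I then decompose $W[t]=\sqrt tW+\sqrt{1-t}G$. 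Gaussian IBP conditional on $W$ yields $\E[u\cdot G\,h(W[t])]=\sqrt{1-t}\,\E[u\cdot\nabla h(W[t])]$, and applying the Stein kernel identity \eqref{eq:sk} conditional on $G$ to the vector field $w\mapsto u\,h(\sqrt tw+\sqrt{1-t}G)$, whose Jacobian is $\sqrt t\,u(\nabla h)^\top$, yields $\E[u\cdot W\,h(W[t])]=\sqrt t\,\E[\nabla h(W[t])\cdot\tau(W)^\top u]$. Plugging these in, the Stein kernel contribution becomes $t\,\E[\nabla h(W[t])\cdot\tau(W)^\top u]$, while the remaining gradient terms collapse via $(1-t)-1=-t$ to $-t\,\E[u\cdot\nabla h(W[t])]$, giving $t\,\E[\nabla h(W[t])\cdot(\tau(W)^\top-I_d)u]$. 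A final Gaussian IBP in $G$, with $v:=(\tau(W)^\top-I_d)u$ held fixed given $W$, converts this into $\frac{t}{\sqrt{1-t}}\,\E[h(W[t])\,G\cdot(\tau(W)^\top-I_d)u]$.

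With the identity in hand, H\"older's inequality yields $\|u\cdot\rho_{W[t]}(W[t])\|_p\leq\frac{t}{\sqrt{1-t}}\,\|G\cdot(\tau(W)^\top-I_d)u\|_p$. Since $G$ is independent of $W$, conditional on $W$ the scalar $G\cdot(\tau(W)^\top-I_d)u$ is centered Gaussian with variance $|(\tau(W)^\top-I_d)u|^2$, so the standard moment estimate $\|N(0,1)\|_p\leq C\sqrt p$ gives $\|G\cdot(\tau(W)^\top-I_d)u\|_p\leq C\sqrt p\,\|(\tau(W)^\top-I_d)u\|_p$. Taking the supremum over $h$ and integrating against $dt/\sqrt t$ completes the proof.

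The main technical obstacle is justifying the Stein kernel identity for the test vector field $w\mapsto u\,h(\sqrt tw+\sqrt{1-t}G)$: this field is neither compactly supported nor deterministic, while \eqref{eq:sk} is stated for compactly supported smooth test fields. The random parameter $G$ is handled by conditioning and Fubini; the non-compact support is handled by an approximation argument in the spirit of Lemma 2.9 of \cite{NPS14}, which crucially relies on the stronger form of the Stein kernel definition adopted here (see the Remark above). The remaining boundary terms in the other IBP steps vanish by dominated convergence, since $h$ and its derivatives can be taken bounded and $W[t]$ has a smooth density.
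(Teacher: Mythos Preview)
Your proof is correct and follows essentially the same approach as the paper: the identity you derive for $\E[h(W[t])\,u\cdot\rho_{W[t]}(W[t])]$ is exactly the weak form of the conditional-expectation representation $\rho_{W[t]}(W[t])=\frac{t}{\sqrt{1-t}}\E[(\tau(W)-I_d)G\mid W[t]]$ from \cite[Lemma~2.9]{NPS14}, which the paper simply cites, and your H\"older step is equivalent to the paper's use of Jensen's inequality on this conditional expectation. The remaining Gaussian moment bound and the integration $\int_0^1\sqrt{t/(1-t)}\,dt=\pi/2$ are identical; you have just inlined the proof of the NPS14 lemma rather than invoking it.
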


\begin{proof}
By Lemma 2.9 in \cite{NPS14}, 
\[
\rho_{W[t]}(W[t])=\frac{t}{\sqrt{1-t}}\E[(\tau(W)-I_d)G\mid W[t]]\quad\text{a.s.}
\]
for all $t\in[0,1)$. Hence, by Jensen's inequality,
\ba{
\int_0^1 \frac{1}{\sqrt t}\|u\cdot\rho_{W[t]}(W[t])\|_pdt
&\leq\int_0^1 \sqrt{\frac{t}{1-t}}\|u\cdot(\tau(W)-I_d)G\|_pdt.
}
Conditional on $W$, $u\cdot(\tau(W)-I_d)G$ follows the normal distribution with mean 0 and variance $u^\top(\tau(W)-I_d)(\tau(W)-I_d)^\top u=|(\tau(W)^\top-I_d) u|^2$. 
Moreover, when $\zeta$ is a normal variable with mean 0 and variance $\sigma^2$, then $\|\zeta\|_p=2\sigma\{\Gamma((p+1)/2)/\sqrt \pi\}^{1/p}\leq C\sqrt p\sigma$, where the upper bound follows by Stirling's formula. Consequently, we obtain
\ba{
\int_0^1 \frac{1}{\sqrt t}\|u\cdot\rho_{W[t]}(W[t])\|_pdt
&\leq C\sqrt p\|(\tau(W)^\top-I_d) u\|_p.
}
This completes the proof. 
\end{proof}

To get the desired bounds from \cref{sk-bound}, we need to construct a Stein kernel $\tau$ for $\mu$ such that $\int|\tau(x)^\top u|^p\mu(dx)$ enjoys a dimension-free bound (up to the constant $\varpi_d$) for any $u\in\mathbb R^d$ and $p\geq1$ when $\mu$ is log-concave. If we additionally assume that $\mu$ is uniformly log-concave, such a construction is given by \cite{Fa19}; see Corollary 2.4 ibidem. However, it is unclear whether this construction gives an appropriate bound in the log-concave case: Only an entry-wise bound is available in this case. 
Recently, \cite{MiSh21} developed another construction that would admit a bound for $\int\|\tau(x)\|_{op}^p\mu(dx)$; see Theorem 1.5 and the proof of Theorem 5.7 ibidem. Their original bound is not dimension-free, but it implicitly depends on $\varpi_d$, so it is improvable if the KLS conjecture is true. However, inspection of their proof suggests that the bound would still contain a poly-log factor on the dimension even if we assume the KLS conjecture. 

Here, our first key observation is that a Stein kernel constructed in \cite{CFP19} enjoys a dimension-free bound for $\int|\tau(x)^\top u|^2\mu(dx)$, provided that the KLS conjecture is true. 
We first recall the construction of \cite{CFP19}. 
For every integer $k\geq1$, we define $W^{1,2}_k(\mu)$ as the closure of the set of all compactly supported smooth functions $h:\mathbb R^d\to\mathbb R^k$ in $L^2(\mu)$, with respect to the norm $\sqrt{\int(|h|^2+\norm{\nabla h}_{H.S.}^2)d\mu}$. 
We regard $W^{1,2}_k(\mu)$ as a Hilbert space equipped with this norm. 
We also write $W^{1,2}_{k,0}(\mu)$ for the set of functions $h\in W^{1,2}_{k}(\mu)$ with $\int hd\mu=0$. 
If $\tau$ is a Stein kernel for $\mu$ and $\int(|x|^2+\|\tau(x)\|_{H.S.}^2)\mu(dx)<\infty$, it is evident by definition that \eqref{eq:sk} holds for any $h\in W^{1,2}_d(\mu)$.  
\begin{theorem}[\cite{CFP19}, Theorem 2.4]\label{thm:cfp}
If $\varpi(\mu)<\infty$, there exists a unique function $\psi_\mu\in W^{1,2}_{d,0}(\mu)$ such that $\tau_\mu:=\nabla\psi_\mu$ is a Stein kernel for $\mu$. 
\end{theorem}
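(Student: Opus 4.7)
The plan is to realize $\psi_\mu$ via the Riesz representation theorem applied to a naturally defined bounded linear functional on $W^{1,2}_{d,0}(\mu)$. The key is to equip this space with the inner product
\[
\langle g,h\rangle_*:=\int\langle\nabla g(x),\nabla h(x)\rangle_{H.S.}\mu(dx),
\]
and then rewrite the defining identity \eqref{eq:sk} of a Stein kernel with $\tau=\nabla\psi$ as a Riesz representation statement.

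First I would check that $\langle\cdot,\cdot\rangle_*$ is genuinely an inner product on $W^{1,2}_{d,0}(\mu)$ and that the induced norm $\|\cdot\|_*$ is equivalent to the ambient $W^{1,2}$ norm. The key input is the coordinate-wise Poincar\'e inequality: for $h=(h_1,\dots,h_d)\in W^{1,2}_{d,0}(\mu)$, each component $h_j$ is mean-zero under $\mu$, so
\[
\int|h|^2d\mu=\sum_{j=1}^d\int h_j^2d\mu\leq\varpi(\mu)\sum_{j=1}^d\int|\nabla h_j|^2d\mu=\varpi(\mu)\|h\|_*^2.
\]
This gives non-degeneracy of $\|\cdot\|_*$ and equivalence of norms, so $(W^{1,2}_{d,0}(\mu),\langle\cdot,\cdot\rangle_*)$ is a Hilbert space. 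The same Poincar\'e argument, applied to (bounded Lipschitz truncations of) the coordinate functions $x_j$ and using that $\mu$ is centered, yields $\int|x|^2d\mu\leq d\varpi(\mu)<\infty$ after passing to the limit.

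Next I would define $L(h):=\int x\cdot h(x)\mu(dx)$ for $h\in W^{1,2}_{d,0}(\mu)$. By Cauchy--Schwarz (applied coordinate-wise and summed) together with the estimates above,
\[
|L(h)|\leq\left(\int|x|^2d\mu\right)^{1/2}\|h\|_{L^2(\mu)}\leq\sqrt{d}\,\varpi(\mu)\|h\|_*,
\]
so $L$ is a bounded linear functional on the Hilbert space. The Riesz representation theorem produces a unique $\psi_\mu\in W^{1,2}_{d,0}(\mu)$ with $L(h)=\langle\psi_\mu,h\rangle_*$, that is,
\[
\int x\cdot h(x)\mu(dx)=\int\langle\nabla\psi_\mu(x),\nabla h(x)\rangle_{H.S.}\mu(dx)
\]
for every $h\in W^{1,2}_{d,0}(\mu)$. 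Setting $\tau_\mu:=\nabla\psi_\mu$, the $L^1(\mu)$ integrability of the entries of $\tau_\mu$ follows from $\int\|\tau_\mu\|_{H.S.}d\mu\leq\|\psi_\mu\|_*<\infty$, and uniqueness of $\psi_\mu$ within $W^{1,2}_{d,0}(\mu)$ is automatic from Riesz.

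It remains to verify that the identity above yields \eqref{eq:sk} for an \emph{arbitrary} compactly supported smooth $h:\mathbb R^d\to\mathbb R^d$, which need not be mean-zero. Given such $h$, set $\bar h:=\int h\,d\mu$, a constant vector, and apply the identity to $h-\bar h$; both sides are unchanged by this replacement, the left-hand side because $\mu$ is centered and the right-hand side because $\nabla\bar h=0$. The main technical point, which I would regard as the only real obstacle, is showing $h-\bar h\in W^{1,2}_{d,0}(\mu)$: since $h-\bar h$ is no longer compactly supported, one needs a smooth-cutoff approximation $\phi_n(h-\bar h)$ and has to verify that both the $L^2(\mu)$ norm of the residual and the cross term $(h-\bar h)\nabla\phi_n$ vanish as $n\to\infty$, which follows by dominated convergence using the boundedness of $h-\bar h$ and the finite second moment of $\mu$.
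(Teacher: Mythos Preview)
Your proposal is correct and is essentially the argument of \cite{CFP19}. Note that the paper does not supply its own proof of this statement: it is quoted as Theorem~2.4 of \cite{CFP19}, and the paper even remarks that ``\cite{CFP19}'s construction is based on the Lax--Milgram theorem,'' which is exactly your Riesz-representation approach on $(W^{1,2}_{d,0}(\mu),\langle\cdot,\cdot\rangle_*)$.
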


\begin{lemma}\label{key0}
Under the assumptions of \cref{thm:cfp}, if $X\sim\mu$, then
\[
\E|\tau_\mu(X)^\top u|^2\leq\varpi(\mu)\E|X\cdot u|^2
\]
for all $u\in\mathbb R^d$.
\end{lemma}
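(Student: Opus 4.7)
The key observation is that since $\tau_\mu=\nabla\psi_\mu$ is a Jacobian, the vector field $x\mapsto\tau_\mu(x)^\top u$ is itself a gradient: writing $(\tau_\mu(x))_{ij}=\partial_j\psi_\mu^{(i)}(x)$, one has $(\tau_\mu(x)^\top u)_k=\sum_i u_i\partial_k\psi_\mu^{(i)}(x)=\partial_k(u\cdot\psi_\mu)(x)$, so $\tau_\mu^\top u=\nabla\varphi$ with $\varphi:=u\cdot\psi_\mu$. Thus the left-hand side of the claimed bound is the Dirichlet energy $\E|\nabla\varphi(X)|^2$, and by linearity $\varphi\in W^{1,2}_{1,0}(\mu)$, since $\psi_\mu\in W^{1,2}_{d,0}(\mu)$.

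The plan is then to test the Stein identity \eqref{eq:sk} against the vector field $h(x):=u\,\varphi(x)$, using the extension of \eqref{eq:sk} from compactly supported smooth fields to all of $W^{1,2}_d(\mu)$ noted in the paragraph preceding \cref{thm:cfp}. With $(\nabla h)_{ij}=u_i\partial_j\varphi$, a direct computation gives
\[
\langle\tau_\mu,\nabla h\rangle_{H.S.}=\sum_{i,j}u_i(\partial_j\psi_\mu^{(i)})\partial_j\varphi=|\nabla\varphi|^2,
\]
while $X\cdot h(X)=(X\cdot u)\,\varphi(X)$. Substituting into \eqref{eq:sk} yields the energy identity
\[
\E|\tau_\mu(X)^\top u|^2=\E|\nabla\varphi(X)|^2=\E\bigl[(X\cdot u)\,\varphi(X)\bigr].
\]

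To close the bound I would apply Cauchy--Schwarz to the right-hand side to get $\|X\cdot u\|_{L^2(\mu)}\|\varphi\|_{L^2(\mu)}$, and then invoke the Poincar\'e inequality for the mean-zero function $\varphi$ to obtain
\[
\|\varphi\|_{L^2(\mu)}\leq\sqrt{\varpi(\mu)}\,\|\nabla\varphi\|_{L^2(\mu)}=\sqrt{\varpi(\mu)}\,\|\tau_\mu^\top u\|_{L^2(\mu)}.
\]
Combining these inequalities and dividing by $\|\tau_\mu^\top u\|_{L^2(\mu)}$ (the degenerate case being trivial) yields $\|\tau_\mu^\top u\|_{L^2(\mu)}^2\leq\varpi(\mu)\,\E|X\cdot u|^2$, which is the claim.

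There is essentially no conceptual obstacle beyond recognizing the correct test field; the only real subtlety is the admissibility of the self-referential choice $h=u(u\cdot\psi_\mu)$ in \eqref{eq:sk}. This reduces to $h\in W^{1,2}_d(\mu)$, which is immediate from $\psi_\mu\in W^{1,2}_d(\mu)$, together with the integrability $\int(|x|^2+\|\tau_\mu\|_{H.S.}^2)\,d\mu<\infty$, which follows from $\varpi(\mu)<\infty$ (applied coordinatewise to truncations of $x\mapsto x_i$) and from $\psi_\mu\in W^{1,2}_d(\mu)$.
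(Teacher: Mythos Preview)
Your proof is correct and is essentially identical to the paper's own argument: you use the same test field $h(x)=u\,\varphi(x)=uu^\top\psi_\mu(x)$ in the Stein identity, obtain the same energy identity $\E|\tau_\mu(X)^\top u|^2=\E[(X\cdot u)(u^\top\psi_\mu(X))]$, and then apply Cauchy--Schwarz followed by the Poincar\'e inequality for the mean-zero scalar function $u^\top\psi_\mu$ to close the bound. The observation that $\tau_\mu^\top u=\nabla(u\cdot\psi_\mu)$ is a gradient is a nice way to motivate the choice of $h$, but the computation and the handling of the admissibility issues match the paper's proof step for step.
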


\begin{proof}
Observe that $\varpi(\mu)<\infty$ implies $\E|X|^2<\infty$. Hence, we can apply \eqref{eq:sk} with $\tau=\tau_\mu$ and $h(x)=uu^\top \psi_\mu(x)$, which yields
\ba{
\E[X\cdot uu^\top \psi_\mu(X)]
&=\E[\langle\tau_\mu(X),uu^\top \tau_\mu(X)\rangle_{H.S.}]\\
&=\E[\tr(\tau_\mu(X)^\top uu^\top \tau_\mu(X))]
=\E|\tau_\mu(X)^\top u|^2.
}
Hence, by the Cauchy--Schwarz inequality,
\ba{
\E|\tau_\mu(X)^\top u|^2\leq\sqrt{\E|X\cdot u|^2\E|u^\top\psi_\mu(X)|^2}.
}
By a standard approximation argument, one can easily verify that \eqref{eq:poincare} with $\varpi=\varpi(\mu)$ holds for any $h\in W^{1,2}_1(\mu)$. Applying this inequality with $h(x)=u^\top\psi_\mu(x)$, we obtain
\ba{
\E|u^\top\psi_\mu(X)|^2\leq\varpi(\mu)\E|u^\top\tau_\mu(X)|^2=\varpi(\mu)\E|\tau_\mu(X)^\top u|^2.
}
Consequently, 
\[
\E|\tau_\mu(X)^\top u|^2\leq\sqrt{\varpi(\mu)\E|X\cdot u|^2}\sqrt{\E|\tau_\mu(X)^\top u|^2}.
\]
This yields the desired result. 
\end{proof}

A drawback of \cite{CFP19}'s construction is that it is based on the Lax--Milgram theorem and thus implicit. So it is generally difficult to control moments higher than two. 
Fortunately, this is not the case when $\mu$ is uniformly log-concave: We can find an explicit representation of the function $\psi_\mu$ in \cref{thm:cfp}, which allows us to get a dimension-free bound for $\norm{\tau_\mu}_{op}$:
\begin{lemma}\label{key1}
Let $\mu$ be a centered probability measure on $\mathbb R^d$. 
Suppose that $\mu$ has a smooth, positive and $\eps$-uniformly log-concave density for some $\eps>0$.  Then the function $\tau_\mu$ in \cref{thm:cfp} satisfies $\norm{\tau_\mu}_{op}\leq\eps^{-1}$ $\mu$-a.s. 
\end{lemma}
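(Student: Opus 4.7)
Write $\mu = e^{-V}dx$ with $V\in C^2(\mathbb R^d)$, $\Hess V\succeq \eps I_d$. The plan is to give an \emph{explicit} construction of the function $\psi_\mu$ from \cref{thm:cfp} and read off the operator norm bound on $\tau_\mu=\nabla\psi_\mu$ from a Bakry--\'Emery gradient estimate. Let $L=\Delta-\nabla V\cdot\nabla$ be the Langevin generator associated with $\mu$ and let $(P_t)_{t\geq0}$ be its semigroup. For each coordinate $i$, define
\be{
\psi_i(x):=\int_0^\infty P_t\mathrm{id}_i(x)\,dt,\qquad \mathrm{id}_i(x):=x_i,
}
and put $\psi:=(\psi_1,\dots,\psi_d)^\top$. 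Since $\mu$ is centered, $\int \mathrm{id}_i\,d\mu=0$, so under uniform log-concavity the semigroup decays exponentially to $0$ on mean-zero coordinates and the integral above converges.

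Next I would verify that this $\psi$ is the function $\psi_\mu$ of \cref{thm:cfp}. First, by the Bakry--\'Emery commutation $|\nabla P_t f|\leq e^{-\eps t}P_t|\nabla f|$, applied with $f=\mathrm{id}_i$ (so $|\nabla f|\equiv 1$), I get the pointwise bound $|\nabla P_t\mathrm{id}_i(x)|\leq e^{-\eps t}$. This justifies interchanging $\nabla$ and the time integral and shows $|\nabla\psi_i(x)|\leq \eps^{-1}$ everywhere, which in particular places $\psi\in W^{1,2}_{d,0}(\mu)$ (the $\mu$-integrability of $\psi_i$ itself follows from the Poincar\'e inequality $\varpi(\mu)\leq\eps^{-1}$ combined with the gradient bound, and the mean-zero property is immediate from $\int P_t\mathrm{id}_i\,d\mu=0$). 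Second, by the standard identity $L\int_0^\infty P_tg\,dt=-g$ for mean-zero $g$, we have $L\psi_i=-\mathrm{id}_i$. Third, integration by parts gives, for any smooth compactly supported $h:\mathbb R^d\to\mathbb R^d$,
\be{
\int\langle\nabla\psi,\nabla h\rangle_{H.S.}\,d\mu
=\sum_i\int\nabla\psi_i\cdot\nabla h_i\,d\mu
=-\sum_i\int (L\psi_i)\,h_i\,d\mu
=\int x\cdot h\,d\mu,
}
so $\nabla\psi$ is a Stein kernel for $\mu$. By the uniqueness part of \cref{thm:cfp}, this forces $\psi=\psi_\mu$ and $\tau_\mu=\nabla\psi$.

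It remains to obtain the operator norm bound. Entrywise, $(\nabla\psi)_{ij}=\partial_j\psi_i$, so for every $u\in\mathbb R^d$,
\be{
\tau_\mu(x)^\top u=\nabla\bklr{u\cdot\psi}(x)=\nabla\int_0^\infty P_t(u\cdot\mathrm{id})(x)\,dt.
}
Applying the Bakry--\'Emery commutation to $f=u\cdot\mathrm{id}$, whose gradient is the constant vector $u$, yields $|\nabla P_t(u\cdot\mathrm{id})(x)|\leq e^{-\eps t}|u|$, so $|\tau_\mu(x)^\top u|\leq \eps^{-1}|u|$ pointwise. Taking the supremum over unit $u$ gives $\|\tau_\mu(x)\|_{op}=\|\tau_\mu(x)^\top\|_{op}\leq\eps^{-1}$, as claimed.

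The main obstacle is the soft technical step of lining up the explicit construction with the abstract one from \cref{thm:cfp}: one must check that $\psi\in W^{1,2}_{d,0}(\mu)$, that the Stein identity extends from compactly supported test functions to all of $W^{1,2}_d(\mu)$ so uniqueness applies, and that the semigroup identity $L\psi_i=-\mathrm{id}_i$ holds in the requisite weak sense. All of these are routine once the pointwise gradient bound $|\nabla\psi_i|\leq\eps^{-1}$ and exponential semigroup decay are in hand, but they are the only non-trivial ingredients; the operator norm bound itself is then one line.
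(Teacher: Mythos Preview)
Your proof is correct and follows essentially the same route as the paper: both construct $\psi_\mu$ as the time-integrated Langevin semigroup applied to the identity map, verify it solves the CFP variational problem, and extract the operator norm bound from exponential gradient decay under the curvature condition $\Hess V\succeq\eps I_d$. The only cosmetic difference is that the paper derives the gradient contraction $\|\nabla_x\E X^x_t\|_{op}\leq e^{-\eps t}$ by hand via the derivative flow of the Langevin SDE, whereas you invoke the Bakry--\'Emery commutation as a black box---a shortcut the paper itself points out in the remark immediately following its proof.
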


\begin{proof}
By assumption, the density of $\mu$ is of the form $e^{-V}$ with $V:\mathbb R^d\to\mathbb R$ a $C^\infty$ function such that $\Hess V\succeq\eps I_d$. 
For every $x\in\mathbb R^d$, consider the following stochastic differential equation (SDE):
\ben{\label{eq:langevin}
X^x_0=x,\quad
dX^x_t=-\nabla V(X^x_t)dt+\sqrt 2dB_t,~t\geq0.
}
This SDE has a unique strong solution. To see this, note that $-x\cdot\nabla V(x)\leq V(0)-V(x)$ for all $x\in\mathbb R^d$ because $V$ is convex. Hence, by Theorem 5.1 in \cite{SaWe14}, there exists a constant $c>0$ such that $-x\cdot\nabla V(x)\leq c$ for all $x\in\mathbb R^d$. Also, note that $\nabla V$ is locally Lipschitz. Therefore, the SDE \eqref{eq:langevin} has a unique strong solution by Theorems 3.7 and 3.11 in \cite[Chapter 5]{EtKu86}. 

Let $X^x=(X^x_t)_{t\geq0}$ be the solution to \eqref{eq:langevin}. 
Below we show that the function $x\mapsto\psi_t(x):=\int_0^t\E X_s^xds$ converges to some function $\psi$ in the space $W^{1,2}_d(\mu)$ as $t\to\infty$, and $\psi=\psi_\mu$ $\mu$-a.s. Indeed, this fact follows from the general result of \cite[Theorem 5.10]{ArHo19} based on Dirichlet form theory (see also \cite[Proposition 3.1]{ArHo22}), but we give a proof without referring to Dirichlet forms for readers' convenience. 

Let $(T_t)_{t\geq0}$ be the transition semigroup on $L^2(\mu)$ associated with the SDE \eqref{eq:langevin}. 
It is well-known that its generator is given by 
\[
L=-\nabla V\cdot\nabla+\nabla\cdot\nabla,
\]
where $\nabla\cdot\nabla$ denotes the Laplacian. 
Using integration by parts, we have for any compactly supported smooth functions $g:\mathbb R^d\to\mathbb R$ and $h:\mathbb R^d\to\mathbb R$
\ben{\label{eq:IBP}
\int_{\mathbb R^d}gLhd\mu
=-\int_{\mathbb R^d}\nabla g\cdot\nabla hd\mu.
}
From this identity we obtain $\domain(L)\subset W^{1,2}_1(\mu)$. Also, by definition, \eqref{eq:IBP} also holds for any $g\in W^{1,2}_1(\mu)$ and $h\in\domain(L)$. 
Further, by Proposition 9.2 in \cite[Chapter 4]{EtKu86}, \eqref{eq:IBP} implies that $\mu$ is a stationary distribution for $(T_t)_{t\geq0}$. 

Next we show that 
\ben{\label{var-decay}
\int|T_tg|^2d\mu\leq e^{-2t/\varpi(\mu)}\int g^2d\mu
}
for any $t\geq0$ and $g\in L^2(\mu)$ with $\int gd\mu=0$. Since $\domain(L)$ is dense in $L^2(\mu)$, it suffices to prove \eqref{var-decay} when $g\in\domain(L)$. Then, by Proposition 1.5(b) in \cite[Chapter 1]{EtKu86},
\ben{\label{eq:fp}
\frac{d}{dt}T_tg=LT_tg.
}
Therefore, 
\ba{
\frac{d}{dt}\int|T_tg|^2d\mu=2\int T_tg\cdot LT_tgd\mu=-2\int |\nabla T_tg|^2d\mu,
}
where the first identity follows from \eqref{eq:fp} and the second from \eqref{eq:IBP}. Since $\int T_tgd\mu=\int gd\mu=0$, we have
\[
\int |T_tg|^2d\mu\leq\varpi(\mu)\int |\nabla T_tg|^2d\mu.
\]
Consequently, 
\[
\frac{d}{dt}\int|T_tg|^2d\mu\leq-\frac{2}{\varpi(\mu)}\int |T_tg|^2d\mu.
\]
Thus, we obtain \eqref{var-decay} by Gronwall's inequality. 

Applying \eqref{var-decay} with $g(x)=x_j$ for every $j=1,\dots,d$, we obtain
\ben{\label{var-decay-applied}
\int|\E X_t^x|^2\mu(dx)\leq e^{-2t/\varpi(\mu)}\int |x|^2\mu(dx).
}
Hence, for $0<s<t$, 
\ba{
\int|\psi_t-\psi_s|^2d\mu
&\leq\int(t-s)\left(\int_s^t|\E X^x_u|^2du\right)\mu(dx)\\
&\leq\frac{\varpi(\mu)(t-s)(e^{-2s/\varpi(\mu)}-e^{-2t/\varpi(\mu)})}{2}\int |x|^2\mu(dx).
}
Thus $\int|\psi_t-\psi_s|^2d\mu\to0$ as $s,t\to\infty$. 
Meanwhile, by Theorem 39 in \cite[Chapter V]{Pr05}, we can take a version of $X^x$ such that the map $x\mapsto X_t^x$ is differentiable for any $t\geq0$ and its derivative $\nabla_xX_t^x=(\partial_{x_1}X_t^x,\dots,\partial_{x_d}X_t^x)$ satisfies
\[
\nabla_xX^x_t=I_d-\int_0^t\Hess V(X^x_s)\nabla_xX^x_sds.
\]
From this equation we can prove
\ben{\label{dx-op-bound}
\|\nabla_xX^x_t\|_{op}\leq e^{-\eps t}.
}
To see this, fix $u\in\mathbb R^d$ and set $D_t=\nabla_xX^x_tu$. Then we have
\ba{
\frac{d}{dt}|D_t|^2=2D_t\cdot\frac{d}{dt}D_t=-2D_t\Hess V(X^x_t)D_t\leq-2\eps|D_t|^2,
}
where the last inequality follows from $\Hess V\succeq\eps I_d$. Hence we obtain $|D_t|^2\leq|D_0|^2e^{-2\eps t}=|u|^2e^{-2\eps t}$ by Gronwall's inequality. This implies \eqref{dx-op-bound}. 
By \eqref{dx-op-bound}, $\psi_t$ is differentiable and $\nabla\psi_t=\int_0^t\E\nabla_xX^x_sds$. Hence, for $0<s<t$,
\ba{
\int\|\nabla\psi_t-\nabla\psi_s\|_{H.S.}^2d\mu
&\leq\int\left(\int_s^t\|\E\nabla_x X^x_u\|_{H.S.}du\right)^2\mu(dx)\\
&\leq d\left(\int_s^te^{-\eps u}du\right)^2\leq\frac{d(e^{-\eps s}-e^{-\eps t})^2}{\eps^2}.
}
So $\int\|\nabla\psi_t-\nabla\psi_s\|_{H.S.}^2d\mu\to0$ as $s,t\to\infty$. 
Consequently, $\psi_t$ converges to some function $\psi$ in the space $W^{1,2}_d(\mu)$ as $t\to\infty$. 

To prove $\psi=\psi_\mu$ $\mu$-a.s., we need to check $\int\psi d\mu=0$ and $\nabla\psi$ is a Stein kernel for $\mu$. The former is immediate because $\int\E X_s^x \mu(dx)=\int x\mu(dx)=0$, where the first identity holds because $\mu$ is a stationary distribution for $(T_t)_{t\geq0}$. 
Meanwhile, by Proposition 1.5(a) in \cite[Chapter 1]{EtKu86},
\[
\E X^x_t-x=L\psi_t(x)\quad\mu\text{-a.s.},
\]
where the operator $L$ is applied to $\psi_t$ coordinate-wise. 
Thus, by \eqref{eq:IBP}, for any $g\in W^{1,2}_d(\mu)$,
\[
\int g(x)\cdot(\E X^x_t-x)\mu(dx)=-\int\nabla \psi_t\cdot\nabla gd\mu.
\]
By \eqref{var-decay-applied}, $\int|\E X^x_t|^2\mu(dx)\to0$ as $t\to\infty$. Therefore, letting $t\to\infty$ in the above identity, we obtain
\[
\int g(x)\cdot x\mu(dx)=\int\nabla \psi\cdot\nabla g d\mu.
\]
So $\nabla\psi$ is a Stein kernel for $\mu$. 
All together, $\psi=\psi_\mu$ $\mu$-a.s. 

Finally, since $\|\nabla\psi_t\|_{op}\leq\int_0^te^{-\eps s}ds$ for all $t\geq0$, we have $\norm{\nabla\psi}_{op}\leq\eps^{-1}$ $\mu$-a.s. This completes the proof. 
\end{proof}

\begin{remark}
(a) The operator norm bound as in \cref{key1} holds for the other constructions mentioned above; see \cite[Corollary 2.4]{Fa19} and \cite[Theorem 3.1(1-a)]{MiSh21}. 
What is important for the construction by \cite{CFP19} is that it also satisfies the estimate in \cref{key0}, i.e.~a (nicely) dimension-free bound for $\E|\tau_\mu(X)^\top u|^2$ in the general log-concave case. 
This type of estimate plays a key role in the proof of \cref{ulc-bound} below because we need a precise estimate for the variance of $\tau_\mu(X)^\top u$ to derive an appropriate bound from an application of Rosenthal's inequality. 
As discussed above, such an estimate is currently unavailable for other constructions.  

\noindent(b) \cite{Fa19}'s construction is known to ensure the positive definiteness of the Stein kernel. This property is important in some applications; see \cite{FaMi22}. It is unclear whether the construction by \cite{CFP19} always has this property. 
The proof of \cref{key1} implies that this happens when $\int_0^t\E[\nabla_xX_s^x]ds$ is positive definite for sufficiently large $t$, but verification of this condition is not straightforward. 

\noindent(c) As we mentioned in the proof of \cref{key1}, the explicit representation of $\tau_\mu$ is already given in \cite{ArHo19}; see Remark 5.11(ii) ibidem. Indeed, the bound $\|\tau_\mu\|_{op}\leq\eps^{-1}$ also follows from this representation and gradient bounds for diffusion semigroups, e.g.~Proposition 3.2.5 of \cite{BGL14}.  
\end{remark}

Combining these observations give the following upper bound for \eqref{eq:sk-bound}:
\begin{proposition}\label{ulc-bound}
Let $X_1,\dots,X_n$ be centered independent random vectors in $\mathbb R^d$. 
Suppose that $X_i$ has a smooth, positive and $\eps$-uniformly log-concave density for all $i=1,\dots,n$ and some $\eps>0$. 
Then, for any $d\times d$ matrix $A$, $W:=n^{-1/2}\sum_{i=1}^nAX_i$ has a Stein kernel $\tau$ satisfying
\ben{\label{eq:rosenthal}
\|(\tau(W)^\top-I_d) u\|_p\leq \frac{C\norm{A}_{op}}{n}\left(\sqrt{p\sum_{i=1}^n\varpi(X_i)u^\top\Sigma_i u}
+\frac{p}{\eps}|A^\top u|\right)
+\left|\left(\frac{1}{n}\sum_{i=1}^n\Sigma_i-I_d\right)u\right|
}
for all $u\in\mathbb R^d$ and $p\geq1$, where $\Sigma_i=A\Cov(X_i)A^\top$.  
\end{proposition}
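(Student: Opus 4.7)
The plan is to construct a Stein kernel for $W$ as a conditional expectation of a random matrix assembled from the individual Stein kernels $\tau_i:=\tau_{\mcl L(X_i)}$ of the summands, and then bound its deviation from $I_d$ by a Rosenthal--Pinelis type inequality. Since each $X_i$ has a smooth, positive, $\eps$-uniformly log-concave density, \cref{thm:cfp} supplies $\tau_i$, \cref{key1} gives the pointwise bound $\norm{\tau_i(X_i)}_{op}\leq\eps^{-1}$ a.s., and \cref{key0} yields
\[
\E|\tau_i(X_i)^\top v|^2\leq\varpi(X_i)\,v^\top\Cov(X_i)v\qquad(v\in\mathbb R^d).
\]
A standard calculation that conditions on the other summands and applies \eqref{eq:sk} to each $X_i$ in turn then shows that the bounded random matrix
\[
\tilde\tau:=\frac1n\sum_{i=1}^n A\tau_i(X_i)A^\top
\]
satisfies $\E[W\cdot h(W)]=\E[\langle\tilde\tau,\nabla h(W)\rangle_{H.S.}]$ for every compactly supported smooth $h:\mathbb R^d\to\mathbb R^d$. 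Setting $\tau(W):=\E[\tilde\tau\mid W]$ therefore yields a Stein kernel of $W$ (measurable with respect to $W$, with $\norm{\tau(W)}_{op}\leq\norm{A}_{op}^2\eps^{-1}$ a.s.), and Jensen's inequality reduces the bound \eqref{eq:rosenthal} to the analogous one with $\tilde\tau$ in place of $\tau(W)$.

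Using $\E[\tau_i(X_i)]=\Cov(X_i)$ (taking affine $h$ in \eqref{eq:sk}), I would split
\[
(\tilde\tau^\top-I_d)u=\frac1n\sum_{i=1}^n\xi_i+\left(\frac1n\sum_{i=1}^n\Sigma_i-I_d\right)u,\qquad\xi_i:=(A\tau_i(X_i)^\top A^\top-\Sigma_i)u,
\]
where $\xi_1,\dots,\xi_n$ are independent and centred in $\mathbb R^d$. The deterministic term reproduces the last summand of \eqref{eq:rosenthal}. For the random sum I would invoke a vector-valued Rosenthal--Pinelis inequality of the form
\[
\Bigl\|\sum_{i=1}^n\xi_i\Bigr\|_p\leq C\bbklr{\sqrt p\bbklr{\sum_{i=1}^n\E|\xi_i|^2}^{1/2}+p\max_{1\leq i\leq n}\esssup|\xi_i|}
\]
and plug in the two moment inputs. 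The variance is controlled by \cref{key0} with $v=A^\top u$:
\[
\E|\xi_i|^2\leq\E|A\tau_i(X_i)^\top A^\top u|^2\leq\norm{A}_{op}^2\varpi(X_i)\,u^\top\Sigma_iu,
\]
and the almost sure bound is supplied by \cref{key1} (which also yields $\norm{\Cov(X_i)}_{op}\leq\eps^{-1}$):
\[
|\xi_i|\leq|A\tau_i(X_i)^\top A^\top u|+|\Sigma_iu|\leq 2\norm{A}_{op}\eps^{-1}|A^\top u|\quad\text{a.s.}
\]
Substituting and dividing by $n$ reproduces the first two terms of \eqref{eq:rosenthal}.

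The bookkeeping around the Stein identity for sums and the Rosenthal--Pinelis step are both routine; the actual content sits in the pair of properties of $\tau_i$ furnished by the Courtade--Fathi--Pananjady construction. The operator-norm bound \cref{key1} (which uses $\eps$-uniform log-concavity) makes the ``max'' term of Rosenthal--Pinelis finite and dimensionally clean, while \cref{key0} gives the ``variance'' term the sharp, genuinely dimension-free shape $u^\top\Sigma_i u$ rather than something like $\tr(\Sigma_i)|u|^2$. The fragile point is therefore whether these two bounds hold \emph{simultaneously} for one and the same Stein kernel: constructions that deliver only an entrywise or trace-level second-moment estimate in the general log-concave case would cost an extra factor of the dimension here.
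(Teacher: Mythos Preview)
Your proposal is correct and follows essentially the same route as the paper: construct $\tau(W)$ as the conditional expectation of $n^{-1}\sum_i A\tau_i(X_i)A^\top$ using the Courtade--Fathi--Pananjady kernels, split off the deterministic bias $(\frac1n\sum_i\Sigma_i-I_d)u$, and control the centred sum by a vector Rosenthal--Pinelis inequality with the variance input coming from \cref{key0} and the uniform bound from \cref{key1}. The paper uses the $\|\max_i|\xi_i|\|_p$ form of the Pinelis inequality rather than your $\esssup$ variant, but since the $\xi_i$ are a.s.\ bounded this is immaterial.
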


\begin{proof}
For every $i=1,\dots,n$, let $\tau_i$ be the Stein kernel for $X_i$ given by \cref{thm:cfp}. By \cref{key1}, $\|\tau_i(X_i)\|_{op}\leq\eps^{-1}$ a.s. Let
\[
\tau(w):=\frac{1}{n}\E\left[\sum_{i=1}^nA\tau_i(X_i)A^\top\mid W=w\right].
\]
It is straightforward to check that $\tau$ is a Stein kernel for $W$. 
We show that this $\tau$ satisfies \eqref{eq:rosenthal}. Thanks to Jensen's inequality, it suffices to prove \eqref{eq:rosenthal} when $p\geq2$. 
Since $\E\tau_i(X_i)=\Cov(X_i)$ for every $i$ by the definition of Stein kernel, we have
\[
\|(\tau(W)^\top-I_d) u\|_p
\leq\|(\tau(W)^\top-\E\tau(W)^\top) u\|_p+\left|\left(\frac{1}{n}\sum_{i=1}^n\Sigma_i-I_d\right)u\right|.
\]
By Jensen's inequality,
\ba{
\|(\tau(W)^\top-\E\tau(W)^\top) u\|_p
&\leq\frac{1}{n}\lnorm{\sum_{i=1}^nA\{\tau_i(X_i)^\top-\E\tau_i(X_i)^\top\}A^\top u}_p\\
&\leq\frac{\norm{A}_{op}}{n}\lnorm{\sum_{i=1}^n\{\tau_i(X_i)^\top-\E\tau_i(X_i)^\top\}A^\top u}_p.
}
Applying Rosenthal's inequality for random vectors in $\mathbb R^d$ (cf.~Eq.(4.2) in \cite{Pi94}), we obtain 
\ba{
&\|(\tau(W)^\top-\E\tau(W)^\top) u\|_p\\
&\leq \frac{C\norm{A}_{op}}{n}\left(\sqrt{p\sum_{i=1}^n\E|\{\tau_i(X_i)^\top-\E\tau_i(X_i)^\top\}A^\top u|^2}
+p\left\|\max_{1\leq i\leq n}|\{\tau_i(X_i)^\top-\E\tau_i(X_i)^\top\}A^\top u|\right\|_p\right)\\
&\leq \frac{C\norm{A}_{op}}{n}\left(\sqrt{p\sum_{i=1}^n4\varpi(X_i)\E|A^\top u\cdot X_i|^2}
+2p\eps^{-1}|A^\top u|\right),
}
where we used \cref{key0}, $\norm{\tau_i(X_i)}_{op}\leq\eps^{-1}$ a.s.~and Jensen's inequality to get the last line. Since $\E|A^\top u\cdot X_i|^2=(A^\top u)^\top\Cov(X_i)A^\top u=u^\top\Sigma_iu$, we complete the proof. 
\end{proof}

%
%
%

The results obtained so far will be used in the following form:
\begin{proposition}\label{prop:sk-wrap}
Let $A$ be a $d\times d$ matrix and $(f_y)_{y\in\mathbb R^d}$ be a family of smooth, positive and $\eps$-uniformly log-concave densities on $\mathbb R^d$ with some constant $\eps>0$. Suppose that the map $\mathbb R^d\times\mathbb R^d\ni(y,x)\mapsto f_y(x)\in[0,\infty)$ is measurable. 
For every $y\in\mathbb R^d$, let $\nu_{y}$ be the law of $\xi-\E\xi$ with $\xi$ a random vector in $\mathbb R^d$ having density $f_y$. 
Then, there exists a Markov kernel $\mcl Q$ from $(\mathbb R^d)^n$ to $\mathbb R^d\times\mathbb R^d$ satisfying the following conditions for any $\bs y=(y^{(i)})_{i=1}^n\in(\mathbb R^d)^n$:
\begin{enumerate}[label=(\roman*)]

\item $\mcl Q(\bs y,\cdot\times\mathbb R^d)$ equals the law of $An^{-1/2}\sum_{i=1}^n \xi_i$, where $\xi_1,\dots,\xi_n$ are independent random vectors in $\mathbb R^d$ such that $\mcl L(\xi_i)=\nu_{y^{(i)}}$ for all $i=1,\dots,n$.

\item $\mcl Q(\bs y,\mathbb R^d\times\cdot)$ is the $d$-dimensional standard normal distribution.

\item
If $W'$ and $Z'$ are random vectors in $\mathbb R^d$ such that $(W',Z')\sim\mcl Q(\bs y,\cdot)$, then
\ba{
\norm{u\cdot(W'-Z')}_p
&\leq \frac{C\|A\|_{op}}{n}\left(p\sqrt{\sum_{i=1}^n\varpi(\nu_{y^{(i)}})u^\top A\Cov(f_{y^{(i)}})A^\top u}
+\frac{p^{3/2}}{\eps}|A^\top u|\right)\\
&\qquad+C\sqrt p\left|\left(\frac{1}{n}\sum_{i=1}^nA\Cov(f_{y^{(i)}})A^\top-I_d\right)u\right|
}
for any $p\geq1$ and $u\in\mathbb R^d$. 

\end{enumerate}
\end{proposition}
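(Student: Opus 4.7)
The plan is to chain the three preceding results: apply \cref{ulc-bound} to obtain a Stein kernel $\tau$ for the scaled sum, use \cref{sk-bound} to control the Föllmer score integral from \cref{score-bound} by $C\sqrt p\,\|(\tau(W)^\top-I_d)u\|_p$, and then invoke \cref{score-bound} to furnish the coupling itself. Properties (i) and (ii) of $\mcl Q$ will come directly from condition (i) of \cref{score-bound} once we set $\mcl P(\bs y,\cdot)$ equal to the law of $W:=An^{-1/2}\sum_{i=1}^n\xi_i$ with $\xi_i\sim\nu_{y^{(i)}}$ independent.

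Concretely, since centering preserves smoothness, positivity, and the Hessian of $-\log f$, each $\xi_i$ has a smooth positive $\eps$-uniformly log-concave density with $\E\xi_i=0$ and $\Cov(\xi_i)=\Cov(f_{y^{(i)}})$. Thus \cref{ulc-bound} applied with $X_i=\xi_i$ yields a Stein kernel $\tau$ for $W$ satisfying \eqref{eq:rosenthal} with $\Sigma_i=A\Cov(f_{y^{(i)}})A^\top$. Assuming for the moment that $A$ is invertible, $\sum\xi_i$ has a smooth positive density by convolution and multiplication by $A$ preserves this, so $W$ has a smooth positive density $f^{\bs y}$ with respect to $N(0,I_d)$. \cref{score-bound} then produces the Markov kernel $\mcl Q$ with the claimed marginals and the score bound \eqref{eq:score-bound}. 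Combining this bound with \cref{sk-bound} and \eqref{eq:rosenthal}, and distributing the resulting $\sqrt p$ factor across the three terms on the right-hand side of \eqref{eq:rosenthal}, reproduces exactly the inequality asserted in condition (iii).

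Two loose ends remain. For singular $A$, the random vector $W$ has no density on $\mathbb R^d$ and \cref{score-bound} does not apply directly; I would replace $A$ with $A_\delta:=A+\delta I_d$ (invertible for all but finitely many $\delta>0$), carry out the construction for $A_\delta$, and then pass to a weak limit. All three terms on the right-hand side of (iii) are continuous in $A$, and uniform $p$-moment bounds ensure tightness of the couplings, so the limiting kernel satisfies the same bound. This step is essentially routine.

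The main obstacle I anticipate is the measurability of $\mcl Q$ in $\bs y$. The Föllmer construction from the proof of \cref{score-bound} must be realized on a single probability space so that $(Y_1,B^{\bs y}_1)$ and the Radon--Nikodym derivative $M^{\bs y}_1=f^{\bs y}(Y_1)$ depend jointly measurably on $(\bs y,\omega)$. Starting from the joint measurability hypothesis on $(y,x)\mapsto f_y(x)$, this propagates through the heat semigroup $P_{1-t}f^{\bs y}$ and its logarithmic gradient $\nabla\log(P_{1-t}f^{\bs y})$, and hence through the stochastic integral defining $B^{\bs y}$. Verifying this carefully requires Fubini's theorem together with smoothness of $f^{\bs y}$ jointly in $(\bs y,x)$---which follows from dominated convergence using log-concavity of each $f_{y^{(i)}}$ and the resulting Gaussian tail bounds---but once these measurable selection arguments are settled, the rest of the proof is just combining \eqref{eq:score-bound}, \eqref{eq:sk-bound}, and \eqref{eq:rosenthal}.
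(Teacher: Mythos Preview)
Your approach matches the paper's exactly: define $\mcl P(\bs y,\cdot)$ as the law of $An^{-1/2}\sum_{i=1}^n\xi_i$, apply \cref{score-bound} to obtain the kernel $\mcl Q$ and the inequality \eqref{eq:score-bound}, then combine with \cref{sk-bound} and \cref{ulc-bound} to get (iii). You are in fact more careful than the paper on the two loose ends you flag: the paper's proof does not address singular $A$ at all (and the proposition is only ever invoked with the invertible choices $A=I_d$ and $A=\Sigma_\eps^{-1/2}$, so your $A_\delta$ approximation, while correct, is not needed downstream), and it dispatches the measurability of $\mcl Q$ in $\bs y$ in a single sentence by appeal to Fubini inside the proof of \cref{score-bound}.
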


\begin{proof}
For every $\bs y=(y^{(i)})_{i=1}^n\in(\mathbb R^d)^n$ and every Borel set $E\subset\mathbb R^d$, define $\mcl P(\bs y,E)=P(An^{-1/2}\sum_{i=1}^n \xi_i\in E)$, where $\xi_1,\dots,\xi_n$ are the same as in condition (i). Then, $\mcl P$ defines a Markov kernel from $(\mathbb R^d)^n$ to $\mathbb R^d$. Applying \cref{score-bound} to $\mcl P$, we can construct a Markov kernel $\mcl Q$ from $(\mathbb R^d)^n$ to $\mathbb R^d\times\mathbb R^d$ satisfying conditions (i) and (ii) for all $\bs y\in(\mathbb R^d)^n$ and
\[
\|u\cdot(W'-Z')\|_p\leq\int_0^1 \frac{1}{\sqrt t}\|u\cdot\rho_{W'[t]}(W'[t])\|_pdt
\]
with $(W',Z')\sim\mcl Q(\bs y,\cdot)$ for any $p\geq1$ and $u\in\mathbb R^d$. 
By \cref{ulc-bound}, $W'$ has a Stein kernel $\tau$ satisfying
\ba{
&\|(\tau(W)^\top-I_d) u\|_p\\
&\leq \frac{C\norm{A}_{op}}{n}\left(\sqrt{p\sum_{i=1}^n\varpi(\nu_{y^{(i)}})u^\top A\Cov(f_{y^{(i)}})A^\top u}
+\frac{p}{\eps}|A^\top u|\right)
+\left|\left(\frac{1}{n}\sum_{i=1}^nA\Cov(f_{y^{(i)}})A^\top-I_d\right)u\right|.
}
Combining these bounds with \cref{sk-bound} shows that condition (iii) is satisfied. 
\end{proof}

\subsection{Proof of \cref{thm:ulc}}

First we prove the claim when $\mu$ has a smooth, positive density $f$. 
In this case, by Proposition \ref{prop:sk-wrap} with $A=I_d$ and $f_y\equiv f$, we can construct random vectors $W$ and $Z$ in $\mathbb R^d$ such that $W\overset{d}{=}n^{-1/2}\sum_{i=1}^nX_i$ with $X_i\overset{i.i.d.}{\sim}\mu$, $Z\sim N(0,I_d)$ and
\ba{
\|u\cdot(W-Z)\|_p\leq \frac{C|u|\sqrt p}{n}\left(\sqrt{pn\varpi(\mu)}
+\frac{p}{\eps}\right)
}
for all $u\in\mathbb R^d$ and $p\geq1$. By the Brascamp--Lieb inequality (see e.g.~Proposition 10.1 in \cite{SaWe14}), we have $\varpi(\mu)\leq\eps^{-1}$. Hence \eqref{eq:ulc} holds. 

For the general case, take a constant $a\in(0,1)$ arbitrarily, and let $\mu^a$ be the law of the random vector $\sqrt{1-a}X+\sqrt{a} G$, where $X\sim\mu$ and $G\sim N(0,I_d)$ are independent. Clearly, $\mu^a$ is isotropic and has a smooth, positive density. Also, by Theorem 3.7 in \cite{SaWe14}, $\mu^a$ is $\eps/(1-(1-\eps)a)$-uniformly log-concave. Hence we can construct random vectors $W^a$ and $Z^a$ in $\mathbb R^d$ such that $W^a\overset{d}{=}n^{-1/2}\sum_{i=1}^nX_i$ with $X_i\overset{i.i.d.}{\sim}\mu^a$, $Z^a\sim N(0,I_d)$ and
\be{
\|u\cdot(W^a-Z^a)\|_p\leq C|u|\left(\frac{p}{\sqrt{\eps n}}+\frac{p^{3/2}}{\eps n}\right)
}
for all $u\in\mathbb R^d$ and $p\geq1$. In particular, the family $\{(W^a,Z^a):a\in(0,1)\}$ is tight, so by Prohorov's theorem there exists a sequence $(a_k)_{k=1}^\infty$ of numbers in $(0,1)$ such that $a_k\to0$ and $(W^{a_k},Z^{a_k})$ converges in law to some pair $(W,Z)$ of random vectors in $\mathbb R^d$ as $k\to\infty$. It is clear that $W\overset{d}{=}n^{-1/2}\sum_{i=1}^nX_i$ with $X_i\overset{i.i.d.}{\sim}\mu$ and $Z\sim N(0,I_d)$. Also, by Theorem 3.4 in \cite{Bi99},
\be{
\|u\cdot(W-Z)\|_p\leq\liminf_{k\to\infty}\|u\cdot(W^{a_k}-Z^{a_k})\|_p\leq C|u|\left(\frac{p}{\sqrt{\eps n}}+\frac{p^{3/2}}{\eps n}\right)
}
for all $u\in\mathbb R^d$ and $p\geq1$. So $W$ and $Z$ are desired ones. \qed

\subsection{Stochastic localization}

A naive idea to use \cref{ulc-bound} for the proof of \cref{thm:lc} is to approximate a log-concave distribution by a uniformly log-concave one. To be precise, given a log-concave random vector $X$ in $\mathbb R^d$ and a positive constant $\eps$, we wish to construct an $\eps$-uniformly log-concave random vector $X^\eps$ such that $\norm{u\cdot(X-X^\eps)}_p=O(\eps)$ as $\eps\downarrow0$ for all $u\in\mathbb R^d$ and $p\geq2$. If this is possible, it is not difficult to see that we can obtain a bound of order $1/\sqrt n$ for \eqref{eq:sk-bound} via \cref{ulc-bound}. 
However, this approach seems hopeless because the currently best known bound for $\mcl W_p(X,X_\eps)$ is presumably the one given by Proposition 1 in \cite{DKRD21} and it is of order $\eps^{1/p}$. 
Instead, we take an alternative idea of using \textit{Eldan's stochastic localization} that enables us to express a log-concave distribution as a mixture of uniformly log-concave distributions and some ``nice'' distribution. The latter can be handled by the martingale embedding method developed in \cite{EMZ20}. Below we detail this strategy.  


Throughout this subsection, we assume $d\geq2$. 
Let $\mu$ be an isotropic log-concave probability measure on $\mathbb R^d$. 
Suppose that $\mu$ has a smooth, positive density $f$ with respect to $N(0,I_d)$. 
Consider the following SDE:
\ben{\label{eq:follmer2}
Y_0=0,\qquad
dY_t=\nabla \log (P_{1-t}f)(Y_t)dt+dB_t,\quad t\in[0,1],
}
where $B_t$ is a $d$-dimensional Brownian motion. 
By Theorem 2.1 in \cite{ElLe18}, this SDE has a weak solution $Y=(Y_t)_{t\in[0,1]}$ such that $Y_1\sim\mu$.  
Moreover, we can show that the solution to \eqref{eq:follmer2} is unique in law; see \cref{thm:unique} in \cref{sec:appendix}. 
$Y$ is known as the F\"ollmer process in the literature. 

For $t\in(0,1)$ and $y\in\mathbb R^d$, define a probability density function $f_{t,y}:\mathbb R^d\to[0,\infty)$ as
\[
f_{t,y}(x)=\frac{f(x)\phi_{tI_d}(x-y)}{P_{t}f(y)}=\frac{f(x)e^{-|x-y|^2/(2t)}}{\int_{\mathbb R^d}f(z)e^{-|z-y|^2/(2t)}dz},\quad x\in\mathbb R^d,
\]
where $\phi_{tI_d}$ is the (Lebesgue) density of $N(0,tI_d)$. 
For every $t\in(0,1)$, the conditional law of $Y_1$ given $\mcl F_t:=\sigma(Y_s:0\leq s\leq t)$ has density $f_{1-t,Y_t}$. 
In fact, by Eq.(21) in \cite{ElLe18}, there exists a probability measure $Q$ such that $Y$ is a $d$-dimensional standard Brownian motion under $Q$ and 
\[
P(Y_1\in A\mid\mcl F_t)=\E_Q\left[1_A(Y_1)\frac{f(Y_1)}{P_{1-t}f(Y_t)}\mid\mcl F_t\right]
\]
for every Borel set $A\subset\mathbb R^d$. Under $Q$, the conditional law of $Y_1$ given $\mcl F_t$ is $N(Y_t,(1-t)I_d)$. Consequently, 
\ba{
P(Y_1\in A\mid\mcl F_t)=\int_{\mathbb R^d}1_A(x)\frac{f(x)}{P_{1-t}f(Y_t)}\phi_{(1-t)I_d}(x-Y_t)dx
=\int_Af_{1-t,Y_t}(x)dx.
}
So the desired result follows. 
In particular, we have $\E[Y_1\mid\mcl F_t]=m(t,Y_t)$, where
\[
m(t,y):=\int_{\mathbb R^d}xf_{1-t,y}(x)dx.
\]

Since $\mu$ is log-concave, $\Hess(\log f)\preceq I_d$; hence $\Hess(\log f_{1-t,y})\preceq -\frac{t}{1-t}I_d$ for all $t\in(0,1)$ and $y\in\mathbb R^d$. This means that $f_{1-t,y}$ is $t/(1-t)$-uniformly log-concave. 
Thus, conditional on $\mcl F_t$, $Y_1-m(t,Y_t)$ is centered and $t/(1-t)$-uniformly log-concave, so the sum of its independent copies would be coupled with a suitable Gaussian vector by \cref{prop:sk-wrap}.
Therefore, if we can couple the sum of independent copies of $m_t:=m(t,Y_t)$ with a suitable Gaussian vector for a moderately small $t$, the proof of \cref{thm:lc} will be complete. 
In this section, we accomplish this by a version of the martingale embedding method of \cite{EMZ20}. 


Let $(Y^{(1)},B^{(1)}),\dots,(Y^{(n)},B^{(n)})$ be independent copies of $(Y,B)$. By definition, $Y^{(i)}$ satisfies the following SDE for every $i=1,\dots,n$:
\be{
Y^{(i)}_0=0,\qquad
dY^{(i)}_t=\nabla \log (P_{1-t}f)(Y^{(i)}_t)dt+dB^{(i)}_t,\quad t\in[0,1].
}
Define $m^{(i)}_t:=m(t,Y^{(i)}_t)$.

\begin{proposition}\label{me-bound}
There exists a universal constant $c_0\geq3$ such that, for any $0\leq \eps\leq(c_0\varpi_d\log (2d))^{-1}$, we can construct a random vector $Z_\eps\sim N(0,\Cov(m_\eps))$ on the same probability space where $Y^{(1)},\dots,Y^{(n)}$ are defined and such that
\ba{
\left\|u\cdot\left(\frac{1}{\sqrt n}\sum_{i=1}^nm^{(i)}_\eps-Z_\eps\right)\right\|_p
\leq C|u|\sqrt \eps\left(\frac{p}{\sqrt{n}}+\frac{p^{5/2}}{n}\right)
}
for all $u\in\mathbb R^d$ and $p\geq1$. 
\end{proposition}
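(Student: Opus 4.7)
The plan is to exploit the martingale structure of $m^{(i)}_t$ via an It\^o representation, and then couple the sum with a Gaussian using a martingale-embedding argument in the spirit of \cite{EMZ20}.

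First I would apply It\^o's formula to $m(t, Y_t)$ along the F\"ollmer SDE. Because $m_t=\IE[Y_1\mid\mcl F_t]$ is a continuous martingale starting at $m_0=\IE[Y_1]=0$, the drift term must vanish, leaving a purely stochastic representation $dm_t=\nabla_y m(t,Y_t)\,dB_t$. A direct computation from $\nabla_y\log f_{1-t,y}(x)=(x-m(t,y))/(1-t)$ identifies $\nabla_y m(t,y)=\Sigma(t,y)/(1-t)$, where $\Sigma(t,y):=\Cov(f_{1-t,y})$. Setting $\Gamma^{(i)}_s:=\Sigma(s,Y^{(i)}_s)/(1-s)$, this gives $m^{(i)}_\eps=\int_0^\eps\Gamma^{(i)}_s\,dB^{(i)}_s$, and stacking the $n$ copies yields
\[
S_\eps:=\frac{1}{\sqrt n}\sum_{i=1}^n m^{(i)}_\eps=\int_0^\eps\bar\Gamma_s\,d\bar B_s,
\]
where $\bar\Gamma_s$ is the $d\times nd$ block matrix $n^{-1/2}(\Gamma^{(1)}_s,\dots,\Gamma^{(n)}_s)$ and $\bar B:=(B^{(1)},\dots,B^{(n)})$ is an $nd$-dimensional Brownian motion.

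Next I would construct the Gaussian coupling by replacing $\bar\Gamma_s$ with a deterministic surrogate of matching second moment. Let $\Lambda_s$ denote the symmetric positive semidefinite square root of $\IE[(\Gamma^{(1)}_s)^2]$ and set $\hat A_s:=n^{-1/2}(\Lambda_s,\dots,\Lambda_s)$. Define
\[
Z_\eps:=\int_0^\eps\hat A_s\,d\bar B_s=\frac{1}{\sqrt n}\sum_{i=1}^n\int_0^\eps\Lambda_s\,dB^{(i)}_s,
\]
which is automatically defined on the same probability space as the $Y^{(i)}$'s. Since $\hat A_s$ is deterministic, $Z_\eps$ is centered Gaussian with covariance $\int_0^\eps\Lambda_s^2\,ds=\Cov(m_\eps)$, as required. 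The difference $u\cdot(S_\eps-Z_\eps)=n^{-1/2}\sum_i\int_0^\eps u^\top(\Gamma^{(i)}_s-\Lambda_s)\,dB^{(i)}_s$ is a continuous scalar martingale whose quadratic variation is $n^{-1}\sum_i T^{(i)}$ with $T^{(i)}:=\int_0^\eps u^\top(\Gamma^{(i)}_s-\Lambda_s)^2 u\,ds$ iid and nonnegative. Burkholder--Davis--Gundy combined with Rosenthal's inequality for the iid sum then reduces the bound to controlling $\IE[T]$ and $\|T\|_{p/2}$.

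The main obstacle is estimating the moments of $\Gamma_s-\Lambda_s$ for small $s$. At $s=0$ one has $Y_0=0$ and $\Sigma(0,0)=\Cov(\mu)=I_d$, hence $\Gamma_0=I_d$. Quantifying that $\Gamma_s$ remains close to $I_d$ on $[0,\eps]$ is exactly where Eldan's stochastic localization machinery enters: the differential inequality governing the evolution of $\Sigma(s,Y_s)$ along the F\"ollmer process, together with moment estimates on its operator norm, shows that $\|\Gamma_s-I_d\|_{op}$ admits uniform high-moment control as long as $\eps\varpi_d\log(2d)$ is sufficiently small. The threshold $\eps\leq(c_0\varpi_d\log(2d))^{-1}$ and the universal constant $c_0$ come precisely from the range of validity of these localization estimates. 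Propagating the resulting $L^p$ bounds through the BDG--Rosenthal estimate yields the $\sqrt\eps\,p/\sqrt n$ leading term (from the iid variance contribution) together with the $\sqrt\eps\,p^{5/2}/n$ correction (from the Rosenthal maximum), giving the stated inequality.
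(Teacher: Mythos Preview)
Your coupling is the right object but it is too coarse to deliver the $n^{-1/2}$ rate. With your choice $Z_\eps=n^{-1/2}\sum_i\int_0^\eps\Lambda_s\,dB^{(i)}_s$, the quadratic variation of $u\cdot(S_\eps-Z_\eps)$ equals $n^{-1}\sum_iT^{(i)}$ with $T^{(i)}=\int_0^\eps|(\Gamma^{(i)}_s-\Lambda_s)u|^2ds$ i.i.d.\ and \emph{nonnegative}. BDG therefore gives $\|u\cdot(S_\eps-Z_\eps)\|_p\le C\sqrt p\,\|n^{-1}\sum_iT^{(i)}\|_{p/2}^{1/2}$, and Rosenthal only controls $n^{-1}\sum_iT^{(i)}-\E T$; the mean $\E T=\int_0^\eps\E|(\Gamma_s-\Lambda_s)u|^2ds$ survives and is independent of $n$. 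The stochastic-localization estimates in force here (the lower bound $\E\Gamma_t\succeq I_d/3$ and the moment bound $\|\|\Gamma_s\|_{op}\|_p\le Cp$) do \emph{not} say that $\Gamma_s$ stays close to $I_d$ or to $\Lambda_s$; they only control moments of $\|\Gamma_s\|_{op}$. Hence $\E T$ is of order $\eps|u|^2$ in general (it vanishes only when $\Gamma_s$ is deterministic, i.e.\ $\mu$ Gaussian), and your argument yields at best $\|u\cdot(S_\eps-Z_\eps)\|_p\le C|u|\sqrt{p\eps}$, with no factor $1/\sqrt n$.

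The paper obtains the $1/\sqrt n$ by a different coupling. It first collapses the $nd$-dimensional driving noise to a single $d$-dimensional Brownian motion $\tilde B$ via $M_t=\int_0^t\bar\Gamma_s\,d\tilde B_s$ with $\bar\Gamma_s=(n^{-1}\sum_i(\Gamma^{(i)}_s)^2)^{1/2}$, and then sets $Z_\eps=\int_0^\eps\sqrt{\E[\Gamma_s^2]}\,U_s\,d\tilde B_s$ for an orthogonal $U_s$ chosen so that $\sqrt{\E[\Gamma_s^2]}\,U_s\bar\Gamma_s$ equals the matrix geometric mean $\E[\Gamma_s^2]\#\bar\Gamma_s^2$. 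The quadratic variation of the difference is then $\int_0^\eps u^\top\bigl(\bar\Gamma_s^2+\E[\Gamma_s^2]-2\,\E[\Gamma_s^2]\#\bar\Gamma_s^2\bigr)u\,ds$, and the elementary inequality $A+B-2A\#B\preceq(A-B)A^{-1}(A-B)$ bounds this by $\int_0^\eps|(\bar\Gamma_s^2-\E[\Gamma_s^2])u|^2ds$ up to constants (using $\E\Gamma_s^2\succeq cI_d$). The crucial gain is that $\bar\Gamma_s^2-\E[\Gamma_s^2]=n^{-1}\sum_i\{(\Gamma^{(i)}_s)^2-\E[(\Gamma_s)^2]\}$ is a \emph{centered} i.i.d.\ average, hence $O_p(n^{-1/2})$; this is precisely where the $p/\sqrt n$ and $p^{5/2}/n$ terms originate. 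Your block-diagonal coupling never produces a centered average in the integrand, which is why the rate is lost.
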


For the proof of \cref{me-bound}, we use the following stochastic integral representation of $m_t$:
\[
m_t=\int_0^t\Gamma_sdB_s,\quad\text{where}\quad\Gamma_t=\frac{\Cov(f_{1-t,Y_t})}{1-t}. 
\]
This representation is found in \cite[Section 4]{EMZ20}, but we can directly verify it using Ito's formula; see Lemma 13 in \cite{ELS20} for details. 

We collect some properties of $\Gamma_t$ necessary for our proof. The first one is a lower bound of $\E\Gamma_t$:
\begin{lemma}[\cite{ElMi20}, Corollary 3]\label{gamma-lb}
If $t\leq\frac{1}{2\varpi(\mu)+1}$, then $\E\Gamma_t\succeq I_d/3$.
\end{lemma}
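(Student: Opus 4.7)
The plan is to convert the lower bound on $\E\Gamma_t$ into an upper bound on $\Cov(m_t)$ and then control the latter through a Poincaré-type argument applied to the marginal law of $Y_t$, combined with a pointwise bound on $\Gamma_t$. Since $Y_1\sim\mu$ is isotropic and, conditionally on $\mcl F_t$, $Y_1$ has density $f_{1-t,Y_t}$ with mean $m_t$ and covariance $(1-t)\Gamma_t$, the law of total covariance gives the identity $I_d=\Cov(m_t)+(1-t)\,\E\Gamma_t$. Hence $\E\Gamma_t\succeq I_d/3$ is equivalent to $u^\top\Cov(m_t)u\le(2+t)/3$ for every unit vector $u$.

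The key observation is that under $P$ one has the joint representation $Y_t=tY_1+\sqrt{t(1-t)}\,G$ with $G\sim N(0,I_d)$ independent of $Y_1$; this follows from the Girsanov density $dP/dQ=f(Y_1)$, which does not alter the Gaussian conditional law of $Y_t$ given $Y_1$ obtained under $Q$. Consequently, the marginal density $p_t$ of $Y_t$ is the convolution of two independent log-concave laws, so $p_t$ is log-concave by Prékopa--Leindler, and the standard subadditivity of the Poincaré constant under convolution yields $\varpi(p_t)\le t^2\varpi(\mu)+t(1-t)$. Since $m_t=m(t,Y_t)$ is a smooth function of $Y_t$ with Jacobian $\Gamma_t$, applying the Poincaré inequality to $y\mapsto u\cdot m(t,y)$ gives $\Var(u\cdot m_t)\le\varpi(p_t)\,u^\top\E\Gamma_t^2\,u$. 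To close the loop I would invoke Brascamp--Lieb: log-concavity of $\mu$ implies $\Hess\log f\preceq I_d$, so the Lebesgue density $f_{1-t,y}$ is $t/(1-t)$-strongly log-concave, whence $\Cov(f_{1-t,y})\preceq\frac{1-t}{t}I_d$ almost surely, i.e.\ $\Gamma_t\preceq t^{-1}I_d$ a.s., and therefore $\Gamma_t^2\preceq t^{-1}\Gamma_t$.

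Combining these estimates yields $\Var(u\cdot m_t)\le(t\varpi(\mu)+1-t)\,u^\top\E\Gamma_t\,u$, and substituting the identity $u^\top\E\Gamma_t\,u=(1-u^\top\Cov(m_t)u)/(1-t)$ produces a scalar linear inequality in $u^\top\Cov(m_t)u$ that rearranges to
\[
u^\top\E\Gamma_t\,u\ge\frac{1}{2(1-t)+t\varpi(\mu)}.
\]
For $t\le 1/(2\varpi(\mu)+1)$, a quick case check (separating $\varpi(\mu)\le 2$ from $\varpi(\mu)>2$) gives $t(\varpi(\mu)-2)\le 1$, so the denominator is bounded by $3$ and $\E\Gamma_t\succeq I_d/3$ as required. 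The main obstacle is assembling the Poincaré subadditivity bound on $\varpi(p_t)$ and the pointwise Brascamp--Lieb control of $\Gamma_t$ into a single self-consistent inequality; once these two ingredients are in place, the closing algebra is routine.
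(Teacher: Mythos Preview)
Your argument is correct. The paper does not supply its own proof of this lemma but simply quotes it as Corollary~3 of \cite{ElMi20}. Your route---the total-covariance identity $I_d=\Cov(m_t)+(1-t)\E\Gamma_t$, the Poincar\'e inequality for the law of $Y_t$ (whose constant is at most $t^2\varpi(\mu)+t(1-t)$ by convolution subadditivity applied to $Y_t\overset{d}{=}tY_1+\sqrt{t(1-t)}\,G$), and the Brascamp--Lieb bound $\Gamma_t\preceq t^{-1}I_d$ to pass from $\E\Gamma_t^2$ to $\E\Gamma_t$---is precisely the mechanism behind the cited result. Each step checks out: the Jacobian identity $\nabla_y m(t,y)=\Gamma(t,y)$ is a direct computation, convolution subadditivity of the Poincar\'e constant follows from the variance decomposition plus Jensen, and the self-consistent scalar inequality indeed rearranges to $u^\top\E\Gamma_t u\geq\bigl(2(1-t)+t\varpi(\mu)\bigr)^{-1}$.

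One minor remark: your final bound actually gives the stronger admissible range $t(\varpi(\mu)-2)\leq1$, so the stated hypothesis $t\leq1/(2\varpi(\mu)+1)$ is merely a convenient sufficient condition (with room to spare) rather than the sharp threshold your computation delivers.
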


To get moment bounds for $\norm{\Gamma_t}_{op}$, we employ a recent result of \cite{KlLe22}. For this purpose, we relate our notation to theirs. 
For $t>0$ and $\theta\in\mathbb R^d$, we define a probability density function $p_{t,\theta}:\mathbb R^d\to[0,\infty)$ as
\[
p_{t,\theta}(x)=f_{1/(1+t),\theta/(1+t)}(x)=\frac{f(x)e^{-|x|^2/2}e^{\theta\cdot x-t|x|^2/2}}{\int_{\mathbb R^d}f(z)e^{-|z|^2/2}e^{\theta\cdot z-t|z|^2/2}dz},\quad x\in\mathbb R^d.
\]
Note that this is the same notation as in \cite{KlLe22} because $f$ is the density of $\mu$ with respect to $N(0,I_d)$. 
Then, as in \cite{KlLe22}, set
\[
a(t,\theta)=\int_{\mathbb R^d}xp_{t,\theta}(x)dx,\qquad
A(t,\theta)=\Cov(p_{t,\theta}).
\]
Also, define 
\ba{
\tilde B_t=\int_0^{t/(1+t)}\frac{1}{1-s}dB_s,\qquad
\theta_t=(1+t)Y_{t/(1+t)},\quad t\geq0.
}

\begin{lemma}\label{lem:theta}
$(\tilde B_t)_{t\geq0}$ is a standard Brownian motion in $\mathbb R^d$. Moreover, $(\theta_t)_{t\geq0}$ satisfies the following SDE:
\ben{\label{eq:follmer3}
d\theta_t=a(t,\theta_t)dt+d\tilde B_t,\qquad\theta_0=0.
}
In addition, the solution to \eqref{eq:follmer3} is unique in law.  
\end{lemma}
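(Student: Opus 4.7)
The plan is to treat all three claims via the deterministic time change $s(t) = t/(1+t)$, which satisfies $1-s(t) = 1/(1+t)$ and $s'(t) = (1-s(t))^2 = 1/(1+t)^2$. For the Brownian motion claim, $\tilde B_t$ is a continuous local martingale as a stochastic integral against $B$, and L\'evy's characterization applies once I compute
\[
\langle\tilde B^{(i)},\tilde B^{(j)}\rangle_t = \delta_{ij}\int_0^{s(t)}(1-u)^{-2}\,du = \delta_{ij}\bigl[(1-s(t))^{-1}-1\bigr] = \delta_{ij}\,t.
\]

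For the SDE claim, I would apply the It\^o product rule to $\theta_t = (1+t)\tilde Y_t$ with $\tilde Y_t := Y_{s(t)}$. The SDE \eqref{eq:follmer2} together with the deterministic time change gives $d\tilde Y_t = s'(t)\nabla\log(P_{1-s(t)}f)(\tilde Y_t)\,dt + dN_t$, where $N_t := B_{s(t)}$ is a continuous martingale with $\langle N\rangle_t = s(t)I_d$. Combining,
\[
d\theta_t = \tilde Y_t\,dt + (1+t)s'(t)\nabla\log(P_{1-s(t)}f)(\tilde Y_t)\,dt + (1+t)\,dN_t.
\]
The cancellation $(1+t)s'(t) = 1-s(t)$ collapses the drift to $\tilde Y_t + (1-s(t))\nabla\log(P_{1-s(t)}f)(\tilde Y_t)$. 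A straightforward Gaussian integration-by-parts applied to $P_{1-s}f(y) = \int f(x)\phi_{(1-s)I_d}(x-y)\,dx$ yields the identity $m(s,y) = y + (1-s)\nabla\log P_{1-s}f(y)$, so the drift becomes $m(s(t),\tilde Y_t)$. Since $p_{t,\theta}(x) = f_{1-s(t),\theta/(1+t)}(x)$ and $\theta_t/(1+t) = \tilde Y_t$, this is precisely $a(t,\theta_t)$. For the martingale part, $(1+t)\,dN_t = d\tilde B_t$ follows either by matching quadratic variations, $(1+t)^2\,d\langle N\rangle_t = (1+t)^2 s'(t)\,dt = dt$, or directly by the substitution $u = s(v)$ in the defining integral for $\tilde B_t$.

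For uniqueness in law, the time change $s$ sets up a deterministic bijection between weak solutions of \eqref{eq:follmer3} and those of \eqref{eq:follmer2}: given any $(\theta,\tilde B)$ solving \eqref{eq:follmer3}, the process $Y_u := (1-u)\theta_{u/(1-u)}$ together with the inverse stochastic-integral relation recovers a weak solution of \eqref{eq:follmer2}, whose law is unique by \cref{thm:unique}. This pins down the law of $(\theta_t)_{t \geq 0}$. The main subtlety throughout is the careful bookkeeping of the time change and the fortuitous algebraic identity $(1+t)s'(t) = 1-s(t)$ that makes the drift of $\theta_t$ collapse to $a(t,\theta_t)$; no novel ideas are needed beyond It\^o's formula and the Gaussian identity for $\nabla\log P_{1-s}f$.
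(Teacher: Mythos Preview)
Your proposal is correct and follows essentially the same route as the paper: L\'evy's characterization for the Brownian motion claim via the quadratic variation computation $\int_0^{s(t)}(1-u)^{-2}\,du=t$, the key Gaussian identity $\nabla\log P_{1-s}f(y)=(m(s,y)-y)/(1-s)$ for the drift, and the reverse time change $Y_u=(1-u)\theta_{u/(1-u)}$ to transport uniqueness in law from \eqref{eq:follmer2}. The only cosmetic difference is that the paper organizes the SDE computation via integration by parts on $Y_t/(1-t)$ and then assembles $\tilde B_t$ as a difference, whereas you apply the It\^o product rule directly to $(1+t)Y_{s(t)}$; both unpack the same cancellation $(1+t)s'(t)=1-s(t)$.
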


\begin{proof}
This fact is pointed out in \cite[Section 4.2]{KlPu21}. We give a formal proof for the sake of completeness. 
First, we can easily check that the quadratic covariation matrix process of $(\tilde B_t)_{t\geq0}$ is given by $[\tilde B,\tilde B]_t=tI_d$; hence, $(\tilde B_t)_{t\geq0}$ is a standard Brownian motion in $\mathbb R^d$ by L\'evy's characterization (see e.g.~Theorem 40 in \cite[Chapter II]{Pr05}). 
Next, by a direct calculation, we have for any $t\in(0,1)$ and $y\in\mathbb R^d$
\[
\nabla \log (P_{1-t}f)(y)=\frac{m(t,y)-y}{1-t}.
\]
Hence
\[
Y_t-B_t=\int_0^t\frac{m(s,Y_s)-Y_s}{1-s}ds.
\]
Also, integration by parts gives
\[
\frac{Y_t}{1-t}=\int_0^t\frac{1}{1-s}dY_s+\int_0^t\frac{Y_s}{(1-s)^2}ds.
\]
Consequently, for any $t\geq0$,
\ba{
\tilde B_t&=\int_0^{t/(1+t)}\frac{1}{1-s}dY_s-\int_0^{t/(1+t)}\frac{m(s,Y_s)-Y_s}{(1-s)^2}ds\\
&=\theta_t-\int_0^{t/(1+t)}\frac{m(s,Y_s)}{(1-s)^2}ds
=\theta_t-\int_0^{t}m(u/(1+u),Y_{u/(1+u)})du.
}
Since $a(u,\theta)=m(u/(1+u),\theta/(1+u))$ for $u>0$ and $\theta\in\mathbb R^d$ by definition, $(\theta_t)_{t\geq0}$ satisfies \eqref{eq:follmer3}.

Conversely, if $(\theta_t)_{t\geq0}$ is a solution to \eqref{eq:follmer3} with a standard Brownian motion $(\tilde B_t)_{t\geq0}$ in $\mathbb R^d$, in a similar manner to the above, we can verify that
\[
B_t=\int_0^{t/(1-t)}\frac{1}{1+s}d\tilde B_s,\qquad t\in[0,1),
\]
is a standard Brownian motion in $\mathbb R^d$ and $Y_t=(1-t)\theta_{t/(1-t)}$ satisfies \eqref{eq:follmer2} for $t\in[0,1)$. Hence uniqueness in law for \eqref{eq:follmer3} follows from that for \eqref{eq:follmer2}. 
\end{proof}

Thanks to \cref{lem:theta}, the process $(\theta_t)_{t\geq0}$ has the same law as the one defined in \cite{KlLe22}; see Eq.(18) ibidem. 

Let
\[
\kappa_d:=\sup_{\mu\in\LC_d}\left\|\int_{\mathbb R^d}x_1xx^\top\mu(dx)\right\|_{H.S.}.
\]
By Fact 6.1 in \cite{El13}, there exists a positive universal constant $C_0>0$ such that 
\ben{\label{kappa-bound}
\kappa_d^2\leq C_0\varpi_d.
}

\begin{lemma}\label{lem:kl}
There exist positive universal constants $C$ and $c$ such that
\[
\norm{\norm{\Gamma_s}_{op}}_p\leq Cp
\]
for any $0< s\leq(1/2)\wedge(c \kappa_d^2 \cdot\log d)^{-1}$. 
\end{lemma}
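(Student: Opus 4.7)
The plan is to transport the bound from $\Gamma_s$ onto the random matrix $A(t,\theta_t)$ appearing in the stochastic localization formulation of \cite{KlLe22}, using the change of variables $s=t/(1+t)$ given by \cref{lem:theta}, and then invoke the sub-exponential control of $\|A(t,\theta_t)\|_{op}$ that is the probabilistic core of their proof.

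The first step is a direct computation. From the definitions $p_{t,\theta}(x)=f_{1/(1+t),\theta/(1+t)}(x)$ and $\theta_t=(1+t)Y_{t/(1+t)}$, setting $s=t/(1+t)$ yields $1/(1+t)=1-s$ and $\theta_t/(1+t)=Y_s$, so $p_{t,\theta_t}=f_{1-s,Y_s}$ and hence
\[
A(t,\theta_t)=\Cov(p_{t,\theta_t})=\Cov(f_{1-s,Y_s})=(1-s)\Gamma_s.
\]
Under the assumption $s\leq 1/2$ we have $t=s/(1-s)\leq 1$ and $1+t\leq 2$, so
\[
\|\Gamma_s\|_{op}=(1+t)\|A(t,\theta_t)\|_{op}\leq 2\|A(t,\theta_t)\|_{op}.
\]
Moreover, $s\leq(c\kappa_d^2\log d)^{-1}$ translates (after adjusting the universal constant $c$) into $t$ lying in the small-time regime studied in \cite{KlLe22}.

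The second step is to invoke the key technical estimate from \cite{KlLe22}. Their stopping-time analysis of the SDE \eqref{eq:follmer3}, which by \cref{lem:theta} has exactly the law of the localization process they consider, yields a sub-exponential tail bound of the form
\[
P\bigl(\|A(t,\theta_t)\|_{op}>\lambda\bigr)\leq C_1 e^{-c_1\lambda},\qquad\lambda\geq C_2,
\]
with universal constants $C_1,c_1,C_2>0$, valid for every $t$ in the range $0<t\leq(c_0\kappa_d^2\log d)^{-1}$ and uniformly over $\mu\in\LC_d$. This is precisely the probabilistic ingredient that, together with the lower bound in \cref{gamma-lb}, allows them to bootstrap to the KLS estimate \eqref{kl-est}.

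The third step is the standard conversion of a sub-exponential tail into a moment bound: writing $\|\|\Gamma_s\|_{op}\|_p^p=\int_0^\infty p\lambda^{p-1}P(\|\Gamma_s\|_{op}>\lambda)\,d\lambda$ and using the tail bound inherited for $\|\Gamma_s\|_{op}$ via the inequality $\|\Gamma_s\|_{op}\leq 2\|A(t,\theta_t)\|_{op}$, one obtains $\|\|\Gamma_s\|_{op}\|_p\leq Cp$ for a universal constant $C$ and every $p\geq 1$.

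The main obstacle is to extract the sub-exponential tail estimate above from \cite{KlLe22}: the published statement there is the KLS bound \eqref{kl-est}, not the tail estimate itself, so one has to track their stopping-time and bootstrap arguments to confirm the precise form of the bound with threshold $(c\kappa_d^2\log d)^{-1}$ and with universal constants independent of $\mu\in\LC_d$. Once that estimate is in hand, the rest of the argument is routine.
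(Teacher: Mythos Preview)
Your proposal is correct and follows essentially the same route as the paper: relate $\Gamma_s$ to $A_t=A(t,\theta_t)$ via the time change $s=t/(1+t)$ of \cref{lem:theta}, then import the sub-exponential control on $\|A_t\|_{op}$ from \cite{KlLe22}. The paper cites this control directly as the moment bound $\E\|A_t\|_{op}^p\leq 2^p+C^pp!$ extracted from the proof of \cite[Corollary 5.4]{KlLe22}, which is exactly the equivalent formulation of the tail bound you propose to invoke, so no additional ``tracking'' of their stopping-time argument is needed.
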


\begin{proof}
Let $A_t=A(t,\theta_t)$. By the proof of \cite[Corollary 5.4]{KlLe22}, there exist positive universal constants $C$ and $c$ such that
\[
\E\norm{A_t}_{op}^p\leq 2^p+C^pp!
\]
for any $0<t\leq(c \kappa_d^2 \cdot\log d)^{-1}$. 
Next, recall that $A_t=\Cov(f_{1/(1+t),Y_{t/(1+t)}})$. Hence, for any $s\in(0,1)$, $A_{s/(1-s)}=\Cov(f_{1-s,Y_{s}})$. 
Therefore, for $0<s\leq(1/2)\wedge(2c\kappa_d\cdot\log d)^{-1}$,
\ba{
\E\norm{\Gamma_s}_{op}^p
=\frac{\E\norm{A_{s/(1-s)}}_{op}^p}{(1-s)^p}
\leq4^p+(2C)^pp!
\leq4^p+(2Cp)^p.
}
So we obtain the desired result. 
\end{proof}


We will also need the notion of matrix geometric mean. 
For two positive definite matrices $A$ and $B$, their geometric mean is defined as
\[
A\#B:=\sqrt A\sqrt{A^{-1/2}BA^{-1/2}}\sqrt A.
\]
$A\# B$ is evidently positive definite. Also, we have $A\# B=B\# A$ by Theorem 4.1.3 in \cite{Bh07}. 
The matrix geometric mean is useful because of the following lemma. 
\begin{lemma}\label{lem:mgm-est}
Let $A$ and $B$ be two $d\times d$ positive definite matrices. Then
\[
A+B-2A\#B\preceq (A-B)A^{-1}(A-B).
\]
\end{lemma}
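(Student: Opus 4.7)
The natural approach is to conjugate everything by $A^{-1/2}$ so that both sides of the desired inequality become polynomial expressions in the single symmetric matrix $X:=A^{-1/2}BA^{-1/2}\succ0$. Since $B=A^{1/2}XA^{1/2}$, the definition of the matrix geometric mean gives $A\#B=A^{1/2}X^{1/2}A^{1/2}$, so that
\[
A+B-2A\#B=A^{1/2}\bigl(I-2X^{1/2}+X\bigr)A^{1/2}=A^{1/2}(I-X^{1/2})^2A^{1/2}.
\]
On the right-hand side, writing $A-B=A^{1/2}(I-X)A^{1/2}$ and using $A^{1/2}A^{-1}A^{1/2}=I$, a direct computation gives
\[
(A-B)A^{-1}(A-B)=A^{1/2}(I-X)^2A^{1/2}.
\]
Thus the claim reduces, after removing the conjugation by $A^{1/2}$ (which preserves $\preceq$), to the scalar-looking inequality $(I-X^{1/2})^2\preceq(I-X)^2$.

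The key observation for the final step is that every matrix appearing here is a continuous function of the single symmetric matrix $X$, so they are simultaneously diagonalizable and in particular mutually commuting. Using the factorization $I-X=(I-X^{1/2})(I+X^{1/2})$ we therefore get $(I-X)^2=(I-X^{1/2})^2(I+X^{1/2})^2$, and hence
\[
(I-X)^2-(I-X^{1/2})^2=(I-X^{1/2})^2\bigl[(I+X^{1/2})^2-I\bigr].
\]
Both factors on the right are positive semidefinite — $(I-X^{1/2})^2$ as the square of a symmetric matrix, and $(I+X^{1/2})^2-I=2X^{1/2}+X$ as a nonnegative polynomial in $X^{1/2}\succeq0$ — and they commute, so their product is positive semidefinite. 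This yields $(I-X^{1/2})^2\preceq(I-X)^2$, which conjugating back by $A^{1/2}$ completes the proof. I do not anticipate any serious obstacle; the only point requiring care is the commutativity argument that legitimizes multiplying two positive semidefinite matrices to get a positive semidefinite one, which is valid precisely because everything is a function of $X$.
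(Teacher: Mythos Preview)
Your proof is correct and follows essentially the same route as the paper's: both conjugate by $A^{1/2}$ to reduce to the inequality $(I-X^{1/2})^2\preceq(I-X)^2$ for $X=A^{-1/2}BA^{-1/2}$, and then exploit that everything is a function of the single symmetric matrix $X$. The paper records this last step as a one-line $\preceq$ (implicitly using the spectral decomposition of $X$ and the scalar inequality $(1-\sqrt\lambda)^2\leq(1-\lambda)^2$), whereas you spell out the commutativity/factorization argument more explicitly, but the content is the same.
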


\begin{proof}
Using the definition of $A\# B$, we obtain
\ba{
A+B-2A\#B
&=\sqrt A\left(I_d+A^{-1/2}BA^{-1/2}-2\sqrt{A^{-1/2}BA^{-1/2}}\right)\sqrt A\\
&=\sqrt A\left(I_d-\sqrt{A^{-1/2}BA^{-1/2}}\right)^2\sqrt A\\
&\preceq\sqrt A\left(I_d-A^{-1/2}BA^{-1/2}\right)^2\sqrt A\\
&=(\sqrt A-BA^{-1/2})(\sqrt A-A^{-1/2}B)
=(A-B)A^{-1}(A-B).
}
\end{proof}

\begin{proof}[Proof of \cref{me-bound}]
Recall that we assume $d\geq2$, so $\log (2d)>1$. Also, note that $\varpi_d\geq1$ because $\varpi(N(0,I_d))=1$. 
Let $C_0$ and $c$ be the universal constants in \eqref{kappa-bound} and \cref{lem:kl}, respectively. Then, we take $c_0:=\max\{cC_0,3\}$. By construction, we have $c_0\varpi_d\log (2d)\geq\max\{c\kappa_d^2\log d,2\varpi_d+1\}$. 

For every $i=1,\dots,n$, we can write
\[
m^{(i)}_t=\int_0^t\Gamma^{(i)}_sdB^{(i)}_s,\quad\text{where}\quad\Gamma^{(i)}_t=\frac{\Cov\left(f_{1-t,Y^{(i)}_t}\right)}{1-t}. 
\]
Consider a continuous local martingale $M=(M_t)_{t\in[0,1]}$ in $\mathbb R^d$ defined as 
\[
M_t=\frac{1}{\sqrt n}\sum_{i=1}^nm^{(i)}_t
=\frac{1}{\sqrt n}\sum_{i=1}^n\int_0^t\Gamma_s^{(i)}dB^{(i)}_s,\qquad t\in[0,1].
\]
The quadratic covariation matrix process of $M$ is given by
\ba{
[M,M]_t=\frac{1}{n}\sum_{i=1}^n\int_0^t(\Gamma_s^{(i)})^2ds
=\int_0^t\bar\Gamma_s^2ds,
}
where
\[
\bar\Gamma_s:=\sqrt{\frac{1}{n}\sum_{i=1}^n(\Gamma_s^{(i)})^2}.
\]
Since $\bar\Gamma_s$ is invertible and satisfies $\bar\Gamma_s^{-1}\bar\Gamma_s^2\bar\Gamma_s^{-1}=I_d$, we can define a process $\tilde B=(\tilde B_t)_{t\in[0,1]}$ as
\[
\tilde B_t=\int_0^t\bar\Gamma_s^{-1}dM_s,\qquad t\in[0,1].
\]
We evidently have $M_t=\int_0^t\bar\Gamma_sd\tilde B_s$. 
Moreover, since $[\tilde B,\tilde B]_t=tI_d$ for all $t\in[0,1]$, $\tilde B$ is a standard Brownian motion in $\mathbb R^d$ by L\'evy's characterization. 

Next, since $0\preceq\E[(\Gamma_t-\E\Gamma_t)^2]=\E\Gamma_t^2-(\E\Gamma_t)^2$, we have by \cref{gamma-lb}
\ben{\label{eq:gamma-lb}
\E\Gamma_t^2\succeq\frac{1}{9}I_d\quad\text{for any}\quad 0\leq t\leq \eps.
}
In particular, $\E\Gamma_t^2$ is invertible for $0\leq t\leq \eps$. 
Define
\[
U_t=\sqrt{\E[\Gamma_t^2]^{-1/2}\bar\Gamma_t^2\E[\Gamma_t^2]^{-1/2}}\sqrt{\E[\Gamma_t^2]}\bar\Gamma_t^{-1}.
\] 
We can easily check that $U_t^\top U_t=U_tU_t^\top=I_d$ and $\E[\Gamma_t^2]\#\bar\Gamma_t^2=\sqrt{\E[\Gamma_t^2]}U_t\bar\Gamma_t$. 
In addition, we define a process $\hat B=(\hat B_t)_{t\in[0,\eps]}$ as
\[
\hat B_t=\int_0^tU_sd\tilde B_s,\qquad t\in[0,\eps].
\] 
We have for all $t\in[0,\eps]$
\[
[\hat B,\hat B]_t=\int_0^tU_sU_s^\top ds=tI_d.
\]
Therefore, by L\'evy's characterization again, $\hat B$ is a standard Brownian motion in $\mathbb R^d$. Hence
\[
Z_\eps:=\int_0^\eps\sqrt{\E[\Gamma_t^2]}d\hat B_t
=\int_0^\eps\sqrt{\E[\Gamma_t^2]}U_td\tilde B_t
\]
defines a centered Gaussian vector in $\mathbb R^d$ such that
\[
\Cov(Z_\eps)=\int_0^\eps\E[\Gamma_t^2]dt=\Cov(m_\eps).
\]

We are going to bound $\|u\cdot(M_\eps-Z_\eps)\|_p$. Thanks to Jensen's inequality, it suffices to consider the case $p\geq2$. Since
\[
u\cdot(M_\eps-Z_\eps)=\int_0^\eps u^\top(\bar\Gamma_t-\sqrt{\E[\Gamma_t^2]}U_t)d\tilde B_t,
\]
we obtain by Proposition 4.2 in \cite{BY82}
\ba{
&\|u\cdot(M_\eps-Z_\eps)\|_p\\
&\leq C\sqrt p\left\|\sqrt{\int_0^\eps u^\top(\bar\Gamma_t-\sqrt{\E[\Gamma_t^2]}U_t)(\bar\Gamma_t-U_t^\top\sqrt{\E[\Gamma_t^2]})udt}\right\|_p\\
&=C\sqrt p\left\|\int_0^\eps u^\top(\bar\Gamma_t^2+\E[\Gamma_t^2]-\sqrt{\E[\Gamma_t^2]}U_t\bar\Gamma_t-\Gamma_tU_t^\top\sqrt{\E[\Gamma_t^2]})udt\right\|_{p/2}^{1/2}.
}
By construction, $\sqrt{\E[\Gamma_t^2]}U_t\bar\Gamma_t=\E[\Gamma_t^2]\#\bar\Gamma_t^2$, which is a (random) symmetric matrix. Hence
\ben{\label{first-bound}
\|u\cdot(M_\eps-Z_\eps)\|_p
\leq C\sqrt p\left\|\int_0^\eps u^\top(\bar\Gamma_t^2+\E[\Gamma_t^2]-2\E[\Gamma_t^2]\#\bar\Gamma_t^2)udt\right\|_{p/2}^{1/2}.
}
Therefore, we obtain by \cref{lem:mgm-est}
\ba{
\|u\cdot(M_\eps-Z_\eps)\|_p
&\leq C\sqrt p\left\|\int_0^\eps u^\top(\bar\Gamma_t^2-\E[\Gamma_t^2])\E[\Gamma_t^2]^{-1}(\bar\Gamma_t^2-\E[\Gamma_t^2])udt\right\|_{p/2}^{1/2}.
}
Then, by \eqref{eq:gamma-lb},
\ba{
\|u\cdot(M_\eps-Z_\eps)\|_p
&\leq C\sqrt p\left\|\int_0^\eps |(\bar\Gamma_t^2-\E[\Gamma_t^2])u|^2dt\right\|_{p/2}^{1/2}.
}
By the integral Minkowski inequality, we obtain
\[
\|u\cdot(M_\eps-Z_\eps)\|_p
\leq C\sqrt{p\int_0^\eps \|(\bar\Gamma_t^2-\E[\Gamma_t^2])u\|_p^2dt}.
\]
To evaluate the integrand, observe that
\ba{
(\bar\Gamma_t^2-\E[\Gamma_t^2])u
&=\frac{1}{n}\sum_{i=1}^n\left((\Gamma_t^{(i)})^2u-\E[(\Gamma_t^{(i)})^2u]\right).
}
Hence, noting that sub-exponential tails are equivalent to linear growth of $L^r$-norms (cf.~\cite[Proposition 2.7.1]{Ve18}), we have by Lemma 2.1 in \cite{FaKo22} and \cref{lem:kl}
\ba{
\|(\bar\Gamma_t^2-\E[\Gamma_t^2])u\|_p
\leq \frac{C|u|}{n}(\sqrt{pn}+p^2)
=C|u|\left(\sqrt{\frac{p}{n}}+\frac{p^2}{n}\right).
}
Consequently, 
\[
\|u\cdot(M_\eps-Z_\eps)\|_p
\leq C|u|\sqrt \eps\left(\frac{p}{\sqrt{n}}+\frac{p^{5/2}}{n}\right).
\]
This completes the proof.
\end{proof}

\subsection{Proof of \cref{thm:lc}}

We will use the following simple fact about the Poincar\'e constant. 
\begin{lemma}\label{lem:lin-po}
Let $X$ be a random vector in $\mathbb R^d$. Then, for any $k\times d$ matrix $A$, $\varpi(AX)\leq\norm{A}_{op}^2\varpi(X)$.
\end{lemma}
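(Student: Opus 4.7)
The plan is the standard pullback argument for Poincaré inequalities. Fix a locally Lipschitz function $h:\mathbb R^k\to\mathbb R$ with $h\in L^2(\mcl L(AX))$, and define $g:\mathbb R^d\to\mathbb R$ by $g(x)=h(Ax)$. Since the image of any compact set under $A$ is compact and $h$ is locally Lipschitz, one checks that $g$ is locally Lipschitz on $\mathbb R^d$. Moreover $\int g^2\,d\mcl L(X)=\int h^2\,d\mcl L(AX)<\infty$, so $g\in L^2(\mcl L(X))$. I would then apply the Poincar\'e inequality for $X$ to $g$, giving
\[
\Var(h(AX))=\Var(g(X))\leq \varpi(X)\int |\nabla g|^2\,d\mcl L(X).
\]

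The key step is the pointwise inequality $|\nabla g(x)|\leq\|A\|_{op}\,|\nabla h(Ax)|$. For $y\ne x$ with $Ay\ne Ax$, the bound $|Ay-Ax|\leq\|A\|_{op}|y-x|$ yields
\[
\frac{|h(Ay)-h(Ax)|}{|y-x|}\leq \|A\|_{op}\,\frac{|h(Ay)-h(Ax)|}{|Ay-Ax|};
\]
when $Ay=Ax$ the left-hand quotient vanishes. Taking $\limsup_{y\to x}$ on the left and using that the right-hand side is a limsup along the restricted sequence $Ay\to Ax$ (hence bounded by $|\nabla h(Ax)|$), the desired pointwise inequality follows. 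Integrating against $\mcl L(X)$ and using the pushforward relation,
\[
\int|\nabla g|^2\,d\mcl L(X)\leq \|A\|_{op}^2\int |\nabla h(Ax)|^2\,\mcl L(X)(dx)=\|A\|_{op}^2\int|\nabla h|^2\,d\mcl L(AX).
\]
Combining with the Poincar\'e inequality above gives $\Var(h(AX))\leq \|A\|_{op}^2\varpi(X)\int|\nabla h|^2\,d\mcl L(AX)$, and taking the infimum over admissible $h$ in the defining inequality for $\varpi(AX)$ yields the claim.

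The only mildly delicate point is handling the limsup definition of $|\nabla h|$ rather than a classical gradient: we must ensure the restricted limsup along sequences in the affine subspace $Ax+\operatorname{Im}(A)$ does not exceed the full limsup defining $|\nabla h(Ax)|$, which is immediate since any subset of limiting sequences produces a smaller limsup. Everything else is bookkeeping. If $\|A\|_{op}=0$ then $A=0$, $AX$ is constant, and $\varpi(AX)=0$, consistent with the bound (the argument above never needs to divide by $\|A\|_{op}$).
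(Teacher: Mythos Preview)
Your proof is correct and follows essentially the same pullback argument as the paper: define $g(x)=h(Ax)$, verify $g$ is locally Lipschitz with $|\nabla g(x)|\leq\|A\|_{op}|\nabla h(Ax)|$, and apply the Poincar\'e inequality for $X$. You supply a bit more detail on the limsup comparison and the degenerate case $A=0$, but the approach is identical.
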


\begin{proof}
Fix a locally Lipschitz function $h:\mathbb R^k\to\mathbb R$ arbitrarily. Define a function $\tilde h:\mathbb R^d\to\mathbb R$ as $\tilde h(x)=h(Ax)$ for $x\in\mathbb R^d$. It is straightforward to check that $\tilde h$ is locally Lipschitz and satisfies $|\nabla\tilde h(x)|\leq \norm{A}_{op}|\nabla h(Ax)|$ for all $x\in\mathbb R^d$. Hence, we have $\Var[h(AX)]=\Var[\tilde h(X)]\leq\varpi(X)\norm{A}_{op}^2\E|\nabla h(AX)|^2$ by the definition of $\varpi(X)$. This implies the desired result. 
\end{proof}


\begin{proof}[Proof of \cref{thm:lc}]
We divide the proof into two steps.
\medskip

\noindent
\textbf{Step 1.} First we prove the result when $\Sigma=I_d$.
Since $\mu\times\mu$ is log-concave by Proposition 3.2 in \cite{SaWe14}, the result for $d=1$ follows from that for $d=2$. Hence we may assume $d\geq2$. 

By a similar argument as in the proof of \cref{thm:ulc}, we may assume that $\mu$ has a positive and smooth density. 
Let $c_0$ be the universal constant in \cref{me-bound}, and set $\eps:=(c_0\varpi_d\log (2d))^{-1}$ and $\Sigma_\eps:=\E\Cov(f_{1-\eps,Y_\eps})$. Note that $\Sigma_\eps\succeq(1-\eps)\E\Gamma_\eps\succeq\frac{2}{9}I_d$ by \cref{gamma-lb}. In particular, $\Sigma_\eps$ is invertible. 
Also, for every $y\in\mathbb R^d$, let $\nu_{\eps,y}$ be the law of $\xi-\E\xi$ with $\xi$ a random vector in $\mathbb R^d$ having density $f_{1-\eps,y}$. 
Then, by Proposition \ref{prop:sk-wrap} with $A=\Sigma_\eps^{-1/2}$ and $f_y=f_{1-\eps,y}$, there exists a Markov kernel $\mcl Q$ from $(\mathbb R^d)^n$ to $\mathbb R^d\times\mathbb R^d$ satisfying the following conditions for any $\bs y=(y^{(i)})_{i=1}^n\in(\mathbb R^d)^n$:
\begin{enumerate}[label=(\roman*)]

\item $\mcl Q(\bs y,\cdot\times\mathbb R^d)$ equals the law of $\Sigma_\eps^{-1/2}n^{-1/2}\sum_{i=1}^n \xi_i$, where $\xi_1,\dots,\xi_n$ are independent random vectors in $\mathbb R^d$ such that $\mcl L(\xi_i)=\nu_{\eps,y^{(i)}}$ for all $i=1,\dots,n$.

\item $\mcl Q(\bs y,\mathbb R^d\times\cdot)$ is the $d$-dimensional standard normal distribution.

\item\label{sk-bound-y} If $W'$ and $Z'$ are random vectors in $\mathbb R^d$ such that $(W',Z')\sim\mcl Q(\bs y,\cdot)$, then
\ba{
&\norm{u\cdot(W'-Z')}_p\\
&\leq \frac{C\|\Sigma_\eps^{-1/2}\|_{op}}{n}\left(p\sqrt{\sum_{i=1}^n\varpi(\nu_{\eps,y^{(i)}})u^\top\Sigma_\eps^{-1/2}\Cov(f_{1-\eps,y^{(i)}})\Sigma_\eps^{-1/2}u}
+\frac{p^{3/2}}{\eps}|\Sigma_\eps^{-1/2}u|\right)\\
&\qquad+\sqrt p\left|\left(\frac{1}{n}\sum_{i=1}^n\Sigma_\eps^{-1/2}\Cov(f_{1-\eps,y^{(i)}})\Sigma_\eps^{-1/2}-I_d\right)u\right|
}
for any $p\geq1$ and $u\in\mathbb R^d$.

\end{enumerate}
Note that $\varpi(\nu_{\eps,y})\leq\varpi_d\norm{\Cov(f_{1-\eps,y})}_{op}$ for any $y\in\mathbb R^d$ by \cref{lem:lin-po}.
We will use the bound in property \ref{sk-bound-y} after applying this inequality. 
Next, let $Z_\eps$ be as in \cref{me-bound}. 
Define a probability distribution $\Pi$ on $\mathbb R^d\times\mathbb R^d$ as
\ba{
\Pi(A)=\E\left[\int_{\mathbb R^d\times\mathbb R^d} 1_A\left(\frac{1}{\sqrt n}\sum_{i=1}^nm^{(i)}_\eps+\sqrt{\Sigma_\eps}w,Z_\eps+\sqrt{\Sigma_\eps}z\right)\mcl Q((Y^{(1)}_\eps,\dots,Y^{(n)}_\eps),dwdz)\right].
}
Take random vectors $W$ and $Z$ in $\mathbb R^d$ such that $(W,Z)\sim\Pi$. Below we show that these $W$ and $Z$ are desired ones. 

Recall that $Y^{(1)},\dots,Y^{(n)}$ are independent copies of $Y$, where $Y$ is defined as \eqref{eq:follmer2}. 
Moreover, recall that $Y_1\sim\mu$, $m_\eps=\E[Y_1\mid\mcl F_\eps]$ and the conditional law of $Y_1$ given $\mcl F_\eps$ has density $f_{1-\eps,Y_\eps}$. Hence $W\overset{d}{=}n^{-1/2}\sum_{i=1}^nX_i$ with $X_i\overset{i.i.d.}{\sim}\mu$ by construction. 
Also, it is straightforward to see that $Z\sim N(0,\Cov(Z_\eps)+\Sigma_\eps)$. 
Then, since $\E(Y_1-m_\eps)m_\eps^\top=\E[\E[Y_1-m_\eps\mid\mcl F_\eps]m_\eps^\top]=0$, we have
\ba{
I_d=\E Y_1Y_1^\top
=\E(Y_1-m_\eps)(Y_1-m_\eps)^\top+\E m_\eps m_\eps^\top.
}
By construction, we have $\E m_\eps m_\eps^\top=\Cov(m_\eps)=\Cov(Z_\eps)$. Also, 
\ba{
\E(Y_1-m_\eps)(Y_1-m_\eps)^\top
=\E[\Cov(Y_1\mid\mcl F_\eps)]
=\E\Cov(f_{1-\eps,Y_\eps})=\Sigma_\eps.
}
Therefore, $I_d=\Sigma_\eps+\Cov(Z_\eps)$, so $Z\sim N(0,I_d)$. 
Finally, for any $p\geq1$ and $u\in\mathbb R^d$,
\ba{
&\norm{u\cdot(W-Z)}_p\\
&=\left(\E\left[\int_{\mathbb R^d\times\mathbb R^d} \left|u\cdot\left(\frac{1}{\sqrt n}\sum_{i=1}^nm^{(i)}_\eps+\sqrt{\Sigma_\eps}w-\left(Z_\eps+\sqrt{\Sigma_\eps}z\right)\right)\right|^p\mcl Q((Y^{(1)}_\eps,\dots,Y^{(n)}_\eps),dwdz)\right]\right)^{1/p}\\
&\leq\left\|u\cdot\left(\frac{1}{\sqrt n}\sum_{i=1}^nm^{(i)}_\eps-Z_\eps\right)\right\|_p
+\left(\E\left[\int_{\mathbb R^d\times\mathbb R^d} \left|\sqrt{\Sigma_\eps}u\cdot\left(w-z\right)\right|^p\mcl Q((Y^{(1)}_\eps,\dots,Y^{(n)}_\eps),dwdz)\right]\right)^{1/p}\\
&\leq C|u|\sqrt \eps\left(\frac{p}{\sqrt{n}}+\frac{p^{5/2}}{n}\right)
+\frac{C\|\Sigma_\eps^{-1/2}\|_{op}|u|}{n}\left(p\sqrt{\varpi_d}\left\|\sqrt{\sum_{i=1}^n\norm{\Cov(f_{1-\eps,Y_\eps^{(i)}})}_{op}^2}\right\|_p
+\frac{p^{3/2}}{\eps}\right)\\
&\quad+\sqrt p\left\|\left(\frac{1}{n}\sum_{i=1}^n\Sigma_\eps^{-1/2}\Cov(f_{1-\eps,Y_\eps^{(i)}})-\sqrt{\Sigma_\eps}\right)u\right\|_p,
}
where the last line follows by \cref{me-bound} and property \ref{sk-bound-y}. 
Now, recall that $f_{1-\eps,y}$ is $\eps/(1-\eps)$-uniformly log-concave. Hence, by the Brascamp--Lieb inequality, 
\[
\norm{\Cov(f_{1-\eps,Y_\eps^{(i)}})}_{op}\leq\frac{1}{\eps}.
\]
Then, by Theorem 15.10 in \cite{BLM13} and \cref{lem:kl},
\ba{
\left\|\sqrt{\sum_{i=1}^n\norm{\Cov(f_{1-\eps,Y_\eps^{(i)}})}_{op}^2}\right\|_p
\leq C\sqrt{n+\frac{p}{\eps}}.
}
Also, by Lemma 2.1 in \cite{FaKo22} and \cref{lem:kl}, 
\ba{
\left\|\left(\frac{1}{n}\sum_{i=1}^n\Cov(f_{1-\eps,Y_\eps^{(i)}})-\Sigma_\eps\right)u\right\|_p
\leq \frac{C|u|}{n}(\sqrt{pn}+p).
}
Moreover, $\|\Sigma_\eps^{-1/2}\|_{op}\leq C$ since $\Sigma_\eps\succeq\frac{2}{9}I_d$. Consequently, 
\ba{
\norm{u\cdot(W-Z)}_p
&\leq \frac{C|u|}{\sqrt{\varpi_d\log (2d)}}\frac{p^{5/2}}{n}
+C|u|\left(\sqrt{\varpi_d}\frac{p}{\sqrt n}+\varpi_d\log (2d)\frac{p^{3/2}}{n}\right).
}
This gives \eqref{lc:prowass}.
\medskip

%
%
%
%



\noindent
\textbf{Step 2.} Next we consider the general case. 
Let $\Sigma=U^\top\Lambda U$ be a a spectral decomposition of $\Sigma$, where $U$ is a $d\times d$ orthogonal matrix and $\Lambda$ is a $d\times d$ diagonal matrix. Without loss of generality, we may assume that the first $r$ diagonal entries $\lambda_1,\dots,\lambda_r$ of $\Lambda$ are positive and others are zero. 

Let $X\sim\mu$. Since $\Cov(UX)=\Lambda$, $(UX)_j=0$ a.s.~for $j=r+1,\dots,d$. Let $\nu$ be the law of the random vector $((UX)_1/\sqrt{\lambda_1},\dots,(UX)_r/\sqrt{\lambda_r})^\top$ in $\mathbb R^r$. By construction, $\nu$ is isotropic. Also, $\nu$ is log-concave by Proposition 3.1 in \cite{SaWe14}. Thus, we can construct random vectors $W_0$ and $Z_0$ in $\mathbb R^r$ such that $W_0\overset{d}{=}n^{-1/2}\sum_{i=1}^nY_i$ with $Y_i\overset{i.i.d.}{\sim}\nu$, $Z_0\sim N(0,I_r)$ and
\be{
\|v\cdot(W_0-Z_0)\|_p\leq C|v|\left(\sqrt{\varpi_r}\frac{p}{\sqrt n}
+\varpi_r\log (2r)\frac{p^{3/2}}{n}
+\frac{1}{\sqrt{\varpi_r\log (2r)}}\frac{p^{5/2}}{n}\right)
}
for all $v\in\mathbb R^r$ and $p\geq1$. 
Now, define an $r\times d$ matrix as $S=(\diag(\sqrt{\lambda_1},\dots,\sqrt{\lambda_r})~O_{r,d-r})$, where $O_{r,d-r}$ denotes the $r\times(d-r)$ zero matrix. 
Then, set $\tilde Y_i=S^\top Y_i$ for $i=1,\dots,n$. By construction, $\tilde Y_i$ has the same law as $UX$. Hence, $X_i:=U^\top \tilde Y_i\sim\mu$. Let $W=n^{-1/2}\sum_{i=1}^nX_i$. Also, set $Z=U^\top S^\top Z_0$. We have $Z\sim N(0,\Sigma)$ by construction. Finally, for any $u\in\mathbb R^d$ and $p\geq1$,
\ba{
\norm{u\cdot(W-Z)}_p
&=\norm{SUu\cdot(W_0-Z_0)}_p\\
&\leq C|SUu|\left(\sqrt{\varpi_r}\frac{p}{\sqrt n}
+\varpi_r\log (2r)\frac{p^{3/2}}{n}
+\frac{1}{\sqrt{\varpi_r\log (2r)}}\frac{p^{5/2}}{n}\right).
}
Since $|SUu|^2=u^\top U^\top S^\top SUu=u^\top U^\top\Lambda Uu=u^\top\Sigma u=|\sqrt{\Sigma}u|^2$, we complete the proof. 
\end{proof}

\section{Proof of \cref{md-max}}\label{sec:md-max}

We write $\phi$ and $\Phi$ for the density and distribution function of $N(0,1)$, respectively. 
The proof relies on the following lemma.
\begin{lemma}\label{lem:gmax-tail}
Let $Z$ be as in \cref{md-max}. Then
\[
P\left(x-\eps<\max_{1\leq j\leq d}Z_j\leq x\right)\leq\frac{\eps}{\ul\sigma}(1+x/\ul\sigma)\exp(\eps x/\ul\sigma^2)P\left(\max_{1\leq j\leq d}Z_j>x\right)
\]
for any $x\geq0$ and $\eps>0$.
\end{lemma}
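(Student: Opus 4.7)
The strategy is to decompose $P(x-\eps<M\leq x)$ according to the argmax index $J=\arg\max_j Z_j$ and reduce to a sum of one-dimensional Gaussian Mills-ratio estimates. After a small perturbation $\Sigma\leadsto\Sigma+\delta I_d$ (taking $\delta\downarrow 0$ at the end) we may assume $\Sigma$ is nonsingular, so that $J$ is a.s.\ unique. Writing $f_{Z_j}$ for the $N(0,\sigma_j^2)$ density and $h_j(y):=P(Z_k\leq y\,\forall k\neq j\mid Z_j=y)$, we have
\begin{align*}
P(x-\eps<M\leq x)&=\sum_{j=1}^d\int_{x-\eps}^x f_{Z_j}(y)h_j(y)\,dy,\\
P(M>x)&=\sum_{j=1}^d\int_x^\infty f_{Z_j}(y)h_j(y)\,dy.
\end{align*}
Since $\sigma\mapsto(\eps/\sigma)(1+x/\sigma)e^{\eps x/\sigma^2}$ is decreasing on $(0,\infty)$ for $x,\eps\geq 0$, it will suffice to prove for each $j$ the per-$j$ inequality
\[
\int_{x-\eps}^x f_{Z_j}(y)h_j(y)\,dy\;\leq\;\frac{\eps}{\sigma_j}\Bigl(1+\frac{x}{\sigma_j}\Bigr)\,e^{\eps x/\sigma_j^2}\int_x^\infty f_{Z_j}(y)h_j(y)\,dy;
\]
summing and using $\sigma_j\geq\ul\sigma$ then yields the lemma.

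For the per-$j$ inequality, my plan is to combine three ingredients: (i) the Gaussian density ratio $f_{Z_j}(x-\eps)\leq f_{Z_j}(x)\,e^{\eps x/\sigma_j^2}$, after absorbing the nonpositive $-\eps^2/(2\sigma_j^2)$ correction; (ii) log-concavity of $h_j$ in $y$, obtained from Pr\'ekopa's theorem after writing $h_j(y)=\int\mathbf{1}_{\{w_k<(1-\mu_{jk})y\,\forall k\neq j\}}\,\phi_{\tilde\Sigma_j}(w)\,dw$ with $\mu_{jk}:=\Sigma_{jk}/\sigma_j^2$ and $\tilde\Sigma_j$ the conditional covariance of $(Z_k)_{k\neq j}$ given $Z_j$, the indicator being the characteristic function of a convex subset of $\mathbb R\times\mathbb R^{d-1}$ and $\phi_{\tilde\Sigma_j}$ being log-concave; and (iii) the standard Mills bound $1-\Phi(t)\geq\phi(t)/(1+t)$ for $t\geq 0$. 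Combined with log-concavity of $g_j:=f_{Z_j}h_j$, these let me compare the slab and tail integrals through the non-decreasing hazard rate $g_j/G_j$ (where $G_j(y):=\int_y^\infty g_j$): substituting $y=t-\eps$ on the left transfers the integration onto $[x,x+\eps]\subset[x,\infty)$, (i) supplies the $e^{\eps x/\sigma_j^2}$ factor, log-concavity of $g_j$ controls the slippage $h_j(t-\eps)$ vs.\ $h_j(t)$, and (iii) produces the $(1+x/\sigma_j)/\sigma_j$ factor.

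The main obstacle is the possible non-monotonicity of $h_j$: when $\Sigma_{kj}>\sigma_j^2$ for some $k$ (strong positive correlation with a larger-variance component, giving $1-\mu_{jk}<0$), $h_j$ eventually decreases in $y$. This precludes a naive replacement of $h_j(t-\eps)$ by $h_j(t)$, so one must route through the log-concavity of $g_j$ (rather than of $h_j$ alone) to justify comparing slab and tail integrals while keeping the sharp single-Gaussian Mills constant $(1+x/\sigma_j)/\sigma_j$ needed to match the lemma's right-hand side after passage to the worst component $\sigma_j=\ul\sigma$.
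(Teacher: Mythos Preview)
Your reduction to a per-$j$ inequality is where the argument breaks down: the inequality
\[
\int_{x-\eps}^x f_{Z_j}(y)h_j(y)\,dy\;\leq\;\frac{\eps}{\sigma_j}\Bigl(1+\frac{x}{\sigma_j}\Bigr)\,e^{\eps x/\sigma_j^2}\int_x^\infty f_{Z_j}(y)h_j(y)\,dy
\]
is \emph{false} in general. Take $d=2$, $\sigma_1=1$, $\sigma_2=4$, $\Corr(Z_1,Z_2)=1/2$. Then $\mu_{12}=\Sigma_{12}/\sigma_1^2=2>1$, so $h_1(y)=\Phi\bigl(-y/(2\sqrt{3})\bigr)$ is strictly decreasing. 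For large $y$ one has $g_1(y)=\phi(y)h_1(y)\asymp y^{-1}e^{-13y^2/24}$, so the hazard rate satisfies $g_1(x)/G_1(x)\sim\tfrac{13}{12}x$, which exceeds the Birnbaum bound $(1+x)/\sigma_1=1+x$. Hence already in the limit $\eps\downarrow0$ the per-$j$ inequality fails for large $x$. Log-concavity of $g_j$ gives only that its hazard rate is non-decreasing; it does \emph{not} cap the hazard rate at the Gaussian value $(1+x/\sigma_j)/\sigma_j$, and when $h_j$ is decreasing the hazard rate of $g_j$ strictly exceeds that of $f_{Z_j}$. Your ``route through the log-concavity of $g_j$'' cannot produce the sharp Mills constant you need.

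The paper's proof avoids this by first standardising each coordinate to unit variance via $\tilde Z_j=(Z_j-x)/\sigma_j+x/\ul\sigma$. After this step all regression coefficients equal correlations, hence are at most $1$, which forces the analogue of $h_j$ to be non-decreasing. The paper then works with the density of $\max_j\tilde Z_j$ directly (not the argmax decomposition): citing \cite{CCK15}, it writes this density as $\phi(z)G(z)$ with $G$ non-decreasing, and the monotonicity of $G$ is exactly what replaces your failed per-$j$ step. The remaining ingredients (Gaussian density ratio, Birnbaum) are the same as yours. If you want to rescue the argmax decomposition, you would need to standardise first and then prove the per-$j$ inequality with $\ul\sigma$ throughout; but at that point you are essentially reproducing the paper's argument.
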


\begin{proof}
The proof is a modification of that of Theorem 3 in \cite{CCK15}. Without loss of generality, we may assume that the correlation coefficient between $Z_j$ and $Z_k$ is less than 1 whenever $j\neq k$. 
Set $\tilde Z_j:=(Z_j-x)/\sigma_j+x/\ul\sigma$ for $j=1,\dots,d$. Then
\ba{
P\left(x-\eps<\max_{1\leq j\leq d}Z_j\leq x\right)
&=P\left(\left\{\bigcup_{j=1}^d\{Z_j>x-\eps\}\right\}\cap\left\{\bigcap_{j=1}^d\{Z_j\leq x\}\right\}\right)\\
&=P\left(\left\{\bigcup_{j=1}^d\{\tilde Z_j>x/\ul\sigma-\eps/\sigma_j\}\right\}\cap\left\{\bigcap_{j=1}^d\{\tilde Z_j\leq x/\ul\sigma\}\right\}\right)\\
&\leq P\left(\frac{x-\eps}{\ul\sigma}<\max_{1\leq j\leq d}\tilde Z_j\leq \frac{x}{\ul\sigma}\right).
}
Since $\tilde Z_j\sim N(\E\tilde Z_j,1)$ and $\E\tilde Z_j=-x/\sigma_j+x/\ul\sigma\geq0$ for all $j$, by Lemma 5 in \cite{CCK15}, $\max_{1\leq j\leq d}\tilde Z_j$ has density of the form $f(z)=\phi(z)G(z)$, where $G$ is non-decreasing by Lemma 6 in \cite{CCK15}. Then, for any $z\in\mathbb R$,
\ba{
\int_z^\infty\phi(u)duG(z)
&\leq\int_z^\infty\phi(u)G(u)du
=P\left(\max_{1\leq j\leq d}\tilde Z_j>z\right).
}
Hence
\ba{
\sup_{z\in[(x-\eps)/\ul\sigma,x/\ul\sigma]}f(z)&\leq\phi((x-\eps)_+/\ul\sigma)G(x/\ul\sigma)\leq\frac{\phi((x-\eps)_+/\ul\sigma)}{1-\Phi(x/\ul\sigma)}P\left(\max_{1\leq j\leq d}\tilde Z_j>x/\ul\sigma\right).
}
By Birnbaum's inequality, we have for all $z\geq0$
\ben{\label{024}
\frac{\phi(z)}{1-\Phi(z)}\leq\frac{2}{\sqrt{4+z^2}-z}=\frac{\sqrt{4+z^2}+z}{2}\leq1+z.
}
Hence
\ba{
\frac{\phi((x-\eps)_+/\ul\sigma)}{1-\Phi(x/\ul\sigma)}
\leq(1+x/\ul\sigma)\frac{\phi((x-\eps)_+/\ul\sigma)}{\phi(x/\ul\sigma)}
\leq(1+x/\ul\sigma)\exp(\eps x/\ul\sigma^2).
}
Also,
\ba{
P\left(\max_{1\leq j\leq d}\tilde Z_j>x/\ul\sigma\right)
=P\left(\bigcup_{j=1}^d\{\tilde Z_j>x/\ul\sigma\}\right)
=P\left(\max_{1\leq j\leq d}Z_j>x\right).
}
Hence we obtain the desired result. 
\end{proof}

\begin{proof}[Proof of \cref{md-max}]
In this proof, we use $C$ to denote positive constants, which depend only on $\alpha,A$ and $B$ and may be different in different expressions. 
First we prove the claim when $\Delta/\ul\sigma<1/e$. 
Set 
\be{
p=\log d+\log(\ul\sigma/\Delta)+\frac{x^2}{\ol\sigma^2}, \quad \eps=Ap^{\alpha} \Delta e.
} 
Because $\log d+|\log (\Delta/\ul\sigma)|\leq p_0/2$ and $x\leq \ol\sigma\sqrt{p_0/2}$, we have $p\leq p_0$. 

We have
\ba{
P\left(\max_jW_j>x\right)
&\leq P\left(\max_jZ_j>x-\eps\right)+P\left(\left|\max_jW_j-\max_jZ_j\right|>\eps\right)\\
&\leq P\left(\max_jZ_j>x\right)+P\left(x-\eps<\max_jZ_j\leq x\right)+P\left(\left|\max_jW_j-\max_jZ_j\right|>\eps\right).
}
By Markov's inequality and assumption,
\ba{
P\left(\left|\max_jW_j-\max_jZ_j\right|>\eps\right)
&\leq\eps^{-p}\E\left|\max_jW_j-\max_jZ_j\right|^p\\
&\leq\eps^{-p}\E\max_j|W_j-Z_j|^p\\
&\leq \eps^{-p}d(Ap^\alpha\Delta)^p
=de^{-p}=\frac{\Delta}{\ul\sigma} e^{-\frac{x^2}{\ol\sigma^2}}.
}
Also, by \cref{lem:gmax-tail},
\ba{
P\left(x-\eps<\max_jZ_j\leq x\right)
\leq \frac{\eps}{\ul\sigma}(1+x/\ul\sigma)\exp(\eps x/\ul\sigma^2)P\left(\max_jZ_j>x\right).
}
Hence
\ba{
P\left(\max_jW_j>x\right)
&\leq P\left(\max_jZ_j>x\right)+\frac{\eps}{\ul\sigma}(1+x/\ul\sigma)\exp(\eps x/\ul\sigma^2)P\left(\max_{j}Z_j>x\right)
+\frac{\Delta}{\ul\sigma} e^{-\frac{x^2}{\ol\sigma^2}}.
}
Similarly, we deduce
\ba{
&P\left(\max_jZ_j>x\right)\\
&=P\left(\max_jZ_j>x+\eps\right)+P\left(x<\max_jZ_j\leq x+\eps\right)\\
&\leq P\left(\max_jW_j>x\right)+P\left(\left|\max_jW_j-\max_jZ_j\right|>\eps\right)+P\left(x<\max_jZ_j\leq x+\eps\right)\\
&\leq P\left(\max_jW_j>x\right)+\frac{\eps}{\ul\sigma}(1+(x+\eps)/\ul\sigma)\exp(\eps (x+\eps)/\ul\sigma^2)P\left(\max_{j}Z_j>x+\eps\right)
+\frac{\Delta}{\ul\sigma} e^{-\frac{x^2}{\ol\sigma^2}}.
}
Consequently, we obtain
\ba{
\left|P\left(\max_jW_j>x\right)-P\left(\max_jZ_j>x\right)\right|
\leq \frac{\eps}{\ul\sigma}(1+(x+\eps)/\ul\sigma)\exp(\eps (x+\eps)/\ul\sigma^2)P\left(\max_{j}Z_j>x\right)
+\frac{\Delta}{\ul\sigma} e^{-\frac{x^2}{\ol\sigma^2}}.
}
Since $x\leq \ul\sigma(\ul\sigma/\Delta)^{1/(2\alpha+1)}$ and $\log d\leq C(\ul\sigma/\Delta)^{2/(2\alpha+1)}$, we have
\besn{\label{eps-est-max}
\eps
&\leq C\Delta((\log d)^{\alpha}+\{\log(\ul\sigma/\Delta)\}^{\alpha}+x^{2\alpha}/\ol\sigma^{2\alpha})\\
&\leq C\Delta(\{\log(\ul\sigma/\Delta)\}^{\alpha}+(\ul\sigma/\Delta)^{2\alpha/(2\alpha+1)})
\leq C\ul\sigma(\Delta/\ul\sigma)^{1/(2\alpha+1)}.
}
Hence $\exp(\eps (x+\eps)/\ul\sigma^2)\leq C$ and $\eps/\ul\sigma\leq C$. 
Also, letting $J$ be an element of $\{1,\dots,d\}$ satisfying $\ol\sigma^2=\E Z_J^2$, we have
\ben{\label{lb-gtail}
P\left(\max_jZ_j>x\right)\geq P\left(Z_J>x\right)=1-\Phi(x/\ol\sigma)\geq\frac{\phi(x/\ol\sigma)}{1+x/\ol\sigma}.
}
Hence
\ba{
&\left|P\left(\max_jW_j>x\right)-P\left(\max_jZ_j>x\right)\right|\\
&\leq C\frac{\eps}{\ul\sigma}(1+x/\ul\sigma)P\left(\max_jZ_j>x\right)
+\sqrt{2\pi}e^{-\frac{x^2}{2\ol\sigma^2}}(1+x/\ol\sigma)\frac{\Delta}{\ul\sigma} P\left(\max_jZ_j>x\right)\\
&\leq \frac{C}{\ul\sigma}\{\eps(1+x/\ul\sigma)+\Delta\}P\left(\max_jZ_j>x\right).
}
This completes the proof of \eqref{eq:mdp-max}. 

It remains to prove \eqref{eq:mdp-max} when $\Delta/\ul\sigma>1/e$. In this case, we have $x/\ul\sigma\leq C$. Thus, by \eqref{lb-gtail},
\[
\frac{1}{P(\max_jZ_j>x)}\leq C.
\]
So \eqref{eq:mdp-max} holds because $\Delta/\ul\sigma>1/e$.
\end{proof}

\begin{appendix}
\section{Uniqueness in law for \eqref{eq:follmer2}}\label{sec:appendix}

\begin{theorem}\label{thm:unique}
Uniqueness in law holds for the SDE \eqref{eq:follmer2} when $f$ is the smooth, positive density with respect to $N(0,I_d)$ of a log-concave probability measure $\mu$. 
\end{theorem}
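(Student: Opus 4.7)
The plan is to use Girsanov's theorem to identify the law of any weak solution of \eqref{eq:follmer2} with Wiener measure weighted by an explicit functional, thereby showing that the law is uniquely determined. Fix any weak solution $(Y, B, P)$ on $[0, 1]$, and write $b(t, y) := \nabla \log P_{1-t}f(y)$ for the F\"ollmer drift.

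First I would identify the Girsanov exponential in closed form. Applying It\^o's formula to $\log P_{1-t}f(Y_t)$ and invoking the backward heat equation $\partial_t P_{1-t}f + \tfrac{1}{2}\Delta P_{1-t}f = 0$, the relation $\nabla \log P_{1-t}f = b$, and the normalization $P_1 f(0) = 1$, one finds
\[
Z_t := \exp\left(-\int_0^t b(s, Y_s) \cdot dB_s - \tfrac{1}{2}\int_0^t |b(s, Y_s)|^2\, ds\right) = \frac{1}{P_{1-t}f(Y_t)},\qquad t \in [0, 1),
\]
so $Z$ is a positive $P$-local martingale with an explicit representation. Next, for each $\eps \in (0, 1)$, I would show that $Z$ is a true $P$-martingale on $[0, 1-\eps]$. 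Localizing at $\tau_n := \inf\{t : |Y_t| \geq n\} \wedge (1-\eps)$ makes $Z^{\tau_n}$ bounded, so $\E_P[Z_{\tau_n}] = 1$, and path continuity of $Y$ yields $\tau_n \uparrow 1-\eps$ $P$-a.s.~and hence $Z_{\tau_n} \to Z_{1-\eps}$ $P$-a.s.; the work lies in promoting this to $L^1(P)$-convergence. Here log-concavity of $\mu$ is decisive: the Brascamp--Lieb inequality applied to the $t/(1-t)$-uniformly log-concave densities $f_{1-t, y}$ gives $\|\Cov(f_{1-t, y})\|_{op} \leq (1-t)/t$, whence linear-growth bounds on $b(t, \cdot)$ on each compact $[\delta, 1-\eps]$; combined with Borell-type exponential concentration for $\sup_{t \leq 1-\eps}|Y_t|$, this yields the uniform integrability required to pass to the limit.

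With the martingale property in hand, Girsanov's theorem gives that under $d\tilde Q := Z_{1-\eps}\, dP$ on $\mcl F_{1-\eps}$ the process $(Y_t)_{t \leq 1-\eps}$ is a standard Brownian motion starting from the origin. Reversing the change of measure, the $P$-law of $(Y_t)_{t \leq 1-\eps}$ equals Wiener measure on $C([0, 1-\eps], \mathbb R^d)$ weighted by $Z_{1-\eps}^{-1} = P_\eps f(W_{1-\eps})$, an expression depending only on $f$ and $\eps$. Since this density is independent of the particular weak solution, uniqueness in law holds on $C([0, 1-\eps], \mathbb R^d)$ for every $\eps > 0$; sample-path continuity together with $Y_1 = \lim_{t \uparrow 1} Y_t$ then extends uniqueness to $C([0, 1], \mathbb R^d)$.

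The hard part will be verifying that $Z$ is a true (not merely local) $P$-martingale on $[0, 1-\eps]$: standard criteria such as Novikov's or Kazamaki's conditions do not apply transparently, since $1/P_{1-t}f(y)$ is not controlled a priori for large $|y|$. The log-concavity hypothesis on $\mu$ is strong enough to yield both the Brascamp--Lieb drift estimates and exponential tail bounds on $Y_t$, but carefully combining these to pass from $\E_P[Z_{\tau_n}] = 1$ to $\E_P[Z_{1-\eps}] = 1$ is the technical crux of the argument.
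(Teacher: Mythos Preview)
Your strategy is Girsanov-based, as is the paper's, but the paper packages the argument differently and avoids the step you flag as the ``technical crux.'' Rather than proving directly that $Z$ is a true martingale on $[0,1-\eps]$, the paper invokes Proposition~3.10 in Chapter~5 of Karatzas--Shreve: uniqueness in law for $dY_t=b(t,Y_t)\,dt+dB_t$ follows once one checks that every weak solution satisfies $\int_0^{t_1}|b(t,Y_t)|^2\,dt<\infty$ a.s.\ for each $t_1<1$. This condition is verified by elementary local bounds on $b$: for $t$ bounded away from $0$ and $|y|$ bounded, $|b(t,y)|$ is bounded by compactness and the explicit integral representation; for small $t$ and $|y|$ small (say $|y|\le a/4$), the exponential decay $\rho(x)\le e^{-a|x|+b}$ of the log-concave density (Saumard--Wellner) gives a uniform bound. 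Path continuity of $Y$ then yields the a.s.\ finiteness. No uniform integrability or concentration argument is needed.

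Your proposed route has a real gap at the UI step. First, the Brascamp--Lieb inequality bounds $\|\Cov(f_{1-t,y})\|_{op}\le(1-t)/t$, i.e.\ the \emph{covariance} of $f_{1-t,y}$; it does not by itself give linear growth of the drift $b(t,y)=(m(t,y)-y)/(1-t)$, since control of the mean $m(t,y)$ in $y$ requires a separate argument. Second, invoking ``Borell-type exponential concentration for $\sup_{t\le1-\eps}|Y_t|$'' under $P$ is circular: the law of $Y$ under $P$ is precisely what you are trying to determine, so you cannot appeal to Gaussian-type concentration for it a priori. Your explicit formula $Z_t=1/P_{1-t}f(Y_t)$ is a nice observation, but to turn it into $\E_P[Z_{1-\eps}]=1$ you would still need either an a priori bound on the law of $Y_{1-\eps}$ (circular) or the drift-integral condition above---at which point you may as well cite Karatzas--Shreve directly, as the paper does.
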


\begin{proof}
Let $Y^{(i)}=(Y^{(i)}_t)_{t\in[0,1]}$, $i=1,2$, be weak solutions to \eqref{eq:follmer2}. Since both $Y^{(1)}$ and $Y^{(2)}$ are continuous, it suffices to show that $(Y^{(1)}_t)_{t\in[0,t_1]}$ and $(Y^{(2)}_t)_{t\in[0,t_1]}$ have the same law for any fixed $\frac{1}{2}<t_1<1$. By Proposition 3.10 in \cite[Chapter 5]{KaSh98}, this follows once we show that
\[
\int_0^{t_1}|\nabla\log P_{1-t}f(Y^{(i)}_t)|^2dt<\infty\quad\text{a.s.}\quad\text{for }i=1,2.
\]
Set $\rho=f\phi_{I_d}$ so that $\rho$ is the Lebesgue density of $\mu$. By Theorem 5.1 in \cite{SaWe14}, there exist constants $a,b>0$ such that $\rho(x)\leq e^{-a|x|+b}$ for all $x\in\mathbb R^d$. 
Also, observe that 
\[
\nabla\log P_{1-t}f(y)=\frac{\nabla P_{1-t}f(y)}{P_{1-t}f(y)}=\frac{\int_{\mathbb R^d}\frac{x-y}{1-t}\rho(x)e^{\frac{2x\cdot y-t|x|^2}{2(1-t)}}dx}{\int_{\mathbb R^d}\rho(x)e^{\frac{2x\cdot y-t|x|^2}{2(1-t)}}dx}\quad\text{for }t\in(0,1),y\in\mathbb R^d.
\]
By this expression, for any $\tau\in(0,t_1)$ and $K>0$, we have 
\ben{\label{score-large-t}
\sup_{t\in[\tau,t_1],|y|\leq K}|\nabla\log P_{1-t}f(y)|<\infty.
}
Moreover, if $\tau\leq1/2$,
\ben{\label{score-small-t}
\sup_{t\in[0,\tau],|y|\leq a/4}|\nabla\log P_{1-t}f(y)|
\leq\frac{\int_{\mathbb R^d}\frac{|x|+|a|/4}{1-\tau}e^{-\frac{a}{2}|x|+b}dx}{\int_{\mathbb R^d}\rho(x)e^{\frac{-a|x|/2-\tau|x|^2}{2(1-\tau)}}dx}<\infty.
}
Now, since $Y^{(i)}_t\to Y^{(i)}_0=0$ a.s. as $t\downarrow0$, there is a random variable $t_0\in(0,1/2)$ such that $\sup_{0\leq t\leq t_0}|Y^{(i)}_t|\leq a/4$. Then, noting $\sup_{t\in[0,1]}|Y_t^{(i)}|<\infty$, we have $\sup_{t\in[t_0,t_1]}|\nabla\log P_{1-t}f(Y^{(i)}_t)|<\infty$ a.s.~by \eqref{score-large-t} and $\sup_{t\in[0,t_0]}|\nabla\log P_{1-t}f(Y^{(i)}_t)|<\infty$ a.s.~by \eqref{score-small-t}. Consequently, we obtain
\[
\int_0^{t_1}|\nabla\log P_{1-t}f(Y^{(i)}_t)|^2dt\leq\sup_{t\in[0,t_1]}|\nabla\log P_{1-t}f(Y^{(i)}_t)|^2<\infty\quad\text{a.s.}
\]
This completes the proof. 
\end{proof}
\end{appendix}

\section*{Acknowledgements}

The authors would like to thank two anonymous referees for their careful reading of the manuscript, many constructive comments, and indication of errors that were contained in the original version of the manuscript. 
Fang X. was partially supported by Hong Kong RGC GRF 14302418, 14305821, a CUHK direct grant and a CUHK start-up grant. Koike Y. was partly supported by JST CREST Grant Number JPMJCR2115 and JSPS KAKENHI Grant Numbers JP19K13668, JP22H00834, JP22H01139.


\end{document}